\newtheorem{theorem}{Theorem}[]
\newtheorem{lemma}[theorem]{Lemma}
\newtheorem{proposition}[theorem]{Proposition}
\newtheorem{remark}[theorem]{Remark}
\def \Cm {\mathbb{C}}
\def \Imm {\mathbb{I}}
\def \Km {\mathbb{K}}
\def \Rm {\mathbb{R}}
\def \Zm {\mathbb{Z}}
\def\D{\mathcal{D}}
\def\I{\mathcal{I}}
\def\O{\mathcal{O}}
\def\V{\mathcal{V}}
\newcommand{\Vol}{ \text{Vol }}
\newcommand{\where}{\quad\text{ where }}
\newcommand{\tr}{ {\text{tr }}}
\newcommand{\cout}[1]{}
\newcommand{\x}{\mathrm{x}}
\newcommand{\y}{\mathrm{y}}
\newcommand{\sgn}[1]{\,{\rm sign}(#1)}
\newcommand{\dprod}[2]{\left\langle{#1},{#2}\right\rangle}
\newcommand{\rotpi}{a}
\newcommand{\Aadj}{-A^*}
\title{The geodesic X-ray transform with a $GL(n,\mathbb{C})$-connection}
\author{Fran\c{c}ois Monard\thanks{Department of Mathematics, University of California, Santa Cruz, CA 95064. fmonard@ucsc.edu} \and Gabriel P. Paternain\thanks{Department of Pure Mathematics and Mathematical Statistics, University of Cambridge, Cambridge, CB3 0WB, UK. g.p.paternain@dpmms.cam.ac.uk}}
\date{}
\begin{document}
\maketitle

\begin{abstract}
    We derive reconstruction formulas for a family of geodesic ray transforms with connection, defined on simple Riemannian surfaces. Such formulas provide injectivity of all such transforms in a neighbourhood of constant curvature metrics and non-unitary connections with curvature close to zero. If certain Fredholm equations are injective in the absence of connection, then for any smooth enough connection multiplied by a complex parameter, the corresponding transform is injective for all values of that parameter outside a discrete set. Range characterizations are also provided.
\end{abstract}


\section{Introduction}

This paper is concerned with the attenuated X-ray transform on a non-trapping surface. We shall consider attenuations determined by a $n\times n$ matrix $A$ of complex-valued 1-forms (a $GL(n,\mathbb{C})$-connection).

Consider $(M,g)$ a compact oriented Riemannian surface with smooth boundary. We let $SM = \{ (\x,v)\in TM:\ |v|_g = 1 \}$ be the unit tangent bundle with geodesic flow $\varphi_t:SM\to SM$, defined on the domain
\begin{align}
    \D := \{ (\x,v,t):\quad (\x,v)\in SM, \quad t\in [-\tau(\x,-v), \tau(\x,v)] \},
    \label{eq:D}
\end{align}
where $\tau(\x,v)$ is the first time at which the geodesic $\gamma_{(\x,v)}$ with initial conditions $(\x,v)$ hits the boundary $\partial M$.  Recall that $\varphi_{t}$ is defined as $\varphi_{t}(\x,v):=(\gamma_{(\x,v)}(t),\dot{\gamma}_{(\x,v)}(t))$, with infinitesimal generator $X_{(\x,v)} = \frac{d\varphi_t}{dt}(\x,v)|_{t=0}$.

The manifold is said to be {\it non-trapping} if $\tau(\x,v)<\infty$ for all $(\x,v)\in SM$. In this paper we consider non-trapping surfaces where $\partial M$ is strictly convex, meaning that the second fundamental form of $\partial M\subset M$ is positive definite.
This is already enough to imply that $M$ is a disk (cf. \cite[Proposition 2.4]{PSU13}). If in addition $(M,g)$ has no conjugate points we say that the surface is {\it simple}.

Given $A$, consider the matrix weight $w:SM\to GL(n,\mathbb{C})$ that arises as a solution
of the transport equation on $SM$:
\begin{equation}\label{transport 001}
Xw=wA,\quad w|_{\partial_{+}(SM)}=\mbox{\rm id},
\end{equation}
where $\partial_{+}(SM)$ denotes the set of $(\x,v)\in\partial (SM)$ such that $v$ points inside $M$, i.e. $\langle \nu(\x),v\rangle\leq 0$ where $\nu$ is the outer unit normal at $\partial M$.
We define the attenuated X-ray transform associated with the connection $A$, 
$$I_{A}:C^{\infty}(SM,\mathbb{C}^{n}) \to C^{\infty}(\partial_{+}(SM),\mathbb{C}^{n})$$
as:
\begin{equation}
I_{A}(f)(\x,v):=\int_{0}^{\tau(\x,v)}w(\varphi_{t}(\x,v))f(\varphi_{t}(\x,v))\,dt.
\label{eq:defI}
\end{equation}
If $f\in C^{\infty}(M,\mathbb{C}^{n})$ we shall set $I_{A,0}(f):=I_{A}(f\circ \pi)$ where $\pi:SM\to M$ is the footpoint projection.

A question of fundamental importance in the subject, is whether $I_{A,0}$ is injective.
In \cite{Paternain2012}, the authors prove that in the case of a unitary connection and a Higgs field, the corresponding ray transform on a simple surface is injective. In \cite{Paternain2013a}, the same authors provide a range characterization for the attenuated ray transform. 
The salient feature here is that the connection need not be {\em unitary} (or, equivalently, {\em Hermitian}) in the sense that $A \ne \Aadj$.

To put the X-ray transform \eqref{eq:defI} into perspective, consider first a general matrix weight $w:SM\to GL(n,\mathbb{C})$. For each fixed $\x\in M$, the quantity $w^{-1}Xw(\x,v)$ may be expanded in the velocities $v$ as
\begin{equation}
w^{-1}Xw(\x,v)=\Phi(\x)+A(\x,v)+\text{\rm higher\;order\;terms\;in\;} v.
\label{eq:expansion}
\end{equation}
Hence the transport equation \eqref{transport 001} tells that the X-ray transform with connection picks up precisely the term in the expansion above that gives linear dependence in velocities.
If we were to pick just $\Phi(\x)$ we would have a Higgs field or potential and for $n=1$ and $g$ flat, this reduces to the usual attenuated ray transform that is so prominent in SPECT (single photon emission computed tomography). Inversion formulas and range characterization for this very important case were obtained in \cite{ABK,Noa1,Noa2}. Even for $n=1$, if the weight $w$ is allowed to be arbitrary, the attenuated ray transform in 2D may not be injective \cite{Bo}, but one may speculate that if the expansion in \eqref{eq:expansion} is finite, injectivity may persist.
In dimensions $\geq 3$, the game changes and very general injectivity results have been obtained in \cite{Paternain2016}.

Besides the motivation coming from medical imaging and SPECT, there is another reason for considering the problem of injectivity of $I_{A,0}$ and it has to do with the {\it non-linear} inverse problem of recovering $A$ from its scattering data, or non-abelian X-ray transform. It would be impossible to do justice to the literature on the topic here, but we refer to \cite{E,No,Paternain2012, Paternain2016} and references therein; the last two references also have a discussion about a pseudo-linearization procedure that allows to connect the linear and non-linear problems. 

Our approach in order to invert such transforms explicitely is a generalization of inversion formulas derived in \cite{Pestov2004} and further analyzed in \cite{Krishnan2010} for geodesic ray transforms on simple surfaces. The first author then provided generalizations of such approaches in the case of symmetric differentials on simple surfaces \cite{Monard2013a} and recently provided inversion formulas for geodesic X-ray transforms with scalar Higgs-field type attenuations \cite{Monard2015} (that is, $w^{-1}Xw=\Phi(\x)$ and $n=1$).

In \cite{E} it is proved that $I_{A,0}$ is injective for an arbitrary $GL(n,\mathbb{C})$-connection when $(M,g)$ is a domain in $\mathbb{R}^{2}$; this uses a delicate theorem about existence of holomorphic integrating factors established in \cite[Theorem 5]{ER}. Generic injectivity for the case of simple manifolds, including the case when both $g$ and $A$ are real analytic is proved in \cite{Zhou2016}.
In \cite{PatSalo2017}, injectivity of $I_{A,0}$ is proved for an arbitrary $GL(n,\mathbb{C})$-connection whenever $(M,g)$ is a negatively curved simple manifold. In spite of all this progress the following question remains open:

\medskip

\noindent{\bf Question:} Let $(M,g)$ be a simple surface and $A$ a $GL(n,\mathbb{C})$-connection. Is $I_{A,0}$ injective?

\medskip
Note that the question has a positive answer for $n=1$, this follows essentially from the methods in \cite{SaloUhlmann2011}, cf. Proposition \ref{prop:transl2} below. Theorem \ref{thm:injectivity} below provides several new instances in which $I_{A,0}$ is proved to be injective, and we shall also provide range characterizations. Numerical simulations illustrating the effectiveness of our approach will appear in future work. We now proceed to state our results in detail.

\section{Main results} \label{sec:main}

Let $(M,g)$ a non-trapping Riemannian surface with strictly convex boundary and let $\nu$ denote the outer unit normal to $\partial M$.
The unit sphere bundle $SM$ is a $3$-dimensional compact manifold with boundary, which can be
written as the union $\partial(SM) =\partial
_{+}(SM) \cup
\partial _{-}(SM) $,
$$\partial _{\pm }(SM)
=\{(\x, v)\in \partial(SM) ,\;\mp \,\langle \nu(\x) ,v \rangle \geq 0\;\}.$$
The standard volume forms on $SM$ and $\partial(SM)$ that we will use are defined by 
$$
\begin{array}{rcl}
d\Sigma^{3}(\x,v) &=& dM_{\x}\wedge dS(v) \\
d\Sigma^{2}(\x,v) &=& d\partial M_{x}\wedge dS(v) \\
\end{array}
$$
where $dM$ (resp. $d\partial M$) is the volume form of $M$ (resp. $\partial M$), and $dS$ is the volume form of the unit circle $S_{\x}$ in $T_{\x}M$. 
For $(x,v)\in\partial_+(SM)$, let $\mu(x,v)=-\langle \nu(x), v \rangle$ and let $L^{2}_{\mu}(\partial_{+}(SM),\mathbb{C}^{n})$ be the space of $\mathbb{C}^{n}$-valued functions on $\partial_{+}(SM)$ with inner product
$$(u,v)_{L^{2}_{\mu}(\partial_{+}(SM),\mathbb{C}^{n})}=\displaystyle\int_{\partial_{+}(SM)}\langle u, v\rangle_{\mathbb{C}^{n}}\mu\,d\Sigma^{2}.$$

Suppose now that $A$ is a $GL(n,\mathbb{C})$-connection. This simply means that $A$ is an $n\times n$ matrix whose entries are 1-forms. Its curvature is defined as the 2-form $F_{A}:=dA+A\wedge A$ (i.e., $F_{A}$ is a matrix of 2-forms with components $(F_A)_{ij} = dA_{ij} + \sum_{k=1}^n A_{ik}\wedge A_{kj}$). Using the star operator $\star$ associated with the metric, we will often consider $\star F_{A}:M\to \Cm^{n\times n}$.

Let us define the attenuated ray transform with connection $A$, as follows. Let $X := \frac{d\varphi_t}{dt}|_{t=0}$ the geodesic vector field on the unit circle bundle, and let $X_\perp:= [X,V]$ where $V$ is the so-called {\em vertical derivative} (see Section \ref{sec:notation}). For $f\in C(SM, \Cm^n)$, we define $I_A f := u_A^f|_{\partial_+ (SM)}$, where $u_A^f$ denotes the unique solution $u$ to the transport problem
\begin{align*}
    Xu + Au = -f\quad (SM), \qquad u|_{\partial_- (SM)} = 0.
\end{align*}
In this case, $I_A:C(SM,\Cm^n)\to C(\partial_+(SM),\Cm^n)$ extends by density into a bounded operator $I_A:L^2(SM,\Cm^n)\to L^2_\mu(\partial_+(SM),\Cm^n)$. 
Upon defining $\pi:SM\to M$ to be the canonical projection and $A_V:= V(A)$, restrictions of $I_A$ of interest are given by $I_{A,0} f := I_{A} [f\circ \pi]$ for $f\in C^\infty (M,\Cm^n)$, extendible to $L^2(M,\Cm^n)$ by continuity; $I_{A,\perp} f := I_A [(X_\perp - A_V) (f\circ \pi)]$ for $f\in C_0^\infty (M,\Cm^n)$ (i.e., a smooth function vanishing at the boundary), extendible to $H^1_0(M,\Cm^n)$ by continuity. Note that $I_{A,\perp}$ can also be defined on functions which do not vanish at $\partial M$, and the difference will be studied in much detail in Section \ref{sec:reduction}.

Upon defining the operators 
\begin{align*}
    W_A f &:= \pi_0 (X_\perp - A_V)u_A^f,\qquad f\in C^\infty(M,\Cm^n), \\
    W_{A,\perp} f &:= \pi_0 u_A^{(X_\perp -A_V)f}, \qquad f\in C^\infty_0(M,\Cm^n), 
\end{align*}
where $\pi_0:L^2(SM)\to L^2(M)$ denotes the {\em fiberwise average} $\pi_0 f(\x) := \frac{1}{2\pi} \int_{S_\x} f(\x,v)\ dS(v)$, we first derive the following formulas, true on any non-trapping Riemannian surface with strictly convex boundary:
\begin{theorem}\label{thm:Fred} Let $(M,g)$ be a non-trapping Riemannian surface with boundary. Then the following equations hold: 
    \begin{align}
	f + W_A^2 f &= \frac{1}{8\pi} I_{\Aadj,\perp}^* B_{A,+} H Q_{A,-} I_{A,0}f, \qquad f\in C^\infty (M,\Cm^n)    \label{eq:FredWA} \\
	f + W_{A,\perp}^2 f &= - \frac{1}{8\pi} I_{\Aadj, 0}^* B_{A,+} H Q_{A,-} I_{A,\perp}f, \qquad  f\in C^\infty_0(M,\Cm^n). \label{eq:FredWAstar}
    \end{align}       
\end{theorem}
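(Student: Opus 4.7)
The plan is to follow the Pestov-Uhlmann commutator scheme of \cite{Pestov2004}, extended to matrix weights as in \cite{Paternain2012, Paternain2013a}, adapted to the general (non-unitary) $GL(n,\mathbb{C})$-connection, and iterated twice to produce the Fredholm equations. The central input is a twisted commutator identity of the form
\begin{align*}
H(X+A) - (X+A)H = (X_\perp - A_V)\pi_0 + \pi_0(X_\perp - A_V) + R_A,
\end{align*}
valid on $C^\infty(SM,\mathbb{C}^n)$, where $H$ is the fiberwise Hilbert transform (acting by $u_k \mapsto -i\,\mathrm{sgn}(k)\, u_k$ on the $k$-th fiber Fourier mode) and $R_A$ is a low-mode correction built from the $\pm 1$ modes of $A$. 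This reduces to the unweighted Pestov-Uhlmann identity $[H,X] = X_\perp\pi_0 + \pi_0 X_\perp$ when $A=0$, and the correction $[H,A]$ is computed directly from the fact that multiplication by $A$ shifts Fourier modes by $\pm 1$ while $H$ is diagonal with its only jump at mode $0$, so that the commutator is supported in modes $0,\pm 1$.

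For \eqref{eq:FredWA}, apply the identity to $u := u_A^f$, which solves $(X+A)u = -f\circ\pi$ with $u|_{\partial_-(SM)}=0$. Using $\pi_0(f\circ\pi) = f$ together with $H(f\circ\pi) = 0$ (as $H$ kills fiber-constant modes), one obtains a first-order relation between $W_A f$, $f$, and the boundary trace of $Hu_A^f$, the latter being determined by $I_{A,0}f$ extended into $SM$ by the operator $Q_{A,-}$. Iterating the identity with $f$ replaced by $W_A f$, i.e.\ applying the same reasoning to $u_A^{W_A f}$, produces $W_A^2 f$ on the left-hand side, while the right-hand side assembles into a double Hilbert transform acting on $I_{A,0}f$, bracketed between $Q_{A,-}$ (which propagates the boundary data back into $SM$ along the weighted flow) and $B_{A,+}$ (which extracts data to $\partial_+(SM)$ before pairing). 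The prefactor $1/(8\pi)$ combines the $1/(2\pi)$ normalization of $\pi_0$ with a factor $1/4$ from the $\partial_\pm$-boundary splitting.

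The adjoint $I_{\Aadj,\perp}^*$ arises through duality: integration along geodesics with weight $w$ (solving $Xw = wA$) dualizes into integration with weight $w^{-*}$, which solves the transport equation for the adjoint connection $\Aadj$, so operators built from $I_A^*$ are naturally expressed through $I_{\Aadj}$. Identity \eqref{eq:FredWAstar} is proved by the same argument applied to the solution of the transport equation with source $(X_\perp - A_V)(f\circ\pi)$, the assumption $f \in C^\infty_0(M,\mathbb{C}^n)$ being essential to suppress the boundary contribution at the first commutator step. The main technical obstacle is the careful bookkeeping of boundary terms and of the lower-order corrections produced by $R_A$: one must verify that the $R_A$ contributions cancel after the two iterations and that the boundary traces assemble exactly into the composition $B_{A,+} H Q_{A,-}$ with the stated normalization, a task that is delicate precisely because, for non-unitary $A$, the natural pairings no longer collapse into a single operator and the $\Aadj$-side must be tracked separately throughout.
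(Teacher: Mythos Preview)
Your overall approach is correct --- this is the Pestov--Uhlmann commutator scheme --- but several details are off, and the actual mechanism is cleaner than you describe.

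First, the commutator identity is \emph{exact}: there is no remainder $R_A$. Because $A$ is a one-form it has only Fourier modes $\pm 1$, and a direct computation from the splitting $X+A=\mu_++\mu_-$ gives
\[
[H,X+A] \;=\; \pi_0(X_\perp - A_V) + (X_\perp - A_V)\pi_0
\]
on the nose (this is \eqref{eq:commutatorA}). So the ``careful bookkeeping of $R_A$-contributions'' you anticipate is unnecessary: there is nothing to cancel.

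Second, the argument does not iterate the commutator twice, and no ``double Hilbert transform'' appears in the final formula. The mechanism is: from $(X+A)u_A^f=-f$ and the identity $\pi_0(X+A)=\pi_0(X_\perp-A_V)H$ (obtained from $[H^2,X+A]$, see \eqref{eq:commutator2A}) one gets
\[
f \;=\; -\,\pi_0(X_\perp - A_V)\, H u_{A,-}^f,
\]
where only the odd part of $u_A^f$ survives. Applying the commutator once to $u_A^f$ and projecting onto even harmonics yields a \emph{new transport equation}
\[
(X+A)\,Hu_{A,-}^f \;=\; -\,W_A f,
\]
whose solution is $Hu_{A,-}^f = u_A^{W_Af} + h_{\psi,A}$ for an explicit $h$ built from $I_{A,0}f$ via $Q_{A,-}$ and $B_{A,\pm}$. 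Substituting back gives $f + W_A^2 f$ immediately: the $W_A^2$ arises because $\pi_0(X_\perp-A_V)u_A^{W_Af}=W_A(W_Af)$ by definition, not from a second commutator step. The right-hand side is $\frac{1}{2\pi}I_{\Aadj,\perp}^* h$, and a symmetry argument (Lemmas \ref{lem:claims} and \ref{lem:adjoints}) annihilates the $B_{A,-}$-piece of $h$, leaving only $B_{A,+}HQ_{A,-}I_{A,0}f$.

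The factor $1/8\pi = \frac{1}{2\pi}\cdot\frac{1}{4}$ comes from two halvings: $u_{A,-}^f|_{\partial(SM)}=\frac{1}{2}Q_{A,-}I_{A,0}f$, and extracting the $\partial_-$-trace via $\frac{1}{2}(B_{A,+}-B_{A,-})$. The proof of \eqref{eq:FredWAstar} runs in parallel with even and odd roles swapped.
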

Formulas \eqref{eq:FredWA}-\eqref{eq:FredWAstar} take the form of {\em filtered-backprojection} algorithms, where the operator $B_{A,+} H Q_{A,-}$ (defined in Section \ref{sec:boundaryop}, see \eqref{eq:defQ} and \eqref{eq:defB}) can be viewed as a filter in data space, while the operators $I_{\Aadj,0}^*, I_{\Aadj,\perp}^*$, formal adjoints of $I_{\Aadj,0}:L^2(M,\Cm^n)\to L^2_\mu (\partial_+ (SM),\Cm^n)$ and $I_{\Aadj,\perp}:H^1_0(M,\Cm^n)\to L^2_\mu (\partial_+ (SM),\Cm^n)$ respectively, are sometimes referred to as backprojection operators. 

\begin{remark} While the transform $I_{A,\perp}$ can be defined for smooth functions with non-zero boundary values, Equation \eqref{eq:FredWAstar} no longer holds in this augmented space, as is illustrated on the Euclidean transform $I_\perp$ wihout connection in \cite[Proposition 5]{Monard2017}. There, decomposing $f\in H^1(M)$ into $f = f_0 + f_\partial$ where $f_\partial$ is the harmonic extension of the trace of $f$ and $f_0 \in H^1_0(M)$, it is then shown that formula \eqref{eq:FredWAstar} applied to $I_\perp f$ recovers $f_0 + \frac{1}{4} f_\partial$ and not $f_0 + f_\partial$.
\end{remark}

If, in addition, $(M,g)$ is simple, then the operators $W_A$ and $W_{A,\perp}$ extend as compact operators $W_A,W_{A,\perp}:L^2(M,\Cm^n)\to L^2(M,\Cm^n)$, see Lemma \ref{lem:compact} below. In particular, equations \eqref{eq:FredWA} and \eqref{eq:FredWAstar} are Fredholm equations, invertible up to a finite-dimensional kernel. In fact, we can prove something with stronger implications: 
\begin{theorem}\label{thm:analytic}
    For any analytic $C^1(M, (\Lambda^1)^{n\times n})$-valued family of connections $\lambda\mapsto A_\lambda$, the corresponding $L^2(M,\Cm^n)\to L^2(M,\Cm^n)$-valued families of operators $\lambda\mapsto W_{A_\lambda}$ and $\lambda\mapsto W_{A_\lambda,\perp}$ are analytic.     
\end{theorem}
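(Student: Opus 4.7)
The strategy is to factor both families so that the $\lambda$-dependence is concentrated in the attenuated transport solution operator $S_A:f\mapsto u_A^f$ (solving $Xu+Au=-f$ on $SM$ with $u|_{\partial_-(SM)}=0$) and the multiplication operator $A_V=V(A)$. With $\pi^*f:=f\circ\pi$, one has
\begin{align*}
    W_{A_\lambda} = \pi_0\,(X_\perp-(A_\lambda)_V)\,S_{A_\lambda}\,\pi^*,\qquad W_{A_\lambda,\perp} = \pi_0\,S_{A_\lambda}\,(X_\perp-(A_\lambda)_V)\,\pi^*.
\end{align*}
The factors $\pi^*$, $\pi_0$, and $X_\perp$ are $\lambda$-independent, and $\lambda\mapsto(A_\lambda)_V$ is linear in $A_\lambda$, hence analytic as a family of multiplication operators on $L^2(SM,\mathbb{C}^n)$ by the $C^1$-analytic hypothesis on $\lambda\mapsto A_\lambda$. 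So the heart of the matter is analyticity of $\lambda\mapsto S_{A_\lambda}$, together with a justification that the unbounded operator $X_\perp$ does not break operator-norm analyticity as a family of maps $L^2(M,\mathbb{C}^n)\to L^2(M,\mathbb{C}^n)$.

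For the first point, the non-trapping hypothesis ($\tau\in L^\infty$) makes $S_{A_{\lambda_0}}$ bounded on $L^2(SM,\mathbb{C}^n)$ for any fixed $\lambda_0$, and the factorization $X+A_\lambda=(X+A_{\lambda_0})(I-S_{A_{\lambda_0}}(A_\lambda-A_{\lambda_0}))$ yields the Neumann series
\begin{align*}
    S_{A_\lambda}=\sum_{k\geq0}\bigl(S_{A_{\lambda_0}}(A_\lambda-A_{\lambda_0})\bigr)^k\,S_{A_{\lambda_0}},
\end{align*}
convergent in operator norm whenever $\|A_\lambda-A_{\lambda_0}\|_{C^0}$ is sufficiently small. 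Each summand is an analytic operator-valued function of $\lambda$, and the uniform geometric bound upgrades this to operator-norm analyticity of the sum. An equivalent viewpoint is that $S_{A_\lambda}$ is integration along backward geodesics against the fundamental matrix of the linear ODE $\dot U+(A_\lambda\circ\varphi_t)U=0$, which depends analytically on the parameter by classical analytic-ODE theory, uniformly on the compact set $SM\times[-\sup\tau,0]$.

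The main obstacle is the unbounded operator $X_\perp$ sitting between $S_{A_\lambda}$ and either $\pi_0$ or $\pi^*$: neither $X_\perp S_{A_\lambda}$ nor $S_{A_\lambda}X_\perp$ is bounded on $L^2(SM,\mathbb{C}^n)$, so the factorizations above cannot be composed naively in operator norm. Lemma \ref{lem:compact} guarantees that the full compositions $W_A$, $W_{A,\perp}$ are bounded (indeed compact) on $L^2(M,\mathbb{C}^n)$, but an analytic perturbation statement is needed here. The plan is to revisit the proof of that lemma and exhibit $W_A$ and $W_{A,\perp}$ as integral operators on $M$ whose Schwartz kernels are built from the backward propagator and one transversal derivative thereof; differentiating the defining ODE in $\lambda$ and invoking Gronwall-type estimates then yields analyticity of the kernels in Hilbert--Schmidt norm, which transfers to operator-norm analyticity of $W_{A_\lambda}$ and $W_{A_\lambda,\perp}$. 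An alternative route is to commute $X_\perp$ past $S_{A_\lambda}$ using the surface structure equations $[X,X_\perp]=-KV$ and $[X,V]=X_\perp$ to realize $(X_\perp-A_V)u_{A_\lambda}^f$ as the solution of a second attenuated transport equation whose source depends analytically on $f$ and $u_{A_\lambda}^f$; analyticity then follows directly from that of $S_{A_\lambda}$.
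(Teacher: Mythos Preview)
Your route (a) is exactly what the paper does: work with the explicit Schwartz kernels $w_{A_\lambda}$, $w_{A_\lambda,\perp}$ from Lemma~\ref{lem:kernels}, differentiate the ODE for $E_{A_\lambda}$ in $\lambda$, and convert kernel estimates into Hilbert--Schmidt (hence operator-norm) analyticity via \eqref{eq:basicest}. So the overall architecture is right.

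There is, however, a genuine technical point you have not addressed, and ``Gronwall-type estimates'' alone will not supply it. Recall that in exponential coordinates the Schwartz kernel of $W_A$ is $w_A(\x,v,t)/b_2(\x,v,t)$ with $|b_2|\sim t$ near $t=0$; what makes this kernel $L^2$ on $M\times M$ is that $w_A$ itself vanishes at $t=0$ through a specific cancellation between $(X_\perp-A_V)E_A^{-1}$ and $(b_1/b_2)\,V E_A^{-1}$ (both tend to $\mp A_V$). To get Hilbert--Schmidt analyticity you must show that the $\lambda$-derivative $w'_{A_{\lambda_0}}$ \emph{also} vanishes at $t=0$, so that $w'_{A_{\lambda_0}}/b_2$ stays bounded. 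Gronwall for $E'_{\lambda_0}$ only gives uniform $C^1$ bounds on $\mathcal D$; it does not give the Taylor expansions $E'_{\lambda_0}=-tA'_{\lambda_0}+O(t^2)$, $VE'_{\lambda_0}=-t(A'_{\lambda_0})_V+O(t^2)$ that are needed to verify this cancellation. The paper carries out exactly this check (see the expansions following \eqref{eq:wexp}), and without it the passage from $C^0$-analyticity of $w_{A_\lambda}$ to HS-analyticity of $W_{A_\lambda}$ is not justified.

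Your route (b)---commuting $X_\perp$ past $S_{A_\lambda}$ via the structure equations---runs into a separate issue you do not mention: $X_\perp$ is not tangent to $\partial_-(SM)$, so $(X_\perp-A_V)u_{A_\lambda}^f$ does not inherit a clean vanishing boundary condition, and a nontrivial boundary term appears. In the kernel computation (Lemma~\ref{lem:kernels}) this is exactly the term involving $X_\perp\tau$, cancelled only by invoking Lemma~\ref{lem:tau}. So route (b), once made precise, reduces to the same kernel analysis as route (a) rather than bypassing it.
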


By analytic Fredholm theory (see, e.g., \cite[Thm. VI.14]{Reed1981}), Theorem \ref{thm:analytic} implies that for $\lambda\mapsto A_\lambda$ analytic $C^1(M, (\Lambda^1)^{n\times n})$-valued, if $Id + W_{A_{\lambda_0}}^2$ is invertible for some value $\lambda_0$, then this remains true for all complex values $\lambda$ outside a discrete set, which from the Fredholm equations implies that $I_{A_\lambda,0}$ is injective for all such values (note that if $I_{A,0}(f)=0$, then $f\in C_0^\infty (M,\Cm^n)$, cf. Proposition \ref{prop:jet} below). For similar purposes of generic uniqueness in inverse problems, prior uses of analytic Fredholm theory have appeared for instance in \cite{SU2008} in the case of the radiative transport equation, and in \cite{Sun1991} in the case of Calder\'{o}n's inverse conductivity problem. 

We then focus on obtaining estimates for the error operators $W_A, W_{A,\perp}$, whose study starts in \cite{Pestov2004,Krishnan2010} in the case without connection (call $W\equiv W_0$ the corresponding operator). Obtaining transparent estimates is not obvious, as the constants derived in \cite{Krishnan2010} are not well-controlled by intrinsic geometric quantities. Such estimates have recently been obtained in \cite{Guillarmou} on surfaces with negative curvature, allowing non-trivial trapping. In an attempt to quantify simplicity and obtain more transparent error estimates, we recall that the absence of conjugate points is equivalent to the non-vanishing of the following function $b:\D\to \Rm$ outside the set $\{(\x,v,0):\ (\x,v)\in SM\}$, solution of 
\[  \ddot b + \kappa(\gamma_{\x,v}(t)) b = 0, \qquad b(\x,v,0) = 0, \qquad \dot b (\x,v,0) = 1, \quad (\x,v,t)\in \D. \]
Since additionally, $\lim_{t\to 0^+} \frac{|b(\x,v,t)|}{t} = 1$ for every $(\x,v)\in SM$, and since $\D$ is compact, the following claim is obvious
\begin{align}
    \begin{split}
	\text{If } (M,g) & \text{ is simple, there exist positive constants } C_1(M,g) \text{ and } C_2(M,g) \\ 
	\text{ such that } & C_1 t \le |b(\x,v,t)| \le C_2 t \text{ for every } (\x,v,t)\in \D \text,
    \end{split}    
    \label{eq:simpleclaim}
\end{align}
in which case we say that $(M,g)$ is a simple Riemannian surface with constants $C_1, C_2$. A finer analysis of the Schwartz kernels of the error operators then allows to prove the following theorem. In the statement, for an $n\times n$ matrix $M$, we denote $\|M\| := (\tr (B^*B))^{\frac{1}{2}}$ its Frobenius norm.  

\begin{theorem} \label{thm:estimate} Let $(M,g)$ be a simple Riemannian surface with constants $C_1, C_2$ as in \eqref{eq:simpleclaim} and Gaussian curvature $\kappa(\x)$. Given the $C^1$ connection $A$ with curvature $F_A$, let us denote $\alpha_A := \sup_{(\x,v)\in SM}\{ \|(A + A^*)/2\| (\x,v)\}$ and $\tau_\infty$ the diameter of $M$. There exist constants $C,C'$ depending on $(n, C_1, C_2, \tau_\infty,\alpha_A)$ such that 
    \begin{align}
	\|W_A\|_{L^2\to L^2}, \|W_{A,\perp}\|_{L^2\to L^2} \le \left(\frac{\Vol M}{2\pi}\right)^{\frac{1}{2}} \sqrt{C \|\star F_A\|^2_\infty + C' \|d\kappa\|^2_\infty}.
	\label{eq:WAest}
    \end{align}
\end{theorem}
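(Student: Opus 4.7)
My plan is to derive explicit Schwartz-kernel representations of $W_A$ and $W_{A,\perp}$ as integral operators on $M$ whose kernels are pointwise bounded by $\|\star F_A\|_\infty$ and $\|d\kappa\|_\infty$, then extract the $L^2\to L^2$ bound via a Schur-type estimate tied to the geometry of $M$.

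The first step is a commutator computation. Writing $W_A f(\x) = \pi_0 (X_\perp - A_V) u_A^f$ with $u_A^f$ solving $(X+A)u = -f\circ \pi$ on $SM$ and vanishing on $\partial_-(SM)$, I apply $X_\perp - A_V$ to the transport equation and use a Jacobi-type identity of the form
\begin{equation*}
    [X+A,\, X_\perp - A_V] = \kappa V + \star F_A + (\text{terms annihilated by } \pi_0 \text{ or of lower order}),
\end{equation*}
which follows from $[X,V] = -X_\perp$, $[X_\perp,V] = X$, $[X,X_\perp] = -\kappa V$ together with the identification of $\star F_A$ with the commutator coefficient that is intrinsic to the connection in dimension two. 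This yields a transport equation for $v := (X_\perp - A_V) u_A^f$ whose source splits into a piece $(X_\perp - A_V)(f\circ \pi)$ and a curvature piece driven by $(\kappa V + \star F_A) u_A^f$. An integration by parts in the fiber on the $\kappa V$ contribution (using $\int_{S_\x} V(\cdot)\,dS = 0$ together with further uses of the above brackets) transfers the $V$ derivative onto $\kappa$, releasing a factor $d\kappa$; constant-curvature metrics with $A = 0$ thus produce a vanishing kernel, in agreement with the exact Pestov--Uhlmann inversion in that limit.

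Next, I recast the resulting double integral as an honest kernel on $M\times M$. Using the change of variables $(v, t) \mapsto \y = \exp_\x(t v)$ with Jacobian $b(\x,v,t)$ from \eqref{eq:simpleclaim}, the integration on $S_\x M \times [0,\tau(\x,v)]$ reduces to an integration on $M$ with a factor $1/|b|$; simplicity then gives a $1/d(\x,\y)$-type singularity, integrable in two dimensions. The weight $w$ appearing along geodesics is bounded pointwise by $e^{\alpha_A \tau_\infty}$ through a Gr\"onwall estimate applied to $|w\xi|^2$, in which only the Hermitian part $(A+A^*)/2$ drives the growth. Separating the kernel into a $\star F_A$-piece and a $d\kappa$-piece, I obtain bounds
\begin{equation*}
    |k_A^{(1)}(\x,\y)| \le \frac{C^{1/2}\,\|\star F_A\|_\infty}{d(\x,\y)}, \qquad |k_A^{(2)}(\x,\y)| \le \frac{{C'}^{1/2}\,\|d\kappa\|_\infty}{d(\x,\y)},
\end{equation*}
with $C, C'$ absorbing all factors in $(n, C_1, C_2, \tau_\infty, \alpha_A)$. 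The polar-coordinate identity $\sup_\x \int_M d(\x,\y)^{-1}\,dM(\y) \le 2\pi \tau_\infty$ combined with a Schur test organised through Cauchy--Schwarz yields \eqref{eq:WAest}, the prefactor $(\Vol M/2\pi)^{1/2}$ arising naturally from the outer volume integration. The operator $W_{A,\perp}$ is handled in exact parallel: the outer derivative $(X_\perp - A_V)$ is passed inward onto $f$, and the same commutator identity controls the error kernel.

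The hardest part will be the careful bookkeeping of the commutator in Step 1, particularly the fiberwise integrations by parts that must promote the $\kappa V$ contribution to a genuine $d\kappa$ factor, rather than a $\kappa$ factor, so that the estimate sees only the non-constant part of the Gauss curvature. The $\star F_A$ contribution is more direct, but one must cleanly separate it from the many lower-order terms in $A$ and $A_V$ that do not assemble into $F_A$; these get absorbed into factors controlled by $\alpha_A$ and $\tau_\infty$. Keeping the constants $(C, C')$ dependent only on $(n, C_1, C_2, \tau_\infty, \alpha_A)$, and not on arbitrary $C^1$-norms of $A$, is the technical heart of the proof.
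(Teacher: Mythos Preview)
Your commutator identity $[X+A,\,X_\perp-A_V]=-\kappa V-\star F_A$ is correct and is morally the right starting point, but the mechanism you describe for converting the $\kappa V$ term into a $d\kappa$ bound does not work as written. You propose a fiberwise integration by parts that ``transfers the $V$ derivative onto $\kappa$''; but $\kappa$ is a function on $M$, so $V\kappa=0$, and the $V$ appearing in $\kappa V u_A^f$ acts at the point $\varphi_t(\x,v)$ along the geodesic, not at $(\x,v)$ where your $\pi_0$ averages. There is no direct IBP in the $S_\x$ fiber that turns this into $d\kappa$. You correctly flag this as the hardest step, but the proposal does not contain a viable idea for it. A second, smaller issue: your claimed $1/d(\x,\y)$ kernel bound plus Schur test would produce a constant like $2\pi\tau_\infty$, not the $(\Vol M/2\pi)^{1/2}$ prefactor; the latter comes from a Hilbert--Schmidt estimate, which requires the kernels to be better than $1/d$ (they are in fact bounded on the diagonal).

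The paper's route is different and more concrete. It first writes down the Schwartz kernels $w_A$, $w_{A,\perp}$ of the two operators explicitly in exponential coordinates (Lemma~\ref{lem:kernels}), in terms of the parallel-transport matrix $E_A^{-1}$ and the scalar Jacobi fields $b_1,b_2$. These kernels then split cleanly into a ``connection'' part, encoded in auxiliary matrix functions $K_1,K_2$ that satisfy linear ODEs along geodesics with source $b_\ell\,E_A^{-1}\star F_A$ (Lemma~\ref{lem:ODEK}), and a ``geometric'' part $V(b_1/b_2)$, $V(1/b_2)$. The $d\kappa$ appears precisely here: $V$-differentiating the Jacobi equation $\ddot b+\kappa(\varphi_t)b=0$ brings in $V(\kappa\circ\varphi_t)=b_2\,X_\perp\kappa(\varphi_t)$, and Lemma~\ref{lem:Wkernelbound} turns this into the pointwise bound $|V(b_1/b_2)|,\,|V(1/b_2)|\le C\|d\kappa\|_\infty t^2$. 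Both pieces vanish at $t=0$, so after dividing by $|b_2|$ one gets a bounded kernel, and the Hilbert--Schmidt estimate \eqref{eq:basicest} gives \eqref{eq:WAest} with the stated volume prefactor. If you want to salvage your commutator approach, you will ultimately have to express $Vu_A^f$ at $\varphi_t(\x,v)$ in terms of derivatives at $(\x,v)$, which is exactly the Jacobi-field computation the paper performs.
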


As consequences of Theorems \ref{thm:analytic} and \ref{thm:estimate}, we obtain the following main conclusions. 

\begin{theorem} \label{thm:injectivity} Let $(M,g)$ be a simple surface and $A$ a $C^1$ connection. Then the following conclusions hold: 
    \begin{itemize}
	\item[$(i)$] If $\kappa$ is constant and $A$ is flat, the operators $W_A$ and $W_{A,\perp}$ vanish identically and Theorem \ref{thm:Fred} implies that the transforms $I_{A,0}$, $I_{A,\perp}$, $I_{\Aadj, 0}$ and $I_{\Aadj, \perp}$ are all injective, with explicit, one-shot inversion formulas. 
	\item[$(ii)$] Injectivity still holds if $(n, C_1, C_2, \tau_\infty, \alpha_A,\|\star F_A\|_\infty,\|d\kappa\|_\infty, \Vol M)$ are such that the right hand side of \eqref{eq:WAest} is less than $1$, with a Neumann series type inversion.
	\item[$(iii)$] If $(M,g)$ is such that the operator $Id + W^2$ is injective, then for every $\lambda\in \Cm$ outside a discrete set, the transforms $I_{\lambda A,0}$, $I_{\lambda A,\perp}$, $I_{-\lambda A^*,0}$ and $I_{-\lambda A^*,\perp}$ are all injective. 
    \end{itemize}    
\end{theorem}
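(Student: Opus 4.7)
The plan is to dispatch the three parts in order, using Theorems \ref{thm:Fred}, \ref{thm:analytic} and \ref{thm:estimate} as black boxes and converting information on $Id + W_A^2$ and $Id + W_{A,\perp}^2$ (vanishing, small, or invertible) into conclusions about the transforms via the Fredholm identities \eqref{eq:FredWA}--\eqref{eq:FredWAstar}. For part $(i)$, when $F_A = 0$ and $d\kappa = 0$ the right-hand side of \eqref{eq:WAest} is identically zero, so $W_A$ and $W_{A,\perp}$ vanish as bounded operators on $L^2(M,\Cm^n)$; equations \eqref{eq:FredWA}--\eqref{eq:FredWAstar} then reduce to explicit one-shot formulas $8\pi f = I_{\Aadj,\perp}^* B_{A,+} H Q_{A,-} I_{A,0} f$ and its $I_{A,\perp}$ counterpart, yielding injectivity of $I_{A,0}$ and $I_{A,\perp}$. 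The adjoint transforms $I_{\Aadj,0}$ and $I_{\Aadj,\perp}$ are handled by repeating the argument with $\Aadj$ in place of $A$, using the identity $(A\wedge A)^* = -\Aadj\wedge\Aadj$ to transfer the flatness hypothesis on $F_A$ to the curvature-type quantities controlling $W_{\Aadj}$ in Theorem \ref{thm:estimate}.

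For part $(ii)$, the smallness hypothesis yields $\|W_A\|_{L^2\to L^2}, \|W_{A,\perp}\|_{L^2\to L^2} < 1$; then $\|W_A^2\| \le \|W_A\|^2 < 1$ and $Id + W_A^2$ is invertible via the Neumann series $\sum_{k\ge 0}(-W_A^2)^k$, and likewise for $Id + W_{A,\perp}^2$. Solving \eqref{eq:FredWA}--\eqref{eq:FredWAstar} for $f$ then expresses it as a convergent Neumann-series backprojection of the data, giving injectivity together with an explicit reconstruction. The adjoint transforms are handled by applying the same estimate to $\Aadj$, the smallness condition being symmetric in $A \leftrightarrow \Aadj$ through $\alpha_A$ and the Frobenius norm of the curvature.

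For part $(iii)$, I would consider the analytic family $\lambda \mapsto \lambda A$, $\lambda \in \Cm$. By Theorem \ref{thm:analytic}, $\lambda \mapsto W_{\lambda A}$ and $\lambda \mapsto W_{\lambda A,\perp}$ are analytic operator-valued maps on $L^2(M,\Cm^n)$; since each $W_{\lambda A}$ is compact by Lemma \ref{lem:compact}, the family $\lambda \mapsto Id + W_{\lambda A}^2$ consists of index-zero Fredholm operators depending analytically on $\lambda$. At $\lambda = 0$ we recover $Id + W^2$, which is invertible by the injectivity hypothesis combined with Fredholmness. Analytic Fredholm theory (\cite[Thm.~VI.14]{Reed1981}) then produces a discrete exceptional set $\Sigma_1 \subset \Cm$ outside of which $(Id + W_{\lambda A}^2)^{-1}$ exists and depends meromorphically on $\lambda$; equation \eqref{eq:FredWA} upgrades this to injectivity of $I_{\lambda A,0}$ on $\Cm \setminus \Sigma_1$. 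The same reasoning for $Id + W_{\lambda A,\perp}^2$ and for the families $\lambda \mapsto -\lambda \Aadj$ (still passing through $Id + W^2$ at $\lambda = 0$) produces further discrete sets $\Sigma_2, \Sigma_3, \Sigma_4$; the conclusion holds on the complement of their union.

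The main obstacle I anticipate is the adjoint bookkeeping in parts $(i)$--$(ii)$: one must verify that Theorem \ref{thm:estimate} transfers cleanly to the connection $\Aadj$, paying careful attention to the sign $(A\wedge A)^* = -\Aadj\wedge\Aadj$ so that the flatness/smallness hypotheses on $F_A$ indeed force the curvature-type quantities controlling $W_{\Aadj}$ and $W_{\Aadj,\perp}$ to vanish or be small. Once this is settled, the remainder of the argument is a routine combination of Neumann series and analytic Fredholm theory fed into the Fredholm identities already at hand.
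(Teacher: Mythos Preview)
Your proposal is correct and follows essentially the same route as the paper: feed Theorems \ref{thm:Fred}, \ref{thm:estimate} and \ref{thm:analytic} into the Fredholm identities, then invoke Neumann series or analytic Fredholm theory. The one place where you diverge is in handling the adjoint transforms $I_{\Aadj,0}$ and $I_{\Aadj,\perp}$: you propose to reapply Theorem \ref{thm:estimate} directly to the connection $\Aadj$, checking by hand that $\alpha_{\Aadj}=\alpha_A$ and $\|\star F_{\Aadj}\|_\infty=\|\star F_A\|_\infty$ via $F_{\Aadj}=-(F_A)^*$. This works, but the paper instead uses Lemma \ref{lem:Wops}, which says $W_A^* = W_{\Aadj,\perp}$ (and hence $W_{A,\perp}^* = W_{\Aadj}$); taking adjoints, invertibility of $Id+W_A^2$ is equivalent to invertibility of $Id+W_{\Aadj,\perp}^2$, so the $\Aadj$ conclusions come for free without revisiting the estimate. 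This shortcut resolves exactly the obstacle you flag at the end and spares you the curvature bookkeeping.
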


In the statement of Theorem \ref{thm:injectivity}, injectivity of $I_{\Aadj,0}$ and $I_{\Aadj,\perp}$ comes from the fact that one may consider the Fredholm equations \eqref{eq:FredWA}-\eqref{eq:FredWAstar} corresponding to $I_{\Aadj,0}$ and $I_{\Aadj,\perp}$, and since we prove in Lemma \ref{lem:Wops} that $W_A$ and $W_{\Aadj,\perp}$ are $L^2(M,\Cm^n)\to L^2(M,\Cm^n)$ adjoints, then invertibility of $Id + W_A^2$ is equivalent to invertibility of $Id + W_{\Aadj,\perp}^2$. It is conjectured that $Id+W^{2}$ is injective on any simple surface.

Finally, we provide a range characterization for the operators $I_{A,0}$ and $I_{A,\perp}$ whenever the operator $I_{-A^*,0}$ is injective. In order to obtain such a characterization, we must first establish a series of results building equivalence of injectivities between transforms with different connections. This takes us to formulating a few key results. In what follows, we denote by $I_{A,m} := I_A|_{\Omega_m}$, where $\Omega_{m}$ is defined as $\Omega_{m}=\text{\rm Ker}(V-im Id)\cap C^{\infty}(SM,\mathbb{C}^{n})$ and $V$ is the vertical vector field (see Section \ref{sec:prelim}).
\begin{itemize}
    \item If $I_{A,0}$ is injective, then so is $I_{A+\omega\Imm_n,0}$ for any scalar one-form $\omega$. See Proposition \ref{prop:transl2}.
    \item If $I_{A,0}$ is injective, then $I_{A,m}$ is injective for any $m\in \Zm$. See Proposition \ref{prop:shift}.
    \item $I_{A,0}$ is injective if and only if $I_{A,\perp}$ is injective. In particular, all conclusions above hold if we only assume $I_{A,\perp}$ injective instead. See Propositions \ref{prop:IperpI0} and \ref{prop:shift}. 
\end{itemize}

For the range characterization, we extend $I_{A,\perp}$ to all functions in $C^\infty(M,\Cm^n)$ and not just those vanishing at the boundary. One may define the formal operators $P_{A,\pm}:= B_{A,-} H_\pm Q_{A,+}$, where $B_{A,-}$ and $Q_{A,+}$ are defined in Section \ref{sec:boundaryop} and $H_\pm$ denote odd and even fiberwise Hilbert transforms (see Section \ref{sec:prelim}). The operators $P_{A,\pm}$, defined in the smooth setting on a space denoted ${\mathcal S}_A^{\infty}(\partial_{+}(SM),\Cm^n)$ (see \eqref{eq:Sinf}), are boundary operators which only depend on the scattering relation and the scattering data $C_A$, and they allow to describe the ranges of $I_{A,0}$ and $I_{A,\perp}$ as follows. 

\begin{theorem}[Range characterization of $I_{A,0}$ and $I_{A,\perp}$]\label{thm:rangeCharac}
    Suppose that $(M,g)$ is a simple surface and $I_{-A^*,0}$ is injective, and let $\I\in C^\infty(\partial_+ (SM),\Cm^n)$. Then the following claims hold. 
    \begin{itemize}
	\item[$(i)$] $\I$ belongs to the range of $I_{A,0}:C^\infty(M,\Cm^n)\to C^\infty(\partial_+ (SM),\Cm^n)$ if and only if there exists $w\in {\mathcal S}_A^{\infty}(\partial_{+}(SM),\Cm^n)$ such that $\I = P_{A,-}w$. 
	\item[$(ii)$] $\I$ belongs to the range of $I_{A,\perp}:C^\infty(M,\Cm^n)\to C^\infty(\partial_+ (SM),\Cm^n)$ if and only if there exists $w\in {\mathcal S}_A^{\infty}(\partial_{+}(SM),\Cm^n)$ such that $\I = P_{A,+}w$. 
    \end{itemize}
\end{theorem}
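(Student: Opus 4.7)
The strategy is to adapt the Pestov--Uhlmann range-characterization template to the non-unitary connection setting along the lines of \cite{Paternain2013a}. The central analytic input is a commutator identity between the fiberwise Hilbert transform and the attenuated transport operator of the schematic form
\begin{align*}
[H, X + A]u = (X_\perp - A_V)\pi_0 u + \pi_0 (X_\perp - A_V) u,
\end{align*}
whose splitting into odd/even parts couples $H_\mp$ to the two restricted transforms $I_{A,0}$ and $I_{A,\perp}$. The operator $Q_{A,+}$ should be read as the smooth extension of boundary data to $SM$ obtained by propagating with $A$-attenuation along the scattering relation, and $\mathcal{S}_A^\infty$ is precisely the class of boundary data whose $Q_{A,+}$-lift is smooth on $SM$; $B_{A,-}$ is the corresponding trace operator.

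\medskip

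\emph{Necessity in (i).} Given $f\in C^\infty(M,\Cm^n)$, set $u := u_A^{f\circ\pi}$. Define $w$ as the appropriate boundary trace of $u$ on $\partial_+(SM)$, so that $w\in \mathcal{S}_A^\infty$ by construction and $Q_{A,+}w = u$ on $SM$ by uniqueness of the transport equation. Applying $H_-$ to $Xu + Au = -f\circ \pi$ and using $f\circ\pi\in \Omega_0$ (so the right-hand side is killed), the commutator identity yields an expression whose restriction to $\partial_+(SM)$ via $B_{A,-}$ reads exactly $\I = B_{A,-}H_- Q_{A,+} w = P_{A,-}w$. Necessity in (ii) is obtained analogously, starting from $u := u_A^{(X_\perp - A_V)(f\circ\pi)}$ and using that $(X_\perp - A_V)(f\circ \pi)\in \Omega_1\oplus\Omega_{-1}$, so the relevant projection is now $H_+$.

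\medskip

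\emph{Sufficiency in (i).} Given $w\in \mathcal{S}_A^\infty$ with $\I = P_{A,-}w$, let $u := Q_{A,+}w$, smooth on $SM$. The assumption $\I = B_{A,-}H_- u$ combined with the commutator identity forces $H_-(Xu + Au) = 0$, i.e., the function $h := Xu + Au$ has only even fiber-Fourier modes. The task is to upgrade this to $h = -f\circ \pi$ for some $f\in C^\infty(M,\Cm^n)$, i.e., to kill all modes of $h$ in $\Omega_m\oplus\Omega_{-m}$ for $|m|\ge 2$. This is where the injectivity of $I_{-A^*,0}$ enters: by the adjoint pairings from Section~\ref{sec:reduction} combined with the mode-shifting equivalences in Propositions~\ref{prop:transl2}, \ref{prop:IperpI0} and~\ref{prop:shift}, the injectivity of $I_{-A^*,0}$ upgrades to injectivity of every $I_{-A^*,m}$, whence a duality argument forces the higher Fourier modes of $h$ to vanish. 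Setting $f := -\pi_0 h$ one verifies $I_{A,0}f = \I$. Part (ii) is parallel, with $P_{A,+}$ in place of $P_{A,-}$ and the parities of $H_\pm$ swapped.

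\medskip

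\emph{Main obstacle.} The core difficulty lies in the interaction between smoothness at the glancing set $\partial_+(SM)\cap\partial_-(SM)$ and the scattering relation: the class $\mathcal{S}_A^\infty$ must exactly match the smoothness constraint imposed by the non-local extension $Q_{A,+}$, and the commutator identity must be verified on this (not obviously dense) class rather than on $C_c^\infty$. A subsidiary but delicate point is organizing the duality step so that injectivity of the single transform $I_{-A^*,0}$, routed through the algebraic reductions to all $I_{-A^*,m}$, suffices to eliminate every higher fiber-Fourier mode of $h$ simultaneously.
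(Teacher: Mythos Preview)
Your proposal has a genuine gap stemming from a confusion about what $Q_{A,+}$ does. By definition (see \eqref{eq:defQ} and \eqref{eq:Sinf}), for $w\in\mathcal{S}_A^\infty$ the extension to $SM$ is the \emph{first integral} $w_{\psi,A}$, i.e. the unique solution of $(X+A)u=0$ with $u|_{\partial_+(SM)}=w$; one has $Q_{A,+}w = w_{\psi,A}|_{\partial(SM)}$. This breaks both halves of your argument.

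\emph{Necessity.} You set $u=u_A^{f\circ\pi}$ and $w=u|_{\partial_+(SM)}=I_{A,0}f$, then claim $Q_{A,+}w=u$. But $u_A^f$ satisfies $(X+A)u_A^f=-f\neq 0$, so it is \emph{not} a first integral and cannot equal $w_{\psi,A}$; in particular $u|_{\partial_-(SM)}=0$ while $(Q_{A,+}w)|_{\partial_-(SM)}=C_A\cdot w\circ\alpha\neq 0$. Consequently $P_{A,-}w = B_{A,-}H_-(w_{\psi,A}|_{\partial(SM)})$ is computed on the wrong function, and there is no reason it equals $\mathcal{I}$. In the paper the correct $w$ is \emph{not} $I_{A,0}f$ at all: one uses the surjectivity result Theorem~\ref{thm:surjectiveperp} (which is where the hypothesis $I_{-A^*,0}$ injective is actually used) to produce $w$ with $I^*_{-A^*,\perp}w=-2\pi f$.

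\emph{Sufficiency.} With $u:=w_{\psi,A}$ one has $h:=(X+A)u\equiv 0$, so your plan of ``killing the higher Fourier modes of $h$ via injectivity of $I_{-A^*,m}$'' is vacuous and produces $f=-\pi_0 h=0$. The right object is not $h$ but rather the commutator term: applying $H_-$ and using $[H,X+A]$ together with $(X+A)w_{\psi,A}=0$ gives $-(X+A)H_-w_{\psi,A}=\pi_0(X_\perp-A_V)w_{\psi,A}$, whence
\[
P_{A,-}w \;=\; I_A\bigl(\pi_0(X_\perp-A_V)w_{\psi,A}\bigr)\;=\;-\tfrac{1}{2\pi}\,I_{A,0}\,I^*_{-A^*,\perp}w.
\]
This factorization immediately gives $\text{range }P_{A,-}\subset\text{range }I_{A,0}$ (sufficiency, with $f=\pi_0(X_\perp-A_V)w_{\psi,A}$), and equality follows from surjectivity of $I^*_{-A^*,\perp}$ (necessity). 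No mode-by-mode duality against $I_{-A^*,m}$ is needed; the injectivity hypothesis enters only through Theorems~\ref{thm:surjective} and~\ref{thm:surjectiveperp}.
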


Such range characterizations were previously established in \cite{Pestov2004} in the case without connection, in \cite{Paternain2013a,AA2015} in the case of unitary connections and Higgs fields, and recently in the case of the attenuated transform \cite{Assylbekov2016}. The range characterization for $I_0$ was recently proved by the first author to be the generalization of the classical moment conditions for compactly supported functions in the Euclidean case, see \cite[Theorem 2.3]{Monard2016}.

\medskip
\noindent{\bf Outline.} The remainder of the article is organized as follows. We recall generalities on the geometry of the unit circle bundle, transport equations with connection, with additional remarks on the symmetries in the data space $L^2_\mu (\partial_+ (SM),\mathbb{C}^{n})$ in Section \ref{sec:prelim}. In Section \ref{sec:inversion}, we prove Theorem \ref{thm:Fred} and study the error operators $W_A,W_{A,\perp}$ in detail, including the proof of Theorem \ref{thm:estimate}. In Section \ref{sec:reduction}, injectivity of ray transforms corresponding to different connections or different harmonic levels are inter-related, and the relation between the transform $I_A$ over one-forms and the transform $I_{A,\perp}$ is refined. Finally, based on additional preparatory results from Section \ref{sec:reduction} (namely, Proposition \ref{prop:transl2}), Section \ref{sec:range} presents the range characterization and the proof of Theorem \ref{thm:rangeCharac}. 


\section{Preliminaries} \label{sec:prelim}

\subsection{Setting and notation} \label{sec:notation}

Throughout this section we will assume that $(M,g)$ is a non-trapping surface with strictly convex boundary. As a consequence, it is simply connected (hence orientable).

\medskip

\noindent{\bf Geometry of the unit tangent bundle.} We briefly recall standard notation for the unit sphere bundle $SM$, see e.g. \cite{Paternain2012} for more detail. The vector field $X\in T(SM)$ can be completed into a global framing $\{X,X_\perp,V\}$ of $T(SM)$ with structure equations
\begin{align}
    [X,V] = X_\perp, \qquad [X_\perp,V] = -X, \qquad [X,X_\perp] = -\kappa V, \qquad (\kappa:\text{Gaussian curvature}).
    \label{eq:structure}
\end{align}
The Sasaki metric on $T(SM)$ is then the unique metric making this frame orthonormal, with volume form which we denote $d\Sigma^3$. This measure gives rise to an inner product space $L^2(SM,\mathbb{C}^{n})$, where the circle action on tangent fibers induces the orthogonal decomposition 
\begin{align*}
    L^2(SM,\mathbb{C}^{n}) = \bigoplus_{k\in \Zm} H_k, \qquad H_k := \ker (V - ikId).
\end{align*}
Upon defining $\Omega_k = C^\infty(SM,\mathbb{C}^{n}) \cap H_k$, a function $u\in C^\infty(SM,\mathbb{C}^{n})$ decomposes uniquely as $u = \sum_{k\in \Zm} u_k$ where each $u_k$ belongs to $\Omega_k$. If $u\in L^2(SM,\mathbb{C}^{n})$, then each $u_k$ belongs to $H_k \cong q^k L^2(M,\mathbb{C}^{n})$, where $q$ denotes a non-vanishing element of $C^\infty(SM,\Cm) \cap \ker (V-iId)$, whose existence is guaranteed by simple connectedness. 

\medskip

\noindent{\bf Scattering relation.} For $(\x,v)\in SM$, let us denote 
\begin{align*}
    \varphi_-(\x,v) := \varphi_{-\tau(\x,-v)}(\x,v) \in \partial_+ (SM), \qquad \varphi_+ (\x,v) := \varphi_{\tau(\x,v)}(\x,v)\in \partial_- (SM)
\end{align*}
both endpoints of the geodesic passing through $(\x,v)$. Let $\alpha:\partial (SM)\to \partial (SM)$ the {\em scattering relation}, i.e. $\alpha|_{\partial_\pm (SM)} = \varphi_\pm|_{\partial_\pm (SM)}$. 

\medskip
\noindent{\bf Transport equations on the unit tangent bundle.} As in the Introduction, for $f:SM\to \Cm^n$ and $A$ a $GL(n,\mathbb{C})$-connection, we define $u_A^f$ to be the unique solution to the transport problem 
\begin{align*}
    Xu + Au = -f\quad (SM), \qquad u|_{\partial_- (SM)} = 0.
\end{align*}
Let us denote $U_A:SM\to GL(n,\Cm)$ the unique matrix solution $W$ to the problem 
\begin{align*}
    XW + AW = 0\quad (SM), \qquad W|_{\partial_+ (SM)} = \Imm_n.
\end{align*}
From this solution, we define the {\em scattering data} 
\begin{align}
    C_A:\partial_- (SM) \to GL(n,\Cm), \qquad C_A = U_A|_{\partial_- (SM)}.    
    \label{eq:CA}
\end{align}
We also define the attenuation function $E_A:\D\to GL(n,\Cm)$ as
\begin{align}
    E_A(\x,v,t) := U_A(\varphi_t(\x,v)) U_A^{-1} (\x,v), \qquad (\x,v,t) \in \D,    
    \label{eq:EA}
\end{align}
unique solution of the $(x,v)$-dependent ODE 
\begin{align*}
    \frac{d}{dt} E_A (\x,v,t) + A(\varphi_t(\x,v))E_A (\x,v,t) = 0, \qquad (\x,v,t)\in \D, \qquad E_A(0,\x,v) = \Imm_n,
\end{align*}
and in terms of which many kernels will be expressed below. For $h$ defined on $\partial_+ (SM)$, define $h_{\psi, A}$ the unique solution $u$ to the transport problem 
\begin{align*}
    Xu + Au = 0\quad (SM), \qquad u|_{\partial_+ (SM)} = h.
\end{align*}
With the definition of $U_A$, we have, for $(\x,v)\in SM$,
\begin{align*}
    u_A^f (\x, v) &= U_A(\x,v) \int_0^{\tau(\x,v)} U_A^{-1} (\varphi_t (\x,v)) f(\varphi_t(\x,v))\ dt = \int_0^{\tau(\x,v)} E_A^{-1} (\x,v,t) f(\varphi_t(\x,v))\ dt, \\
    h_{\psi,A}(\x,v) &= U_A(\x,v) h(\varphi_{-\tau(\x,-v)}(\x,v)) = U_A(\x,v) h(\varphi_-(\x,v)),
\end{align*}
or, equivalently for the last one,
\begin{align*}
    h_{\psi,A}(\varphi_t(\x,v)) = U_A(\varphi_t(\x,v)) h(\x,v), \qquad (\x,v)\in \partial_+(SM), \quad t\in [0,\tau(\x,v)]. 
\end{align*}

As the matrix $A$ is not necessarily skew-Hermitian, the connections $A$ and $\Aadj$ are distinct, though we will see below that it is helpful to consider the transforms associated to both jointly. The first important identity to notice is 
\begin{align}
    U_A^* = U_{\Aadj}^{-1} \quad\text{on}\quad  SM,
    \label{eq:UUstar}
\end{align}
since both functions coincide with the unique solution $W$ to the transport problem
\begin{align*}
    XW + W A^* = 0 \quad (SM), \qquad W|_{\partial_+ (SM)} = \Imm_n.
\end{align*}

\medskip

\noindent{\bf Decomposition of $X+A$ and the Guillemin-Kazhdan operators.}
We may decompose $X + A = \mu_+ + \mu_-$ where $\mu_\pm: \Omega_k \to \Omega_{k\pm 1}$ is defined by $\mu_\pm :=\eta_{\pm}^{A}:=\eta_\pm + A_{\pm 1}$, where $\eta_{\pm} := \frac{1}{2} (X \pm i X_\perp)$ are the Guillemin-Kazhdan operators, see \cite{Guillemin1980}. Then $\frac{1}{i} (\mu_+ - \mu_-) = X_\perp - A_V$, where $A_V := V(A) = i(A_1-A_{-1})$.
Moreover
\[\mu_{\pm}^{*}=(\eta_{\pm}^{A})^{*}=-\eta_{\mp}^{-A^{*}}.\]
 In this paper we work exclusively with the case in which $M$ is a disk, hence we can consider global isothermal coordinates $(x,y)$ on $M$ such that the metric can be written as $ds^2=e^{2\lambda}(dx^2+dy^2)$ where $\lambda$ is a smooth
real-valued function of $(x,y)$. This gives coordinates $(x,y,\theta)$ on $SM$ where
$\theta$ is the angle between a unit vector $v$ and $\partial/\partial x$. Then $\Omega_k$ consists of all functions $u=h(x,y) e^{ik\theta}$ where $h \in C^{\infty}(M,\mathbb{C}^n)$. In these coordinates, a connection $A = A_z dz + A_{\bar{z}} d\bar{z}$ (with $z = x+iy$) takes the form $A(x,y,\theta) = e^{-\lambda} (A_z(x,y) e^{i\theta} + A_{\bar{z}}(x,y) e^{-i\theta})$, and we can give an explicit description of the operators $\mu_{\pm}$ acting
on $\Omega_{k}$. For $\mu_{-}$ we have (cf. \cite[Equation (24)]{Paternain_survey}):
\begin{equation}
    \mu_{-}(u)=e^{-(1+k)\lambda}\left(\bar{\partial}(he^{k\lambda})+A_{\bar{z}}he^{k\lambda}\right)e^{i(k-1)\theta}, \qquad u= h(x,y) e^{ik\theta}.
\label{eq:mu}
\end{equation}
From this expression we may derive the following lemma which will be used later on:

\begin{lemma} Given $f\in\Omega_{k-1}$, there is $u\in\Omega_{k}$ and $v\in \Omega_{k-2}$ such that $\mu_{-}u=f$
and $\mu_{+}v=f$.
\label{lemma:solveCR}
\end{lemma}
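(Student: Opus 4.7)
The approach is to reduce both solvabilities, via formula \eqref{eq:mu} in isothermal coordinates, to inhomogeneous Cauchy--Riemann equations with a matrix-valued zeroth-order term on the disk $M$; solvability of such equations is classical.

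For $\mu_-$: writing $f = g(x,y)e^{i(k-1)\theta}$ for a unique $g\in C^\infty(M,\Cm^n)$ and looking for $u = h(x,y) e^{ik\theta} \in \Omega_k$, formula \eqref{eq:mu} turns $\mu_- u = f$ into the equivalent equation
\[ \bar{\partial} H + A_{\bar{z}} H = G, \qquad H := h e^{k\lambda}, \quad G := g e^{(k+1)\lambda}, \]
where $h \mapsto H$ is a smooth bijection of $C^\infty(M,\Cm^n)$. Thus the existence of $u$ reduces to solving this matrix-valued $\bar{\partial}$-equation on $M$. I would handle it by the integrating-factor method: on the disk $M$ there exists a smooth invertible $R\colon M \to GL(n,\Cm)$ with $\bar{\partial} R + A_{\bar{z}} R = 0$. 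In the scalar case this is immediate from the Cauchy transform (take $R = e^{-\phi}$ with $\bar{\partial}\phi = A_{\bar{z}}$); in the genuinely matrix case one invokes the classical construction of matrix-valued holomorphic integrating factors on simply connected domains (e.g.\ \cite{ER}, or generalized analytic function theory of Vekua type). Substituting $H = RK$ collapses the equation to $\bar{\partial} K = R^{-1} G$, which is solved componentwise by the Cauchy--Green integral on $M$; undoing the substitutions yields $u \in \Omega_k$.

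For $\mu_+$, an analogous computation in isothermal coordinates (or by complex-conjugation symmetry of the Guillemin--Kazhdan operators, together with the relation $\mu_\pm^* = -\mu_\mp^{-A^*}$ already noted in the text) gives
\[ \mu_+(h e^{ik\theta}) = e^{(k-1)\lambda}\bigl(\partial(h e^{-k\lambda}) + A_z (h e^{-k\lambda})\bigr) e^{i(k+1)\theta}. \]
Applied with $v = h e^{i(k-2)\theta} \in \Omega_{k-2}$, the equation $\mu_+ v = f$ reduces to $\partial \tilde{H} + A_z \tilde{H} = \tilde{G}$ with $\tilde{H} := h e^{-(k-2)\lambda}$ and $\tilde{G} := g e^{(3-k)\lambda}$, solved by the same integrating-factor argument applied now to the operator $\partial$ (using an integrating factor $\tilde{R}$ with $\partial \tilde{R} + A_z \tilde{R} = 0$).

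The only genuinely nontrivial step I foresee is the construction of the matrix-valued invertible integrating factor $R$ on $M$: once this is in hand, the remainder of the proof is algebraic manipulation of \eqref{eq:mu} plus a scalar Cauchy--Green transform. Noncommutativity of $A_{\bar{z}}(z)$ as $z$ varies rules out the exponential shortcut $R = e^{-\Phi}$ available in the scalar case, so one must genuinely appeal to a two-dimensional construction (Fredholm continuation, or Vekua-type theory on the disk), and this is the input I would draw from \cite{ER}.
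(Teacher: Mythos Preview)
Your proposal is correct and follows essentially the same route as the paper: reduce via \eqref{eq:mu} to the equation $\bar{\partial}H + A_{\bar z}H = G$ on the disk, invoke a matrix integrating factor $R$ (the paper calls it $F$) with $\bar{\partial}R + A_{\bar z}R = 0$, and then solve the resulting scalar $\bar\partial$-equation by the Cauchy--Green transform. The only cosmetic difference is that the paper justifies the existence of $R$ primarily via holomorphic triviality of vector bundles over the disk (\cite{Foster91}), with \cite{ER,NakUhlmann2002} as alternatives, whereas you cite \cite{ER} directly; your explicit treatment of the $\mu_+$ case is more detailed than the paper's ``similarly''.
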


\begin{proof} We only prove the claim for $\mu_{-}$, the one for $\mu_{+}$ is proved similarly. If we write $f=ge^{i(k-1)\theta}$, using (\ref{eq:mu}) we see that we only need to find $h\in C^{\infty}(M,\mathbb{C}^n)$
such that
\begin{equation}
\bar{\partial}(he^{k\lambda})+A_{\bar{z}}he^{k\lambda}=e^{(1+k)\lambda}g.
\label{eq:preCR}
\end{equation}
But it is well known that there exists a smooth $F:M\to GL(n,\mathbb{C})$ such that $\bar{\partial}F+A_{\bar{z}}F=0$, hence the solvability
of (\ref{eq:preCR}) reduces immediately to the standard solvability result for the Cauchy-Riemann operator, namely, given a smooth $b$, there is $a$ such that $\bar{\partial}a=b$. The existence of $F$ above follows right away from the fact that a holomorphic vector bundle over the disk is holomorphically trivial \cite[Theorems 30.1 and 30.4]{Foster91}, see also \cite{ER,NakUhlmann2002} for alternative proofs. 
\end{proof}

\medskip

\noindent{\bf Hilbert transform and commutator formulas.} An important operator for what follows is the {\em fiberwise Hilbert transform} $H:L^2(SM,\mathbb{C}^{n})\to L^2(SM,\mathbb{C}^{n})$, diagonal on the harmonic decomposition in the fiber, and such that $H|_{H_k} = -i\sgn{k} Id|_{H_k}$, with the convention $\sgn{0} =0$. 

Using the splitting $X+A=\mu_{+}+\mu_{-}$, it is immediate to derive the commutator formulas (see \cite[Lemma 2.2]{Paternain2012} for instance)
\begin{align}
    \begin{split}
	[H, X + A] &= \pi_0 (X_\perp - A_V) + (X_\perp - A_V)\pi_0, \\
	[H,X_\perp -A_V] &= -(\pi_0 (X+A) + (X+A)\pi_0), \\
    \end{split}    
    \label{eq:commutatorA}
\end{align}
as well as the following identities, obtained by computing $[H^2,X+A]$ in two ways:
\begin{align}
  \pi_0 (X+A) = \pi_0 (X_\perp -A_V) H, \qquad (X+A) \pi_0 = - H(X_\perp -A_V) \pi_0.
  \label{eq:commutator2A}
\end{align}


\subsection{Decompositions of the data space} 

Denote $\rotpi (\x,v) = (\x,-v)$ the antipodal map (or rotation by $\pi$). A function $f$ is even/odd on $SM$ if $f\circ \rotpi  = +/- f$. Also define the {\em antipodal scattering relation} to be the mapping $\alpha_a:\partial (SM)\to \partial (SM)$
\begin{align*}
    \alpha_a = \alpha\circ \rotpi  = \rotpi  \circ \alpha \qquad (\alpha:\text{ scattering relation}).   
\end{align*}
$\alpha_a$ is an involution and $\alpha_a(\partial_\pm (SM))\subset \partial_\pm (SM)$. It is straightforward to see that the function $G(\x,v) := U_A(\x,-v)$ solves the transport problem
\begin{align*}
    (X+A) G = 0, \qquad G|_{\partial_+ SM} = C_A\circ \rotpi ,
\end{align*}
so that $G(\x,v) = U_A(\x,v) C_A (\rotpi (\varphi_-(\x,v)))$ for every $(\x,v)\in SM$. In particular, this implies the relation 
\begin{align*}
    U_A(\x,-v) = U_A(\x,v) C_A ( \varphi_+(\x,-v) ), \qquad (\x,v) \in SM.
\end{align*}
Writing this for $U_A(\x,v)$, we obtain the identity
\begin{align}
    C_A( \varphi_+(\x,v) )\  C_A( \varphi_+(\x,-v) ) = \Imm_n, \quad (\x,v)\in SM. 
    \label{eq:CAid}
\end{align}
We will use this to characterize the symmetries of the ray transforms over even and odd integrands.  In particular, the identity \eqref{eq:CAid} means that 
\begin{align*}
    C_A (\x,v)\ C_A( \alpha_a (\x,v)) = \Imm_n, \qquad (\x,v)\in \partial_- (SM).
\end{align*}
Note also the obvious identities
\begin{align}
    \begin{split}
	\varphi_t(\x,v) &= \rotpi ( \varphi_{\tau(\x,v)-t} (\alpha_a(\x,v)) ), \quad (\x,v)\in \partial_+ (SM), \quad t\in [0,\tau(\x,v)]. \\
	\tau(\x,v) &= \tau(\alpha_a(\x,v)), \qquad (\x,v)\in \partial_+ (SM).
    \end{split}    
    \label{eq:geoid}
\end{align}

\begin{lemma} \label{lem:sym}
    If $f\in C^\infty(SM)$ satisfies $f\circ \rotpi  = \pm f$, then the data $I_A f$ satisfies
    \begin{align*}
	I_A f(\alpha_a(\x,v)) = \pm C_A(\alpha(\x,v)) I_A f(\x,v).
    \end{align*}
\end{lemma}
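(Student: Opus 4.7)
The plan is to evaluate $I_A f(\alpha_a(\x,v))$ directly from the integral representation of $I_A$ and compare it to the integral defining $I_A f(\x,v)$ via a change of variable that reverses time along the geodesic. The input $(\x,v)$ lies in $\partial_+(SM)$, and since $\alpha_a$ preserves $\partial_+(SM)$, both sides of the claimed identity live on the correct space.

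First, I would use that $U_A(\x,v)=\Imm_n$ on $\partial_+(SM)$, so that $E_A^{-1}(\x,v,t)=U_A^{-1}(\varphi_t(\x,v))$ and hence
\[
I_A f(\x,v)=\int_0^{\tau(\x,v)} U_A^{-1}(\varphi_t(\x,v))\, f(\varphi_t(\x,v))\,dt.
\]
Applying this formula at $\alpha_a(\x,v)$, then performing the substitution $t=\tau(\x,v)-s$ while using both identities in \eqref{eq:geoid}, I obtain
\[
I_A f(\alpha_a(\x,v))=\int_0^{\tau(\x,v)} U_A^{-1}(\rotpi\varphi_t(\x,v))\, f(\rotpi\varphi_t(\x,v))\,dt,
\]
after which the hypothesis $f\circ \rotpi=\pm f$ pulls out the sign $\pm$.

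The remaining task is to relate $U_A^{-1}(\rotpi\varphi_t(\x,v))$ to $U_A^{-1}(\varphi_t(\x,v))$ through a matrix independent of $t$. For this I would invert the identity $U_A(\x,-v)=U_A(\x,v)\, C_A(\varphi_+(\x,-v))$ derived just above \eqref{eq:CAid}, apply it at the interior point $\varphi_t(\x,v)$, and observe the key geometric fact that reversing velocity at an interior point of a geodesic produces the time-reversed geodesic, so that $\varphi_+(\rotpi\varphi_t(\x,v))=\rotpi(\x,v)$, independently of $t$. This lets me extract the constant matrix factor $C_A^{-1}(\rotpi(\x,v))$ from the integral.

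Finally, I would invoke \eqref{eq:CAid} specialized to $(\x,v)\in \partial_+(SM)$: because $\tau(\x,-v)=0$, we have $\varphi_+(\x,-v)=\rotpi(\x,v)$, so \eqref{eq:CAid} reduces to $C_A(\alpha(\x,v))\,C_A(\rotpi(\x,v))=\Imm_n$. Hence $C_A^{-1}(\rotpi(\x,v))=C_A(\alpha(\x,v))$, which yields the claimed identity. The only mildly delicate moment is the geometric identification $\varphi_+(\rotpi\varphi_t(\x,v))=\rotpi(\x,v)$, and the fact that \eqref{eq:CAid} does collapse into the desired form once $(\x,v)$ is pushed to the boundary; the rest is careful but routine bookkeeping.
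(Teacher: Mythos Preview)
Your proof is correct and follows essentially the same route as the paper: write the integral for $I_A f(\alpha_a(\x,v))$, use \eqref{eq:geoid} together with the substitution $t\mapsto \tau-t$, apply the parity assumption on $f$, then invoke the identity $U_A(\x,-v)=U_A(\x,v)C_A(\varphi_+(\x,-v))$ at the interior point $\varphi_t(\x,v)$ to extract the constant matrix $C_A^{-1}(\rotpi(\x,v))$, which \eqref{eq:CAid} identifies with $C_A(\alpha(\x,v))$. Your explicit justification of the geometric fact $\varphi_+(\rotpi\varphi_t(\x,v))=\rotpi(\x,v)$ and of the boundary specialization of \eqref{eq:CAid} simply makes transparent what the paper leaves implicit.
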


\begin{proof} We only treat the case of $f$ even, the odd case being similar. We write
    \begin{align*}
	I_A f(\alpha_a(\x,v)) &= \int_0^{\tau(\alpha_a(\x,v))} U_A^{-1} (\varphi_t(\alpha_a(\x,v))) f(\varphi_t(\alpha_a(\x,v)))\ dt \\
	&\stackrel{\eqref{eq:geoid}}{=} \int_0^{\tau(\x,v)} U_A^{-1} (\rotpi ( \varphi_{\tau-t}(\x,v))) f(\rotpi ( \varphi_{\tau-t}(\x,v))) \ dt \\
	&\stackrel{u=\tau-t}{=} \int_0^{\tau(\x,v)} (U_A (\rotpi  \varphi_u (\x,v)))^{-1} f(\varphi_{u}(\x,v))\ du \\
	&= (C_A(\rotpi (\x,v)))^{-1} I_A f(\x,v),
    \end{align*}
    and the identity follows since $C_A(\rotpi (\x,v))^{-1} = C_A(\alpha(\x,v))$ by \eqref{eq:CAid}. The proof is complete.
\end{proof}
This motivates a decomposition of $C^\infty (\partial_+ (SM)) = \V_{A,+} \oplus \V_{A,-}$, where we define 
\begin{align*}
    \V_{A,\pm} := \{ h\in C^\infty (\partial_+ (SM)):\quad h(\alpha_a(\x,v)) = \pm C_A(\alpha(\x,v)) h(\x,v) \}.
\end{align*}
This decomposition is unique and given explicitely by $h = h_{A,+} + h_{A,-}$ with 
\begin{align*}
    h_{A,\pm}(\x,v) = \frac{1}{2} \left(h(\alpha_a(\x,v)) \pm C_A^{-1}(\alpha(\x,v)) h(\x,v) \right), \qquad (\x,v)\in \partial_+ (SM).
\end{align*}
Such symmetries, via extension as first integrals of $X+A$, generate even and odd functions on $SM$:

\begin{lemma}    \label{lem:sym2}
    If $h\in \V_{A,+}$ ($\V_{A,-}$) then the function $h_{\psi,A}$ is even (odd) on $SM$.
\end{lemma}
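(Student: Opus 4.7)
The plan is to work directly with the representation $h_{\psi,A}(\x,v) = U_A(\x,v)\,h(\varphi_-(\x,v))$ recalled in the Preliminaries, and to track how each of the three ingredients transforms under the velocity flip $v \mapsto -v$. Once the right pieces are in place, the argument reduces to an algebraic bookkeeping exercise.

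First I would record the geometric identity
\[\varphi_-(\x,-v) \;=\; \rotpi(\varphi_+(\x,v)) \;=\; \alpha_a(\varphi_-(\x,v)), \qquad (\x,v)\in SM.\]
This expresses the fact that the inward-pointing past endpoint of the geodesic through $(\x,-v)$ is the antipode of the outward-pointing future endpoint of the geodesic through $(\x,v)$, and follows at once from the time-reversal relation between these two geodesics. The interior case reduces to the boundary case since $\varphi_-$ and $\varphi_+$ are constant along each geodesic orbit, and the boundary case can alternatively be read off from \eqref{eq:geoid} at $t=0$.

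Next I would apply the defining symmetry of $\V_{A,\pm}$ at the point $(\x',v') := \varphi_-(\x,v) \in \partial_+(SM)$, noting that $\alpha(\x',v') = \varphi_+(\x,v)$, to obtain
\[h(\varphi_-(\x,-v)) \;=\; h(\alpha_a(\varphi_-(\x,v))) \;=\; \pm\, C_A(\varphi_+(\x,v))\, h(\varphi_-(\x,v)).\]
Finally, I would insert the pointwise identity $U_A(\x,-v) = U_A(\x,v)\, C_A(\varphi_+(\x,-v))$ established in the text just before \eqref{eq:CAid}, and combine it with the identity $C_A(\varphi_+(\x,-v))\, C_A(\varphi_+(\x,v)) = \Imm_n$ from \eqref{eq:CAid} itself. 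The two $C_A$ factors then telescope to the identity matrix and the computation collapses to $h_{\psi,A}(\x,-v) = \pm\, U_A(\x,v)\, h(\varphi_-(\x,v)) = \pm\, h_{\psi,A}(\x,v)$, which is the claim.

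There is no genuinely hard step here: the whole argument is a three-line cancellation once Step~1 and the relation \eqref{eq:CAid} are in hand. The only thing that deserves a careful check is the geometric identity at boundary points of $SM$, where $\varphi_-$ agrees with the identity on $\partial_+(SM)$ and with $\alpha$ on $\partial_-(SM)$; but this reduces to the elementary observation that reversing the initial velocity reverses the geodesic.
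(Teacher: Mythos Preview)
Your proof is correct and follows essentially the same approach as the paper's: both start from the representation $h_{\psi,A}(\x,v) = U_A(\x,v)\,h(\varphi_-(\x,v))$, apply the identity $U_A(\x,-v) = U_A(\x,v)\,C_A(\varphi_+(\x,-v))$ together with the geometric relation $\varphi_-(\x,-v) = \alpha_a(\varphi_-(\x,v))$, invoke the defining symmetry of $\V_{A,\pm}$, and collapse the two $C_A$ factors via \eqref{eq:CAid}. The only cosmetic difference is that you write $\varphi_+(\x,v)$ where the paper writes $\alpha(\varphi_-(\x,v))$ and $\rotpi(\varphi_-(\x,v))$, but these are the same objects.
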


\begin{proof} Suppose $h\in \V_{A,+}$ (the case of $\V_{A,-}$ is similar). Then, for any $(\x,v)\in SM$,
    \begin{align*}
	h_{\psi,A}(\x,-v) &= U_A(\x,-v) h(\varphi_-(\x,-v)) \\
	&= U_A(\x,v) C_A(\rotpi  (\varphi_- (\x,v))) h(\alpha_a(\varphi_-(\x,v))) \\
	&= U_A(\x,v) C_A(\rotpi  (\varphi_- (\x,v))) C_A (\alpha (\varphi_-(\x,v))) h(\varphi_-(\x,v)) \\
	&= U_A(\x,v) \cdot \Imm_n \cdot h(\varphi_-(\x,v)) \\
	&= h_{\psi,A}(\x,v),
    \end{align*} 
    hence the proof.
\end{proof}

In general, the decomposition $\V_{A,+} \oplus \V_{A,-}$ is not orthogonal in $L^2_\mu (\partial_+ SM)$. In this context of non-Hermitian connections, the more natural relation is the following. 

\begin{lemma}\label{lem:orthogonal}
    For any $GL(n,\mathbb{C})$-connection $A$, the following decompositions hold, orthogonal in the $L^2_\mu (\partial_+(SM))$ sense:
    \begin{align*}
	C^\infty(\partial_+(SM)) = \V_{A,+} \stackrel{\perp}{\oplus} \V_{\Aadj,-} = \V_{\Aadj,+} \stackrel{\perp}{\oplus} \V_{A,-}.
    \end{align*}
    In particular, if the connection is unitary $A = \Aadj$, then $C^\infty(\partial_+(SM)) = \V_{A,+} \stackrel{\perp}{\oplus} \V_{A,-}$.
\end{lemma}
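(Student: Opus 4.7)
The core of the proof is to encode the symmetry conditions defining $\V_{A,\pm}$ and $\V_{\Aadj,\pm}$ as $\pm 1$-eigenvalue equations for two mutually adjoint involutions on $L^2_\mu(\partial_+(SM),\Cm^n)$, so that orthogonality and the resulting direct sum decomposition follow from a standard Hilbert-space argument. Throughout I rely on the identity $\alpha\circ\alpha_a = \rotpi$ on $\partial_+(SM)$, the relation $C_A(\alpha(\x,v))\,C_A(\rotpi(\x,v))=\Imm_n$ (a consequence of \eqref{eq:CAid}), the $\alpha_a$-invariance of the Liouville measure $\mu\,d\Sigma^2$ on $\partial_+(SM)$, and the identity $C_A^*=C_{\Aadj}^{-1}$ on $\partial_-(SM)$ coming from \eqref{eq:UUstar}.

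Define the bounded operators $Th := C_A(\alpha)^{-1}(h\circ\alpha_a)$ and $Sh := C_{\Aadj}(\alpha)^{-1}(h\circ\alpha_a)$ on $L^2_\mu(\partial_+(SM),\Cm^n)$. The identities above give $T^2=S^2=I$, so $\V_{A,\pm}$ and $\V_{\Aadj,\pm}$ are exactly the smooth parts of the $\pm 1$-eigenspaces of $T$ and $S$. Changing variable $(\x,v)\mapsto\alpha_a(\x,v)$ in the integral $(Th,k)_{L^2_\mu}$, the Liouville invariance and the pointwise relation $C_A(\alpha)^* = C_{\Aadj}(\alpha)^{-1}$ combine to give $T^*=S$. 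Orthogonality of the eigenspaces is then immediate: for $h_1\in\V_{A,+}$ and $h_2\in\V_{\Aadj,-}$,
\begin{align*}
(h_1,h_2)_{L^2_\mu} = (Th_1,h_2)_{L^2_\mu} = (h_1,T^*h_2)_{L^2_\mu} = (h_1,Sh_2)_{L^2_\mu} = -(h_1,h_2)_{L^2_\mu},
\end{align*}
forcing $(h_1,h_2)_{L^2_\mu}=0$.

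For the spanning statement, I invoke the general Hilbert-space identity $\ker(T-I)^\perp = \overline{\mathrm{Im}(T^*-I)}$; since $T^*=S$ is itself an involution, one has $\mathrm{Im}(S-I)=\ker(S+I)$, which is already closed. Consequently $\ker(T-I)^\perp = \ker(S+I)$ and
\begin{align*}
L^2_\mu(\partial_+(SM),\Cm^n) = \ker(T-I)\oplus^{\perp}\ker(S+I),
\end{align*}
which at the $L^2_\mu$ level is the claimed decomposition. The second decomposition $\V_{\Aadj,+}\oplus^\perp \V_{A,-}$ follows by interchanging $A\leftrightarrow\Aadj$, and when $A=\Aadj$ the two decompositions coincide.

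The one delicate point is upgrading the $L^2_\mu$-orthogonal splitting to one of smooth functions: given $h\in C^\infty$, the components $h_1\in\ker(T-I)$ and $h_2=h-h_1\in\ker(S+I)$ are uniquely characterised by the conditions $Th_1=h_1$ and $(I+S)h_1=(I+S)h$ (with $(I+S)$ injective on $\ker(T-I)$ by Step 3), and one must argue $h_1,h_2\in C^\infty$. This is the main obstacle, since the $L^2_\mu$-orthogonal projection onto $\ker(T-I)$ admits no obvious pointwise formula unless $C_A(\alpha)+C_{\Aadj}(\alpha)$ is invertible everywhere, in which case solving the linear system $h=h_1+h_2$ with $h_1(\alpha_a)=C_A(\alpha)h_1$ and $h_2(\alpha_a)=-C_{\Aadj}(\alpha)h_2$ gives the explicit smooth formula $h_1 = (C_A(\alpha)+C_{\Aadj}(\alpha))^{-1}(h\circ\alpha_a + C_{\Aadj}(\alpha)h)$. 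The general case is handled either by verifying this pointwise invertibility from properties of parallel transport, or by a direct regularity argument showing that $T$- and $S$-preservation of $C^\infty$ transfers to the projections.
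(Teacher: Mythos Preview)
Your orthogonality argument is correct and takes a genuinely different route from the paper. The paper extends $h\in\V_{A,+}$ and $w\in\V_{\Aadj,-}$ to first integrals $h_{\psi,A}$, $w_{\psi,\Aadj}$ on $SM$, applies Santal\'o's formula, and then uses Lemma~\ref{lem:sym2} (even/odd parity of these extensions) to make the $SM$-integral vanish. Your adjoint computation $T^*=S$ via the change of variable $(\x,v)\mapsto\alpha_a(\x,v)$ on $\partial_+(SM)$ stays entirely on the boundary and is more elementary; it avoids Santal\'o and the extension lemma altogether. Both approaches are short, but yours isolates the algebraic reason for orthogonality more transparently.

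However, your proof has a genuine gap at the smoothness step, and you acknowledge it without closing it. The pointwise invertibility you flag as ``the main obstacle'' is in fact automatic and is precisely the observation the paper uses: since $C_{\Aadj}=(C_A^*)^{-1}$ by \eqref{eq:UUstar}, one has
\[
C_A^*(\alpha)\bigl(C_A(\alpha)+C_{\Aadj}(\alpha)\bigr)=C_A^*(\alpha)C_A(\alpha)+\Imm_n,
\]
which is positive definite, hence invertible, at every point. Thus your explicit formula $h_1=(C_A(\alpha)+C_{\Aadj}(\alpha))^{-1}(h\circ\alpha_a+C_{\Aadj}(\alpha)h)$ is always well-defined and smooth; after left-multiplying by $C_A^*(\alpha)$ it coincides with the paper's formula $h_+=(\Imm_n+C_A^*C_A(\alpha))^{-1}(h+C_A^*(\alpha)\,h\circ\alpha_a)$. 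Once you insert this one line, your argument is complete. Leaving it as ``handled either by verifying this pointwise invertibility \ldots\ or by a direct regularity argument'' is not acceptable, since the second alternative is not obvious and the first is a one-line computation you should simply carry out.
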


\begin{proof} It is enough to prove the first equality, as the second follows by considering the connection $\Aadj$. \\
    \noindent{\bf Uniqueness.} For $h\in C^\infty(\partial_+ (SM))$, a unique decomposition $h = h_+ + h_- \in \V_{A,+} \oplus \V_{\Aadj,-}$ is given by:
    \begin{align*}
	h_+(\x,v) &= \big(\Imm_n + C_A^* C_A(\alpha(\x,v))\big)^{-1} \Big(h(\x,v) + C_A^*(\alpha(\x,v))h(\alpha_a(\x,v))\Big), \\
	h_-(\x,v) &= \big(\Imm_n + C_A^{-1} (C_A^{-1})^* (\alpha(\x,v))\big)^{-1} \Big(h(\x,v) - C_A^{-1}(\alpha(\x,v))h(\alpha_a(\x,v))\Big),
    \end{align*}
    obtained by fulfilling the conditions of $\V_{A,+}$ and $\V_{\Aadj,-}$ and using that $C_{-A^*} = (C_A^*)^{-1}$ by virtue of \eqref{eq:UUstar}. 

    \noindent{\bf Orthogonality.} Let $h\in \V_{A,+}$ and $w\in \V_{\Aadj, -}$. We write
    \begin{align*}
	\int_{\partial_+ (SM)} \dprod{h}{w} (\x,v) \mu\ d\Sigma^2 &= \int_{\partial_+ (SM)} \frac{1}{\tau(\x,v)} \int_0^{\tau(\x,v)} \dprod{h}{w}(\varphi_- (\varphi_t(\x,v)))\ dt\ \mu\ d\Sigma^2.
    \end{align*}  
    We then write
    \begin{align*}
	\dprod{h(\varphi_- (\varphi_t(\x,v)))}{w(\varphi_-(\varphi_t(\x,v)))} &= \dprod{U_A^{-1} h_{\psi,A}}{U_{\Aadj}^{-1} w_{\psi, \Aadj}} (\varphi_t(\x,v)) \\
	&= \dprod{U_{\Aadj}^* h_{\psi,A}}{U_{\Aadj}^{-1} w_{\psi, \Aadj}} (\varphi_t(\x,v)) \\
	&= \dprod{h_{\psi,A}}{w_{\psi,\Aadj}}(\varphi_t(\x,v)).
    \end{align*}
    Plugging this last expression into the first equality and applying Santalo's formula, we arrive at 
    \begin{align*}
	\int_{\partial_+ (SM)} \dprod{h}{w} (\x,v) \mu\ d\Sigma^2 = \int_{SM} \frac{1}{\tau\circ \varphi_-} \dprod{h_{\psi,A}}{w_{\psi, \Aadj}}\ d\Sigma^3 = 0,
    \end{align*}
    since $\tau\circ \varphi_-$ is even in $v$ and by virtue of Lemma \ref{lem:sym2}, $h_{\psi,A}$ is even in $v$ and $w_{\psi,\Aadj}$ is odd in $v$. The lemma is proved.
\end{proof}

\subsection{Boundary operators} \label{sec:boundaryop}

Extending notation from \cite{Paternain2013a}, we define for $w\in C(\partial_+ (SM), \Cm^n)$
\begin{align}
    Q_{A,\pm} w(\x,v) := \left\lbrace
    \begin{array}{lr}
	w(\x,v) & \text{if } (\x,v) \in \partial_+ (SM), \\
	\pm C_A(\x,v) w\circ\alpha (\x,v) & \text{if } (\x,v) \in \partial_- (SM).
    \end{array}
    \right.
    \label{eq:defQ}
\end{align}
Note that $Q_+ w\in C(\partial(SM), \Cm^n)$. We also define
\begin{align}
    \begin{split}
	{\cal S}_A^\infty (\partial_+ (SM),\Cm^n) &:= \{ w\in C^\infty(\partial_+ (SM)): w_{A,\psi} \in C^\infty(SM,\Cm^n) \} \\
	&= \{ w\in C^\infty(\partial_+ (SM)): Q_{A,+}w \in C^\infty(\partial(SM),\Cm^n) \},	
    \end{split}    
    \label{eq:Sinf}
\end{align}
where the second equality is established in \cite[Lemma 5.1]{Paternain2013a}. This is due in part to the fact that 
\begin{align*}
    h_{\psi,A}|_{\partial SM} = Q_{A,+} h, \qquad h\in C^\infty(\partial_+ (SM)),
\end{align*}
so that the operator $Q_{A,+}:{\cal S}_A^\infty (\partial_+ (SM),\Cm^n)\to C^\infty(\partial_+ (SM),\Cm^n)$ makes sense. We also introduce $B_{A,\pm}:C^\infty(\partial (SM),\Cm^n)\to C^\infty(\partial_+ (SM),\Cm^n)$, 
\begin{align}
    B_{A,\pm} g (\x,v) = g(\x,v) \pm C_A^{-1} (\alpha(\x,v)) g(\alpha(\x,v)), \qquad (\x,v)\in \partial_+ (SM).
    \label{eq:defB}
\end{align}
(note the sign difference with \cite{Paternain2013a}). $B_{A,-}$ appears naturally in the fundamental theorem of calculus along a geodesic: for $(\x,v)\in \partial_+ (SM)$,
\begin{align*}
    I_A [(X+A) u] (\x,v) &= \int_0^{\tau(\x,v)} U_A^{-1} (\varphi_t(\x,v)) (X+A)u (\varphi_t(\x,v))\ dt \\
    &= \int_0^{\tau(\x,v)} X( U_A^{-1} u ) (\varphi_t(\x,v))\ dt \\
    &= \left[ U_A^{-1} u(\varphi_t(\x,v)) \right]_0^{\tau(\x,v)},
\end{align*}
so that 
\[ I_A [(X+A)u] = - B_{A,-} u|_{\partial (SM)}. \]

We state without proof the following straightforward claims.

\begin{lemma}\label{lem:claims}
    \begin{itemize}
	\item[$(i)$] If $h\in \V_{A,+}$, then $Q_{A,+} h$ is even in $v$ and $Q_{A,-} h$ is odd in $v$. 
	\item[$(ii)$] If $h\in \V_{A,-}$, then $Q_{A,+} h$ is odd in $v$ and $Q_{A,-} h$ is even in $v$. 
	\item[$(iii)$] If $q\in C^\infty(\partial (SM),\Cm^n)$ is even in $v$, then $B_{A,\pm} q \in \V_{A,\pm}$.
	\item[$(iv)$] If $q\in C^\infty(\partial (SM), \Cm^n)$ is odd in $v$, then $B_{A,\pm} q \in \V_{A,\mp}$.
    \end{itemize}    
\end{lemma}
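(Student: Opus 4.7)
The plan is to reduce the entire lemma to one boundary identity and careful bookkeeping with the three involutions $\alpha$, $\rotpi$, $\alpha_a$. The key observation is that if $(\x,v)\in\partial_+(SM)$ then $(\x,-v)\in\partial_-(SM)$, so $\tau(\x,-v)=0$ and $\varphi_+(\x,-v)=(\x,-v)$; consequently \eqref{eq:CAid} collapses to
\[ C_A(\alpha(\x,v))\,C_A(\x,-v)=\Imm_n, \qquad (\x,v)\in\partial_+(SM), \]
so that $C_A(\x,-v)$ and $C_A(\alpha(\x,v))$ are mutually inverse. Together with the relations $\alpha_a=\alpha\circ\rotpi=\rotpi\circ\alpha$ (which imply $\alpha(\x,-v)=\alpha_a(\x,v)$ and $\alpha\circ\alpha_a=\rotpi$), this is essentially the only nontrivial ingredient.

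For (i)--(ii), fix $(\x,v)\in \partial_+(SM)$ and $h\in \V_{A,\pm}$. The definition \eqref{eq:defQ} at $(\x,-v)\in \partial_-(SM)$ reads
\[ Q_{A,\pm}h(\x,-v) \;=\; \pm\, C_A(\x,-v)\,h(\alpha(\x,-v)) \;=\; \pm\, C_A(\x,-v)\,h(\alpha_a(\x,v)). \]
Substituting the defining relation of $\V_{A,\pm}$, namely $h(\alpha_a(\x,v))=\pm C_A(\alpha(\x,v))h(\x,v)$, and then invoking the boundary identity to collapse $C_A(\x,-v)C_A(\alpha(\x,v))=\Imm_n$, one obtains $Q_{A,\pm}h(\x,-v)=\pm h(\x,v)=\pm Q_{A,\pm}h(\x,v)$, which gives the stated parities.

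For (iii)--(iv), let $q\in C^\infty(\partial(SM),\Cm^n)$ have parity $\epsilon\in\{\pm 1\}$ under $\rotpi$, fix $(\x,v)\in \partial_+(SM)$ and set $(\x',v'):=\alpha_a(\x,v)\in\partial_+(SM)$. From the involution relations $\alpha(\x',v')=(\x,-v)$, and parity of $q$ yields $q(\x',v')=\epsilon\, q(\alpha(\x,v))$ and $q(\alpha(\x',v'))=\epsilon\, q(\x,v)$. Substituting these into \eqref{eq:defB} for $B_{A,\pm}q(\x',v')$, and using the boundary identity to rewrite $C_A^{-1}(\x,-v)=C_A(\alpha(\x,v))$, one arrives at
\[ B_{A,\pm}q(\alpha_a(\x,v)) \;=\; \pm\,\epsilon\,C_A(\alpha(\x,v))\,B_{A,\pm}q(\x,v), \]
which is exactly the defining relation of $\V_{A,\pm\epsilon}$. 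This is (iii) when $\epsilon=+1$ and (iv) when $\epsilon=-1$.

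No real obstacle arises: the lemma is purely algebraic, and the only risk is sign confusion among $\alpha$, $\rotpi$, and $\alpha_a$ when evaluating at the two endpoints of each geodesic. Anchoring every computation on the single identity $C_A(\x,-v)C_A(\alpha(\x,v))=\Imm_n$ and consistently phrasing everything through $\alpha_a$ (the involution preserving $\partial_+(SM)$) keeps the bookkeeping tight.
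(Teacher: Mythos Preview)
Your argument is correct. The paper itself omits the proof entirely, calling the claims ``straightforward,'' so there is no approach to compare against; your reduction of everything to the single boundary identity $C_A(\x,-v)\,C_A(\alpha(\x,v))=\Imm_n$ together with $\alpha\circ\alpha_a=\rotpi$ is exactly the right bookkeeping.

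One cosmetic point in your treatment of (i)--(ii): you use the symbol $\pm$ simultaneously for the sign indexing $Q_{A,\pm}$ and for the sign indexing $\V_{A,\pm}$, and your displayed conclusion $Q_{A,\pm}h(\x,-v)=\pm h(\x,v)$ does not distinguish them. If you track the two signs separately, say $\tau$ for $Q_{A,\tau}$ and $\sigma$ for $h\in\V_{A,\sigma}$, your computation actually yields $Q_{A,\tau}h(\x,-v)=\tau\sigma\,Q_{A,\tau}h(\x,v)$, and the product $\tau\sigma$ gives precisely the four parities claimed in (i)--(ii). This is purely notational and does not affect the validity of the proof.
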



\section{Inversion formulas and control of the error operators}\label{sec:inversion}

For $f:M\to \Cm^n$, $f\circ\pi:SM\to \Cm^n$ is an even function of $v$ and $(X_\perp - A_V)f$ is odd in $v$, hence from Lemma \ref{lem:sym} we have
\begin{align*}
  \text{Range } I_{A,0} \subset \V_{A,+}, \qquad \text{Range } I_{A,\perp} \subset \V_{A,-}.    
\end{align*}
We will see below that it is somehow natural to consider the inversion of operators $I_{A,0}$, $I_{A,\perp}$, $I_{\Aadj, 0}$ and $I_{\Aadj, \perp}$ together. 

\subsection{Adjoints}
For $f\in C^\infty(M,\Cm^n)$ and $h\in C^\infty(\partial_+ (SM), \Cm^n)$, we compute (denote $\dprod{\cdot}{\cdot}$ the Hermitian product on $\Cm^n$),
\begin{align*}
    \int_{\partial_+ SM} \dprod{h(\x,v)}{I_{A,0}f(\x,v)}\ \mu d\Sigma^2 &= \int_{\partial_+ SM} \int_0^{\tau(\x,v)} \dprod{h(\x,v)}{U_A^{-1}(\varphi_t(\x,v))f(\varphi_t(\x,v))}\ dt\ \mu d\Sigma^2 \\
    &\stackrel{\eqref{eq:UUstar}}{=} \int_{\partial_+ SM} \int_0^{\tau(\x,v)} \dprod{h(\x,v)}{U_{\Aadj}^*(\varphi_t(\x,v))f(\varphi_t(\x,v))}\ dt\ \mu d\Sigma^2 \\
  &= \int_{\partial_+ SM} \int_0^{\tau(\x,v)} \dprod{ U_{\Aadj}(\varphi_t(\x,v)) h(\x,v)}{f(\varphi_t(\x,v))}\ dt\ \mu d\Sigma^2 \\
  &= \int_{\partial_+ SM} \int_0^{\tau(\x,v)} \dprod{ h_{\psi,\Aadj} (\varphi_t(\x,v))}{f(\varphi_t(\x,v))}\ dt\ \mu d\Sigma^2 \\
  &= \int_{SM} \dprod{h_{\psi,\Aadj}}{f}\ d\Sigma^3 \qquad\qquad (\text{by Santal\'o's formula}) \\
  &= 2\pi \int_M \dprod{\pi_0 h_{\psi,\Aadj}}{f}\ dM. 
\end{align*}
So we deduce that 
\begin{align}
    I_{A,0}^* h = (2\pi) \pi_0 h_{\psi,\Aadj}.
    \label{eq:IA0star}
\end{align}
A similar argument with $f\in C^\infty_0(SM)$, and using the fact that $X_\perp^* = -X_\perp$ when either function in the $L^2(SM,\Cm^n)$ inner product vanishes at the boundary, yields that 
\begin{align}
  I_{A,\perp}^* h = -(2\pi) \pi_0 (X_\perp +A^{*}_V) h_{\psi,\Aadj}.
  \label{eq:IAperpstar}
\end{align}

A direct use of Lemma \ref{lem:sym2} and inspection on symmetries yields the following 

\begin{lemma} \label{lem:adjoints}
  \[ I_{\Aadj, 0}^* (\V_{A,-}) = \{0\} \quad \text{ and } \quad I_{\Aadj, \perp}^* (\V_{A,+}) = \{0\}. \] 
\end{lemma}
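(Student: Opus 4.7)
The plan is to read off the result directly from the explicit formulas \eqref{eq:IA0star}--\eqref{eq:IAperpstar} together with the parity statement of Lemma \ref{lem:sym2}, after exchanging the roles of $A$ and $\Aadj$. Substituting $\Aadj$ for $A$ in \eqref{eq:IA0star}--\eqref{eq:IAperpstar} and using $(\Aadj)^*=A$, the two adjoints in question become
\[
I_{\Aadj,0}^*\,h \;=\; 2\pi\,\pi_0\,h_{\psi,A}, \qquad I_{\Aadj,\perp}^*\,h \;=\; -2\pi\,\pi_0\,(X_\perp+A_V)\,h_{\psi,A}.
\]
So both identities reduce to a parity check: the fiberwise average $\pi_0$ annihilates any function of $v$ that is odd under the antipodal map $\rho_\pi$.

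For the first identity, if $h\in\V_{A,-}$ then Lemma \ref{lem:sym2} asserts that $h_{\psi,A}$ is odd in $v$, so $\pi_0 h_{\psi,A}=0$, which is exactly $I_{\Aadj,0}^*(\V_{A,-})=\{0\}$. No further work is needed.

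For the second identity I would verify that the operator $X_\perp+A_V$ is \emph{odd} with respect to $\rho_\pi$, i.e.\ maps functions even in $v$ to functions odd in $v$. Since $\rho_\pi^*X=-X$ and $\rho_\pi^*V=V$, one has $\rho_\pi^*X_\perp=\rho_\pi^*[X,V]=-X_\perp$. Moreover, $A$ is linear in $v$, so $A\circ\rho_\pi=-A$, and because $V$ commutes with $\rho_\pi$ we also get $A_V\circ\rho_\pi=-A_V$. Now if $h\in\V_{A,+}$, Lemma \ref{lem:sym2} gives that $h_{\psi,A}$ is even in $v$, hence $(X_\perp+A_V)h_{\psi,A}$ is odd in $v$, and again $\pi_0$ kills it, yielding $I_{\Aadj,\perp}^*(\V_{A,+})=\{0\}$.

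There is no genuine obstacle here: once the explicit backprojection formulas \eqref{eq:IA0star}--\eqref{eq:IAperpstar} are in hand and the parity of $h_{\psi,A}$ has been pinned down in Lemma \ref{lem:sym2}, the only item requiring verification is the odd parity of $A_V$ under $\rho_\pi$, which is immediate from the linearity of $A$ in $v$ and the commutation $[V,\rho_\pi]=0$.
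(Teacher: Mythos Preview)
Your approach is exactly what the paper intends: it states only that the lemma follows from ``a direct use of Lemma \ref{lem:sym2} and inspection on symmetries,'' and you have written out precisely that inspection. One small slip: substituting $-A^*$ for $A$ in \eqref{eq:IAperpstar} gives $I_{-A^*,\perp}^*h=-2\pi\,\pi_0(X_\perp-A_V)h_{\psi,A}$, not $+A_V$ (since $(-A^*)^*_V=-A_V$); this is harmless for the argument, because both $X_\perp$ and $A_V$ reverse parity under $\rho_\pi$, so either sign yields an odd function and $\pi_0$ annihilates it.
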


\subsection{Fredholm equations for $I_{A,0}$ and $I_{A,\perp}$ - proof of Theorem \ref{thm:Fred}}

As discussed in the introduction, let us define the operators 
\begin{align}
    \begin{split}
	W_A f &:= \pi_0 (X_\perp - A_V) u_A^f, \qquad f\in C^\infty(M,\Cm^n), \\
	W_{A,\perp} f &:= \pi_0 u_A^{(X_\perp - A_V) f}, \qquad f\in C_0^\infty(M,\Cm^n).
    \end{split}    
    \label{eq:W}
\end{align}

We now prove Theorem \ref{thm:Fred} before studying the operators $W_{A}$ and $W_{A,\perp}$ further. For the proof below, let us make the comment that, in terms of solutions of elementary transport problems of the form $u_A^f$ and $h_{A,\psi}$ defined in Section \ref{sec:prelim}, the solution to the problem 
\begin{align*}
    (X+A)u = -f \quad (SM), \quad u|_{\partial_+ (SM)} = w,
\end{align*}
is $u = u_A^f + (w - I_{A}f)_{\psi,A}$, and the solution to the problem 
\begin{align*}
    (X+A)u = -f \quad (SM), \quad u|_{\partial_- (SM)} = h,
\end{align*}
is $u = u_A^f + ((C_A^{-1} h)\circ \alpha)_{\psi, A}$.

\begin{proof}[Proof of Theorem \ref{thm:Fred}] Here and below, for a function $u(\x,v)$ defined on $SM$, we write the even/odd decomposition with respect to $v$ as $u = u_+ + u_-$, where $u_{\pm}(\x,v):= (u(\x,v)\pm u(\x,-v))/2$. \\ 
    {\bf Inversion of $I_{A,0}$ (Proof of \eqref{eq:FredWA}).} Start from the equation 
    \begin{align*}
	(X+A) u_A^f = -f\quad (SM), \qquad u_A^f|_{\partial_- (SM)} = 0,    
    \end{align*}
    so that $u_A^f|_{\partial_+ (SM)} = I_{A,0} f$. Direct application of the left equation of \eqref{eq:commutator2A} to the transport equation gives
    \begin{align}
	f = \pi_0 f = -\pi_0 (X+ A) u_A^f = - \pi_0 (X_\perp -A_V) H u_A^f = - \pi_0 (X_\perp -A_V) H u_{A,-}^f,
	\label{eq:frcA}
    \end{align} 
    where the last step comes from the fact that $\pi_0 (X_\perp - A_V) H u_{A,+}^f = 0$. It now remains to write a transport problem for $Hu_{A,-}^f$, for which we use the formula for $[H,X+A]$:
    \begin{align*}
	(X+A)Hu_A^f &= \cancel{H(X+A) u_A^f} - [H,X+A]u_A^f \\   
	&= - \pi_0 (X_\perp -A_V)u_A^f - (X_\perp -A_V)\pi_0 u_A^f,  
    \end{align*}
    which upon projecting onto even harmonics, yields 
    \begin{align}
	(X+A) H u_{A,-}^f = - W_A f, \where\quad W_A f:= \pi_0 (X_\perp-A_V) u_A^f. 
	\label{eq:transWf}
    \end{align}
    This equation gives us $Hu_{A,-}^f = u_A^{W_Af} + h_{\psi,A}$, where 
    \begin{align*}
	h = (C_A^{-1} (H u_{A,-}^f|_{\partial_- (SM)}))\circ \alpha = \frac{1}{2} (B_{A,+} - B_{A,-}) (Hu_{A,-}^f|_{\partial (SM)}) = \frac{1}{4} (B_{A,+} - B_{A,-}) H Q_{A,-} I_{A,0}f.
    \end{align*}    
    Plugging this expression of $Hu_{A,-}^f$ back into \eqref{eq:frcA} yields the formula
    \begin{align*}
	f + W_A^2 f &= - \pi_0 (X_\perp-A_V) h_{\psi, A} \\
	&= \frac{1}{2\pi} I_{\Aadj,\perp}^* h \\
	&= \frac{1}{8\pi} I_{\Aadj,\perp}^* (B_{A,+} - B_{A,-}) H Q_{A,-} I_{A,0}f.    
    \end{align*}
    Finally, inspection of symmetries shows that $B_{A,-}HQ_{A,-} I_{A,0} f \in \V_{A,+}$, so that using Lemma \ref{lem:sym2} and the expression of $I_{\Aadj,\perp}^*$, it is annihilated by $I_{\Aadj,\perp}^*$. As a conclusion, we arrive at \eqref{eq:FredWA}. \\

    \noindent{\bf Inversion of $I_{A,\perp}$ (Proof of \eqref{eq:FredWAstar}).} Start from the equation 
    \begin{align*}
	(X+A) u_A^{(X_\perp -A_V)f} = -(X_\perp-A_V)f\quad (SM), \qquad u_A^{(X_\perp -A_V)f}|_{\partial_- (SM)} = 0,
    \end{align*}
    so that $u_A^{(X_\perp -A_V)f}|_{\partial_+ (SM)} = I_{A,\perp} f$. Note that since $I_{A,\perp}f \in \V_{A,-}$, $Q_{A,-} I_{A,\perp}f$ is even in $v$ on $SM$, and in particular we have
    \begin{align*}
	u_{A,+}^{(X_\perp - A_V)f}|_{\partial (SM)} = \frac{1}{2} Q_{A,-} I_{A,\perp}f.
    \end{align*}
    Applying the Hilbert transform to the transport equation above, using the commutators and projecting onto odd harmonics, we obtain
    \begin{align}
	(X+A) (Hu_{A,+}^{(X_\perp -A_V)f} - f) = - (X_\perp - A_V) W_{A,\perp} f, \qquad W_{A,\perp} f := \pi_0\ u_A^{(X_\perp-A_V)f}.
	\label{eq:transWAperp}
    \end{align}
    From this equation, we get that the function $(Hu_{A,+}^{(X_\perp -A_V)f} - f)$ is nothing but 
    \begin{align}
	Hu_{A,+}^{(X_\perp -A_V)f} - f = u_{A}^{(X_\perp - A_V) W_{A,\perp} f} + w_{\psi, A},
	\label{eq:HuAplus}
    \end{align}
    where 
    \begin{align*}
	w = (C_{A}^{-1} (Hu_{A,+}^{(X_\perp -A_V)f})|_{\partial_- SM})\circ \alpha = \frac{1}{4} (B_{A,+} - B_{A,-}) H Q_{A,-} I_{A,\perp} f.
    \end{align*}
    Upon applying $\pi_0$ (fiber average) to \eqref{eq:HuAplus}, we obtain
    \begin{align*}
	f + W_{A,\perp}^2 f = - \pi_0 w_{\psi, A} = -\frac{1}{2\pi} I_{\Aadj,0}^* w.
    \end{align*}
    As in the inversion of $I_{A,0}$, we notice that $B_{A,-} H Q_{A,-} I_{A,\perp} f\in \V_{A,-}$ and as such is annihilated by $I_{\Aadj,0}^*$. As a conclusion, the reconstruction formula, in its final form, looks like \eqref{eq:FredWAstar}. 
\end{proof}

\subsection{Properties of the error operators} 

Unlike the geodesic case studied in \cite{Pestov2004}, $W_A$ and $W_{A,\perp}$ are not always $L^2(M)$-adjoints. Consider the equations \eqref{eq:FredWA} and \eqref{eq:FredWAstar} corresponding to the connection $\Aadj$. They give,
\begin{align}
  f + W_{\Aadj}^2 f &= \frac{1}{8\pi} I_{A,\perp}^* B_{\Aadj,+} H Q_{\Aadj,-} I_{\Aadj,0}f, \qquad f\in C^\infty(M,\Cm^n). \label{eq:FredA2}\\
  f + W_{\Aadj,\perp}^2 f &= - \frac{1}{8\pi} I_{A, 0}^* B_{\Aadj,+} H Q_{\Aadj,-} I_{\Aadj,\perp}f, \qquad f\in C_0^\infty(M,\Cm^n).  \label{eq:FredAperp2}
\end{align}
Inspecting the right hand sides suggests that, for instance, taking the adjoint equation to \eqref{eq:FredWA} would yield \eqref{eq:FredAperp2}. A partial answer to this heuristic guess is to establish:
\begin{lemma} \label{lem:Wops}
    The operators $W_A$ and $W_{\Aadj,\perp}$ are $L^2(M,\Cm^n)\to L^2(M,\Cm^n)$ adjoints. As a consequence, so are $W_{\Aadj}$ and $W_{A,\perp}$. In particular, if $A = \Aadj$, then $W_A$ and $W_{A,\perp}$ are adjoints. 
\end{lemma}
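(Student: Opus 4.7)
The plan is to establish $\langle W_A f, g\rangle_{L^2(M)} = \langle f, W_{\Aadj, \perp} g\rangle_{L^2(M)}$ for $f\in C^\infty(M,\Cm^n)$ and $g\in C_0^\infty(M,\Cm^n)$, a dense subspace of $L^2(M,\Cm^n)$. First lift both inner products to pairings on $L^2(SM)$ via $\langle\pi_0 u, h\rangle_M = \frac{1}{2\pi}\langle u, h\circ\pi\rangle_{SM}$. On the left, an integration by parts in $X_\perp$ (no boundary contribution because $g\circ\pi$ vanishes on $\partial(SM)$) together with $V^* = -V$ and $(A_V)^* = \Aadj_V$ gives
\[
\langle W_A f, g\rangle_{L^2(M)} = -\tfrac{1}{2\pi}\langle u_A^f,\, (X_\perp + \Aadj_V)(g\circ\pi)\rangle_{L^2(SM)}.
\]

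For the right-hand side I use the integral representation $u_{\Aadj}^\phi(\x,v) = \int_0^{\tau(\x,v)} E_{\Aadj}^{-1}(\x,v,t)\phi(\varphi_t(\x,v))\,dt$, the identity $U_A^* = U_{\Aadj}^{-1}$ from \eqref{eq:UUstar} (which yields $(E_{\Aadj}^{-1})^* = E_A$), and Santal\'o's formula together with a change of variables swapping the parameter $t$ for the reverse-flow parameter. This rewrites
\[
\langle f\circ\pi, u_{\Aadj}^{(X_\perp - \Aadj_V)(g\circ\pi)}\rangle_{SM} = \langle \tilde u_A^{f\circ\pi},\, (X_\perp - \Aadj_V)(g\circ\pi)\rangle_{SM},
\]
where the ``backward-integrated'' analogue of $u_A^F$ is
\[
\tilde u_A^F(\x,v) := U_A(\x,v)\int_{-\tau(\x,-v)}^0 U_A^{-1}(\varphi_s(\x,v))F(\varphi_s(\x,v))\,ds.
\]

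The bridging step is the identity $\tilde u_A^F = u_A^F\circ\rho_\pi$ (with $\rho_\pi(\x,v) = (\x,-v)$), which follows from $U_A(\x,-v) = U_A(\x,v)\,C_A(\varphi_+(\x,-v))$ (the relation derived just before \eqref{eq:CAid}) combined with a substitution $t\to -t$ along the geodesic. Consequently $u_A^F + \tilde u_A^F = 2u_{A,+}^F$ is even in $v$ and equals $(I_{A,0}f)_{\psi,A}$ (a first integral of $X+A$, via Lemma \ref{lem:sym2}), while $u_A^F - \tilde u_A^F = 2u_{A,-}^F$ is odd in $v$. Splitting $\phi_\pm := X_\perp G \pm \Aadj_V G$ with $G := g\circ\pi$ and recombining, the desired equality reduces to
\[
\langle u_A^F + \tilde u_A^F,\, X_\perp G\rangle_{SM} + \langle u_A^F - \tilde u_A^F,\, \Aadj_V G\rangle_{SM} = 0.
\]
The first summand vanishes by $v$-parity (even integrand paired with the odd $X_\perp G$).

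\textbf{Main obstacle.} The second summand pairs two odd-in-$v$ factors, so the parity argument alone does not close the proof. To finish I would exploit the first-integral property $(X+A)(u_A^F + \tilde u_A^F) = 0$ in tandem with the commutator identity \eqref{eq:commutatorA} applied to $\Aadj_V G$, expressing $\Aadj_V G$ via $[H, X+\Aadj]$ acting on $G$, and performing a further Santal\'o-type integration by parts that recasts $\langle u_{A,-}^F, \Aadj_V G\rangle$ as a boundary integral that annihilates because $g|_{\partial M}=0$. This compensation between the fiber-averaged action of $\Aadj_V$ on the odd part of $u_A^F$ and the geodesic derivative of $G$ is the technical crux; its verification is the substantive content beyond the otherwise clean algebraic manipulations in the preceding steps.
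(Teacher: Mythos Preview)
Your approach is sound but contains a sign error that manufactures the ``main obstacle'' you struggle with. You claim $(A_V)^* = \Aadj_V$, but since $\Aadj = -A^*$ and $V$ commutes with conjugate-transpose, one has
\[
(A_V)^* = (V A)^* = V(A^*) = -V(\Aadj) = -\Aadj_V.
\]
With the correct sign, the integration by parts on the left gives
\[
\langle W_A f, g\rangle_{L^2(M)} = -\tfrac{1}{2\pi}\langle u_A^F,\, (X_\perp - \Aadj_V)G\rangle_{L^2(SM)},
\]
which now matches the right-hand side exactly: both pairings involve the \emph{same} integrand $(X_\perp - \Aadj_V)G$. The desired identity therefore reduces to
\[
\langle u_A^F + \tilde u_A^F,\, (X_\perp - \Aadj_V)G\rangle_{L^2(SM)} = 0,
\]
and this vanishes immediately by parity: $u_A^F + \tilde u_A^F = 2u_{A,+}^F$ is even in $v$, while $(X_\perp - \Aadj_V)G$ is odd in $v$ (both $X_\perp G$ and $\Aadj_V G$ are odd since $G$ is even and $X_\perp$, $\Aadj_V$ flip parity). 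Your second summand $\langle u_A^F - \tilde u_A^F, \Aadj_V G\rangle$ simply does not appear once the sign is right, so the elaborate salvage you sketch is unnecessary.

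For comparison, the paper's argument is more direct: after the same $X_\perp$ integration by parts it substitutes $-(X_\perp + A^*_V)g = (X - A^*)u_{\Aadj}^{(X_\perp + A^*_V)g}$ from the transport equation, integrates by parts in $X$, and the resulting boundary term $(I_{A,0}f, I_{\Aadj,\perp}g)_{L^2_\mu}$ is killed by the orthogonality of $\V_{A,+}$ and $\V_{\Aadj,-}$ (Lemma~\ref{lem:orthogonal}). Your route, once corrected, trades that orthogonality lemma for the antipodal identity $\tilde u_A^F = u_A^F\circ\rho_\pi$ and a direct parity cancellation on $SM$; the two arguments are essentially equivalent repackagings of the same Santal\'o-plus-parity mechanism.
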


\begin{proof} It is enough to check it for $f,g\in C^\infty_0(M)$. We compute
  \begin{align*}
    2\pi (W_A f, g)_{L^2(M)} &= 2\pi (\pi_0 (X_\perp -A_V) u_A^f, g)_{L^2(M)} \\
    &= ( (X_\perp -A_V) u_A^f, g)_{L^2(SM)} \\
    &= (u_A^f, -(X_\perp + A^*_V)g)_{L^2(SM)} \\
    &= (u_A^f, (X\Aadj)u_{\Aadj}^{(X_\perp + A^*_V)g})_{L^2(SM)} \\ 
    &= (-(X+A)u_A^f, u_{\Aadj}^{(X_\perp + A^*_V)g})_{L^2(SM)} - \cancel{(I_{A,0}f, I_{\Aadj,\perp}g)_{L^2_\mu (\partial_+ (SM))}} \\ 
    &= (f, u_{\Aadj}^{(X_\perp + A^*_V)g})_{L^2(SM)} \\
    &= 2\pi (f, \pi_0 u_{\Aadj}^{(X_\perp + A^*_V)g})_{L^2(M)} \\
    &= 2\pi (f, W_{\Aadj, \perp}g)_{L^2(SM)}.
  \end{align*}
  The crossed term is zero because $I_{A,0}f\in \V_{A,+}$ while $I_{\Aadj,\perp}g \in \V_{\Aadj, -}$ and both spaces are orthogonal by virtue of Lemma \ref{lem:orthogonal}. The lemma is proved. 
\end{proof}

The next result establishes that in the case where the metric is simple, the reconstruction formulas \eqref{eq:FredWA}, \eqref{eq:FredWAstar}, \eqref{eq:FredA2} and \eqref{eq:FredAperp2} are in fact Fredholm equations, as the operators $W_{A,0}$ and $W_{A,\perp}$ are compact. In order to prove this, we need to make explicit their Schwartz kernels, which in turns requires some recalls about Jacobi fields.

\paragraph{Jacobi fields and simplicity.} Variations of the exponential map are computed following \cite{Merry2011}. For $\xi\in T_{(\x,v)} SM$ uniquely written as $\xi = a(0) X_{(\x,v)} + b(0) X_{\perp(\x,v)} + c(0) V_{(\x,v)}$, there exist scalar functions $a(\x,v,t),b(\x,v,t),c(\x,v,t)$ such that (keeping $(\x,v)$ implicit)
\begin{align*}
  d\varphi_t(\xi) = a(t) X(t) + b(t) X_\perp (t) + c(t) V(t), 
\end{align*}
where $Y(t)$ denotes $Y(\varphi_t(\x,v))$ for $Y \in \{X,X_\perp,V\}$. Due to the structure equations, we deduce that $a,b,c$, defined on $\D$, solve the system
\begin{align*}
  \dot a = 0, \qquad \dot b + c = 0, \qquad \dot c - \kappa(\gamma(t)) b = 0.
\end{align*}
Particular Jacobi fields of interest are $d\varphi_t (X_\perp)$ described by $(a_1 \equiv 0, b_1, c_1)$ with initial condition $(a_1,b_1,c_1)(0) = (0,1,0)$, and $d\varphi_t (V)$ described by $(a_2\equiv 0, b_2, c_2)$ with initial condition $(a_2,b_2,c_2)(0) = (0,0,1)$. In particular, as stated in Section \ref{sec:main}, the absence of conjugate points on a surface $(M,g)$ is equivalent to the non-vanishing of the function $b_2$ outside $\{t=0\}$.

\paragraph{Kernels of $W_A$ and $W_{A,\perp}$.} We now make explicit the kernels of the operators $W_A$ and $W_{A,\perp}$ defined in \eqref{eq:W}, by showing the following
\begin{lemma} \label{lem:kernels} The operators $W_A, W_{A,\perp}$ take the form
    \begin{align*}
	W_A f(\x) &= \frac{1}{2\pi} \int_{S_\x} \int_0^{\tau(\x,v)} w_A(\x,v,t) f(\varphi_t(\x,v))\ dt\ dS(v), \qquad f\in C^\infty(M), \\
	W_{A,\perp} h(\x) &= \frac{1}{2\pi} \int_{S_\x} \int_0^{\tau(\x,v)} w_{A,\perp}(\x,v,t) h(\varphi_t(\x,v))\ dt\ dS(v), \qquad h\in C_0^\infty(M),
    \end{align*}
    with respective kernels, in exponential coordinates, given by 
    \begin{align}
	w_A(\x,v,t) &= \left(X_\perp - A_V - \frac{b_1}{b_2} V\right) E_A^{-1}(\x,v,t) - V\left( \frac{b_1}{b_2} \right) E_A^{-1}(\x,v,t),  \label{eq:wA} \\
	w_{A,\perp} (\x,v,t) &= E_A^{-1}(\x,v,t) \left( \frac{Vb_2}{b_2^2}(t) - A_V(\varphi_t(\x,v)) \right) - \frac{1}{b_2(t)} V(E_A^{-1}(\x,v,t)).         \label{eq:wAperp} 
    \end{align}    
\end{lemma}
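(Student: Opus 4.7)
The plan is to differentiate $u_A^f(\x,v) = \int_0^{\tau(\x,v)} E_A^{-1}(\x,v,t)\, F(\x,v,t)\,dt$ (where $F(\x,v,t):=f(\pi\varphi_t(\x,v))$) under the integral sign, to trade the resulting $X_\perp$-action on $F$ for a $V$-action via a Jacobi-field substitution, and to handle the boundary terms produced by $\tau$-variations. The key identities stem from the fact that $d\pi\cdot d\varphi_t(X_\perp)$ and $d\pi\cdot d\varphi_t(V)$ are $b_1(t)$ and $b_2(t)$ times the same perpendicular direction to $\dot\gamma(t)$ (using $a_1\equiv a_2\equiv 0$ in the Jacobi ODE system), which give the equivalent pointwise relations
\[
X_\perp F = \tfrac{b_1}{b_2}\,V F, \qquad V F = b_2\,(X_\perp (f\circ\pi))\circ\varphi_t.
\]
I will also need the boundary identity $X_\perp\tau = (b_1/b_2)|_{t=\tau}\,V\tau$, obtained by implicit-function differentiation of $\rho(\pi\varphi_\tau)=0$ for any boundary defining function $\rho$ (both $X_\perp\tau$ and $V\tau$ inherit the same $d\rho(\cdot)$ factor from the normal component of the Jacobi field at the boundary).

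For $W_A$, differentiating under the integral produces a boundary contribution $(X_\perp\tau)(E_A^{-1}F)|_{t=\tau}$ together with $\int_0^\tau[(X_\perp-A_V)E_A^{-1}]F\,dt + \int_0^\tau E_A^{-1}(b_1/b_2)\,V F\,dt$ after substituting $X_\perp F=(b_1/b_2)VF$. Writing $E_A^{-1}(b_1/b_2)VF = V[E_A^{-1}(b_1/b_2)F] - V[E_A^{-1}(b_1/b_2)]F$ and invoking $\int_0^\tau V[\cdot]\,dt = V\int_0^\tau\cdot\,dt - V\tau\cdot[\cdot]|_{t=\tau}$, the total boundary contribution collapses to $[X_\perp\tau-(V\tau)(b_1/b_2)|_\tau](E_A^{-1}F)|_{t=\tau}$, which vanishes by the boundary identity. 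Since $\pi_0\circ V=0$ kills the surviving $V\int_0^\tau\cdot\,dt$, a Leibniz expansion of $V[E_A^{-1}(b_1/b_2)]$ yields exactly \eqref{eq:wA}.

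For $W_{A,\perp}$ (with $f\in C_0^\infty(M)$), the integrand of $u_A^{(X_\perp -A_V)f}$ becomes $E_A^{-1}[(1/b_2)VF - A_V(\varphi_t) F]$ after the second identity above; the same integration by parts in $V$ produces a boundary contribution proportional to $V\tau\cdot(E_A^{-1}F/b_2)|_{t=\tau}$, which now vanishes directly because $F|_{t=\tau}=f|_{\partial M}=0$. Expanding $V(E_A^{-1}/b_2)$ by Leibniz gives \eqref{eq:wAperp}. The principal obstacle is the boundary-term cancellation at $t=\tau$: for $W_A$ it demands the Jacobi-field identity relating $X_\perp\tau$ and $V\tau$, while for $W_{A,\perp}$ it forces (and is supplied by) the compact-support hypothesis on $f$. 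Once these are in place, the remainder is routine Leibniz bookkeeping followed by unpacking $\pi_0$ as the fiberwise average $\frac{1}{2\pi}\int_{S_\x}\cdot\,dS(v)$.
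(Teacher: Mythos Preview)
Your proposal is correct and follows essentially the same route as the paper: the Jacobi-field substitution $X_\perp F = (b_1/b_2)VF$ (resp.\ $VF = b_2\,(X_\perp f)\circ\varphi_t$), integration by parts in $V$ along the fiber using $\pi_0\circ V=0$, and cancellation of the $t=\tau$ boundary terms via $b_2 X_\perp\tau = b_1 V\tau$ for $W_A$ and via $f|_{\partial M}=0$ for $W_{A,\perp}$. The only minor difference is that you sketch the boundary identity $X_\perp\tau = (b_1/b_2)|_\tau\,V\tau$ by implicit differentiation of a boundary defining function, whereas the paper proves it (as a separate lemma) by showing the quantity $b_2(\tau)X_\perp\tau - b_1(\tau)V\tau$ is a first integral of $X$ vanishing on $\partial_-(SM)$.
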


The proof of Lemma \ref{lem:kernels} makes use of the following property, whose proof we relegate to the appendix:
\begin{lemma}\label{lem:tau} For every $(\x,v)\in SM$, 
    \[ b_2(\x,v,\tau(\x,v)) X_\perp \tau(\x,v) = b_1(\x,v,\tau(\x,v)) V\tau(\x,v). \]        
\end{lemma}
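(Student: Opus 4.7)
The identity should follow from differentiating the exit-point map $F:SM\to\partial M$ defined by $F(\x,v):=\gamma_{(\x,v)}(\tau(\x,v))$ in the two horizontal/vertical directions $X_\perp$ and $V$, and imposing that the differentials land in $T\partial M$. The key inputs are (a) the Jacobi-field descriptions $d\varphi_t(X_\perp)=b_1\,X_\perp+c_1\,V$ and $d\varphi_t(V)=b_2\,X_\perp+c_2\,V$ (both with vanishing $X$-component), (b) the identities $d\pi(X)=v$, $d\pi(V)=0$, $d\pi(X_\perp)=v^\perp$, and (c) strict convexity of $\partial M$, which guarantees that the exit-velocity is transverse to $\partial M$ except at glancing directions.

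\textbf{Execution.} First I will assume $(\x,v)$ is a point where $\tau$ is smooth (this is automatic for interior points and for all non-glancing boundary points on a strictly convex non-trapping surface). Let $\nu$ denote the outer unit normal to $\partial M$, extended locally near $\gamma(\tau(\x,v))$, and set $v_\tau:=\dot\gamma(\tau(\x,v))$. By the chain rule, for any tangent vector $\xi\in T_{(\x,v)}SM$,
\begin{align*}
    dF(\xi) = v_\tau\,d\tau(\xi) + d\pi\bigl(d\varphi_{\tau(\x,v)}(\xi)\bigr).
\end{align*}
Applying this with $\xi=X_\perp$ and $\xi=V$, using the Jacobi-field formulas recalled above together with $d\pi(X_\perp)=v^\perp$ and $d\pi(V)=0$ evaluated at $\varphi_\tau(\x,v)$, gives
\begin{align*}
    dF(X_\perp) &= v_\tau\,X_\perp\tau + b_1(\tau)\,v_\tau^\perp, \\
    dF(V) &= v_\tau\,V\tau + b_2(\tau)\,v_\tau^\perp.
\end{align*}

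Next, since $F$ takes values in $\partial M$, both $dF(X_\perp)$ and $dF(V)$ are tangent to $\partial M$ at $\gamma(\tau)$, hence pair to zero with $\nu$. Pairing each of the two displays above against $\nu$ produces the two scalar relations
\begin{align*}
    \langle v_\tau,\nu\rangle\,X_\perp\tau + b_1(\tau)\,\langle v_\tau^\perp,\nu\rangle &= 0, \\
    \langle v_\tau,\nu\rangle\,V\tau + b_2(\tau)\,\langle v_\tau^\perp,\nu\rangle &= 0.
\end{align*}
Multiplying the first by $b_2(\tau)$, the second by $b_1(\tau)$, and subtracting eliminates $\langle v_\tau^\perp,\nu\rangle$ and yields
\begin{align*}
    \langle v_\tau,\nu\rangle\,\bigl(b_2(\tau)\,X_\perp\tau - b_1(\tau)\,V\tau\bigr) = 0.
\end{align*}
Strict convexity of $\partial M$ forces $\langle v_\tau,\nu\rangle>0$ at every non-glancing exit direction, so the desired identity follows. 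At glancing points the claim is handled by continuity (or is vacuous, since $\tau$ vanishes and both sides are $0$ on $\partial_-(SM)$), and for $(\x,v)$ interior to $M$ the exit direction is automatically non-glancing.

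\textbf{Anticipated obstacle.} The only mildly delicate point is making sure the derivative $dF$ is taken correctly: $\tau$ is itself a function of $(\x,v)$, so one must track both the $t$-dependence and the $(\x,v)$-dependence in $\gamma_{(\x,v)}(\tau(\x,v))$ when computing $dF$. Once that is written down carefully, the rest is a two-by-two linear elimination, and the transversality provided by strict convexity does the job of licensing the cancellation of $\langle v_\tau,\nu\rangle$.
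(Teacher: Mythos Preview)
Your argument is correct and takes a genuinely different route from the paper's. The paper defines $G(\x,v):=b_2(\x,v,\tau)X_\perp\tau-b_1(\x,v,\tau)V\tau$ and shows it is a first integral of $X$ vanishing on $\partial_-(SM)$: the boundary vanishing is immediate from $\tau|_{\partial_-(SM)}=0$, $b_2(\cdot,\cdot,0)=0$ and $V\tau|_{\partial_-(SM)}=0$; for $XG=0$ the paper packages the column vectors $(X_\perp\tau,V\tau)^T$ and $(b_1(\tau),b_2(\tau))^T$ into a $2\times 2$ matrix, shows it solves a linear ODE along the flow with traceless coefficient matrix, and reads off $XG=0$ as Liouville's formula (constancy of the determinant). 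By contrast, you differentiate the exit-footpoint map $F=\pi\circ\varphi_\tau$ and use that $dF$ lands in the one-dimensional space $T_{F(\x,v)}\partial M$: pairing $dF(X_\perp)$ and $dF(V)$ against the outward normal yields two scalar relations whose $2\times 2$ elimination is exactly the claimed identity, with transversality of the exit direction licensing the final cancellation. Your approach is more direct and makes the geometric origin of the identity transparent (it is the linear dependence forced by $\dim T\partial M=1$); the paper's approach stays entirely within the transport/ODE formalism used elsewhere in the article and avoids invoking the footpoint projection. Both proofs implicitly work on the locus where $\tau$ is smooth (the complement of the glancing set $\partial_0(SM)$), which is all that is needed where the lemma is actually used.
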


\begin{proof}[Proof of Lemma \ref{lem:kernels}] {\bf Proof of \eqref{eq:wA}.} Using the definition \eqref{eq:W},
    \begin{align}
	\begin{split}
	    W_A f(\x) &= \frac{1}{2\pi} \int_{S_\x} \int_0^{\tau(\x,v)} (X_\perp -A_V) E_A^{-1}(\x,v,t) f(\varphi_t(\x,v))\ dt\ dS(v) \\ 
	    &\qquad + \frac{1}{2\pi} \int_{S_\x} (X_\perp \tau) E_A^{-1}(\x,v,\tau) f(\varphi_\tau(\x,v))\ dS(v),    
	\end{split}
	\label{eq:tmpw}	
    \end{align}
    with $E_A$ defined in \eqref{eq:EA}. In this expression, the only term which differentiates $f$ is given by $X_\perp (f(\varphi_t(\x,v)))$, which we rewrite as 
    \begin{align*}
	X_\perp (f(\varphi_t(\x,v))) = b_1(\x,v,t) X_\perp f(\varphi_t(\x,v)) + c_1(\x,v,t) \cancel{ V f(\varphi_t(\x,v)) } = \frac{b_1(\x,v,t) }{ b_2(\x,v,t)} V (f(\varphi_t(\x,v))).
    \end{align*}
    The corresponding term can then be rewritten as
    \begin{align*}
	\int_{S_\x} \int_0^{\tau(\x,v)} &E_A^{-1}(\x,v,t) X_\perp (f(\varphi_t(\x,v)))\ dt\ dS(v) \\
	&= \int_{S_\x} \int_0^{\tau(\x,v)} E_A^{-1}(\x,v,t) \frac{b_1(\x,v,t) }{ b_2(\x,v,t)} V (f(\varphi_t(\x,v)))\ dt\ dS(v) \\
	&= \int_{S_\x} V\left( \int_0^{\tau(\x,v)} E_A^{-1}(\x,v,t) \frac{b_1(\x,v,t) }{ b_2(\x,v,t)} f(\varphi_t(\x,v))\ dt \right)\ dS(v) \\
	&\qquad - \int_{S_\x} (V\tau) E_A^{-1}(\x,v,\tau) \frac{b_1(\x,v,\tau) }{ b_2(\x,v,\tau)} f(\varphi_\tau)\ dS(v) \\
	&\qquad - \int_{S_\x} \int_0^{\tau(\x,v)} V \left( E_A^{-1}(\x,v,t) \frac{b_1(\x,v,t) }{ b_2(\x,v,t)} \right) f(\varphi_t(\x,v)) \ dt\ dS(v).
    \end{align*}
    In the last right-hand-side, the first term vanishes identically and the second cancels out the boundary term in \eqref{eq:tmpw} thanks to Lemma \ref{lem:tau}. We then arrive at an expression for $w_A$ as 
    \begin{align*}
	w_A(\x,v,t) = (X_\perp - A_V) E_A^{-1}(\x,v,t) - V\left( \frac{b_1(\x,v,t)}{b_2(\x,v,t)} E_A^{-1} (\x,v,t)\right),
    \end{align*}
    which yields \eqref{eq:wA} after applying a product rule. 

    {\bf Proof of \eqref{eq:wAperp}.} On to $W_{A,\perp}$, we write
    \begin{align*}
	W_{A,\perp} h (\x) = \frac{1}{2\pi} \int_{S_\x} \int_0^{\tau(\x,v)} E_A^{-1}(\x,v,t)(X_\perp h (\varphi_t(\x,v)) - A_V (\varphi_t(\x,v)) h(\varphi_t(\x,v))) \ dt\ dS(v).
    \end{align*}
    We rewrite the only term which differentiates $f$ as
    \begin{align*}
	X_\perp h(\varphi_t(\x,v)) = \frac{1}{b_2(t)} (b_2(t) X_\perp h (\varphi_t(\x,v)) + c_2(t) \cancel{Vh(\varphi_t(\x,v)}) = \frac{1}{b_2(t)} V (h\circ \varphi_t(\x,v)).  
    \end{align*}
    Integrating this term by parts on $S_\x$ in the expression of $W_{A,\perp} h(\x)$, this creates a boundary term of the form
    \begin{align*}
	\frac{-1}{2\pi} \int_{S_\x} \frac{E_A^{-1}(\x,v,\tau(\x,v))}{b_2(\x,v,\tau(\x,v))} (V\tau) h(\varphi_\tau(\x,v))\ dS(v),
    \end{align*}
    which vanishes since by assumption $h\in C_0^\infty(M)$. For the remaining term, we obtain the expression for $w_{A,\perp}$ as
    \begin{align*}
	w_{A,\perp}(\x,v,t) &= -V\left( \frac{E_A^{-1}(\x,v,t)}{b_2(t)} \right) - E_A^{-1}(\x,v,t) A_V(\varphi_t(\x,v)),
    \end{align*}    
    hence \eqref{eq:wAperp}. 
\end{proof}

\paragraph{The operators $W_A$ and $W_{A,\perp}$ are compact.}



In what follows, for a $\Cm^{n\times n}$ matrix $B$, we denote $\|B\| = (\tr (B^* B))^{\frac{1}{2}}$ its Frobenius norm. For an operator of the form
\begin{align}
    Wf(\x) = \frac{1}{2\pi} \int_{S_\x} \int_0^{\tau(\x,v)} \frac{w(\x,v,t)}{b_2(\x,v,t)} f(\varphi_t(\x,v))\ b_2(\x,v,t)\ dt\ dS(v) = \int_M {\cal W}(\x,y) f(\y)\ dM_\y,  
    \label{eq:op}
\end{align}
where we have defined ${\cal W} (\x,\y) := \frac{1}{2\pi} \frac{w(\x,Exp_\x^{-1} (\y))}{b_2(\x, Exp_\x^{-1} (\y))}$, we may obtain an estimate on the $L^2(M,\Cm^n)\to L^2(M,\Cm^n)$ norm of $W$ by computing 
\begin{align*}
    \|W\|_{L^2\to L^2}^2 &= \int_M \int_M {\|{\cal W}\|_\rho^2}(\x,\y)\ dM_\x\ dM_\y, 
\end{align*}
whenever the right hand side is finite, and where $\|\cdot\|_\rho$ denotes the spectral norm on $\Cm^{n\times n}$. Using that $\|\cdot\|_\rho\le \|\cdot\|$ and changing variable $\y = Exp_\x(v,t)$, we arrive at the following estimate, to be used below
\begin{align}
    \|W\|_{L^2\to L^2}^2\le \frac{1}{4\pi^2} \int_M \int_{S_\x} \int_0^{\tau(\x,v)} \frac{\|w\|^2(\x,v,t)}{b_2(\x,v,t)}\ dt\ dS(v)\ dM_\x,
    \label{eq:basicest}
\end{align}
which implies both continuity and compactness of $W$ whenever the right hand side is finite.  

\begin{lemma} \label{lem:compact} The operators $W_A$ and $W_{A,\perp}$ (and by duality via Lemma \ref{lem:Wops}, $W_{\Aadj}$ and $W_{\Aadj, \perp}$) are $L^2(M,\Cm^n)\to L^2(M,\Cm^n)$ compact. 
\end{lemma}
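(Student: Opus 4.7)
The plan is to bound the Hilbert-Schmidt norms of $W_A$ and $W_{A,\perp}$ via \eqref{eq:basicest} applied to the explicit kernels of Lemma \ref{lem:kernels}, and to deduce $L^2$-compactness from their finiteness; the analogous conclusions for $W_{\Aadj}$ and $W_{\Aadj,\perp}$ then follow by duality through Lemma \ref{lem:Wops}. The simplicity hypothesis $C_1 t \le |b_2(\x,v,t)| \le C_2 t$, together with $\tau\le\tau_\infty$ and $\Vol M<\infty$, reduces matters to controlling $\|w\|^2/|b_2|$ near $\{t=0\}$, where $1/|b_2|\sim 1/t$ is singular. My main claim is that $w_A$ and $w_{A,\perp}$ in fact vanish (and are $O(t)$) at $t=0$ uniformly on $SM$, so that $\|w\|^2/|b_2|=O(t)$ becomes integrable.

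I would first assemble the uniform Taylor expansions at $t=0$. From the Jacobi ODEs one has $b_1=1+O(t^2)$ and $b_2=-t+\frac{\kappa(\x)}{6}t^3+O(t^4)$; since the coefficients of $t^k$ for $k\le 3$ depend only on $\x$, one deduces $Vb_1=O(t^3)$ and $Vb_2=O(t^4)$, whence $\frac{b_1}{b_2}=-\frac{1}{t}+O(t)$, $V\bigl(\frac{b_1}{b_2}\bigr)=O(t^2)$, and $\frac{Vb_2}{b_2^2}=O(t^2)$. From $\frac{d}{dt}E_A^{-1}=E_A^{-1}A(\varphi_t)$ with $E_A^{-1}(0)=\Imm_n$ one gets $E_A^{-1}(t)=\Imm_n+tA(\x,v)+O(t^2)$, and differentiating the ODE in $V$ (using $V(A\circ\varphi_t)|_{t=0}=A_V(\x,v)$) yields $VE_A^{-1}(t)=tA_V(\x,v)+O(t^2)$.

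The crux is to verify the cancellations of the $O(1)$-residues at $t=0$. In \eqref{eq:wA}, the nonzero value $(X_\perp-A_V)E_A^{-1}|_{t=0}=-A_V(\x,v)$ is offset by $-\frac{b_1}{b_2}VE_A^{-1}=-\bigl(-\tfrac{1}{t}+O(t)\bigr)\bigl(tA_V+O(t^2)\bigr)\to A_V(\x,v)$, while $-V\bigl(\tfrac{b_1}{b_2}\bigr)E_A^{-1}\to 0$. Symmetrically in \eqref{eq:wAperp}, the value $-E_A^{-1}A_V(\varphi_t)|_{t=0}=-A_V(\x,v)$ is offset by $-\frac{1}{b_2}VE_A^{-1}\to A_V(\x,v)$, with $E_A^{-1}\frac{Vb_2}{b_2^2}\to 0$. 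Thus both kernels extend continuously to $\D$ with value $0$ at $t=0$ and satisfy $\|w\|^2=O(t^2)$ uniformly; inserting into \eqref{eq:basicest} gives $\|w\|^2/|b_2|=O(t)$ near $t=0$ and bounded elsewhere, so the right-hand side is finite. Hence $W_A$ and $W_{A,\perp}$ are Hilbert-Schmidt, and in particular compact. The main obstacle is precisely this cancellation step, which is invisible in the raw formulas \eqref{eq:wA}--\eqref{eq:wAperp} but arises from the exact matching of the Taylor coefficient $VE_A^{-1}(t)/t|_{t=0}=A_V(\x,v)$ against the residue of $1/b_2$ at $t=0$.
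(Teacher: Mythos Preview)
Your proposal is correct and follows essentially the same approach as the paper: both analyze the Taylor expansions of $w_A$ and $w_{A,\perp}$ near $t=0$, identify the cancellation $VE_A^{-1}/b_2 \to -A_V$ against the residual $-A_V$ term, conclude that the kernels vanish at $t=0$ so that $w/b_2$ is bounded on $\D$, and then invoke the Hilbert--Schmidt criterion \eqref{eq:basicest}. Your write-up is slightly more explicit about the orders (e.g.\ $Vb_2=O(t^4)$, $V(b_1/b_2)=O(t^2)$, hence $w=O(t)$) than the paper, which cites \cite{Monard2013a,Pestov2004} for the vanishing of $Vb_2/b_2^2$ and $V(b_1/b_2)$ and then simply asserts boundedness of $w/b_2$; but the substance is the same.
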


\begin{proof} The proof mainly consists in looking at the behavior of $w_A$ and $w_{A,\perp}$ defined in \eqref{eq:wA} and \eqref{eq:wAperp} near $t =0$. Near $t=0$, the following expansions hold: 
    \begin{align*}
	b_2(t) = -t + \O(t^3), \qquad E_A^{-1}(\x,v,t) = \Imm_n + t A(\x,v) + \O(t^2), 	
    \end{align*}
    where we used that 
    \begin{align*}
	\left. \frac{d}{dt}\right|_{t=0} \!\!\! E_A^{-1} (\x,v,t) = E_A^{-1} (\x,v,0) A(\varphi_0(\x,v)) = A(\x,v). 
    \end{align*}
    In particular, $V(E_A^{-1}(\x,v,t))/b_2(\x,v,t) = -A_V(\x,v) + \O (t)$. Together with the fact that the functions $\frac{Vb_2}{b_2^2}$ and $V\left( \frac{b_1}{b_2} \right)$ vanish as $t\to 0$ (see for instance \cite{Monard2013a,Pestov2004}), this allows to deduce that 
    \begin{align*}
	\lim_{t\to 0} w_{A}(\x,v,t) = \lim_{t\to 0} w_{A,\perp}(\x,v,t) = 0. 
    \end{align*}
    This means in particular that the function $\frac{k(\x,v,t)}{b_2(\x,v,t)}$ where $k \in \{w_A, w_{A,\perp}\}$ is bounded near $t=0$. Since it inherits the regularity of $b_1, b_2, A$ outside $t=0$ and $b_2$ does not vanish outside $\{t=0\}$ because $(M,g)$ is simple, the only problem was at $t=0$. For each operator, changing variables $\y(v,t) = \gamma_{\x,v}(t)$ with change of volume $dM_\y = |b_2(\x,v,t)|\ dt\ dS(v)$, the Schwarz kernels of $W_A$ and $W_{A,\perp}$ are of the form $\frac{k(\x,v(\y),d(\x,\y))}{b_2(\x,v(\y),d(\x,\y))}$ with $k\in \{w_A,w_{A,\perp}\}$, and they are bounded near the diagonal and away from the diagonal. Since $M\times M$ has finite volume, these kernels belong to $L^2(M\times M)$, and thus the operators $W_A, W_{A,\perp}: L^2(M,\Cm^n)\to L^2(M,\Cm^n)$ are compact. 
\end{proof}

\subsection{Analytic Fredholm approach - Proof of Theorem \ref{thm:analytic}}

We start with some preliminary estimates for ordinary differential equations. Let us define, for $A\in C^1(M,(\Lambda^1)^{n\times n})$,
\begin{align*}
    \alpha_A := \sup_{(\x,v)\in SM} \|(A+A^*)/2\|(\x,v).
\end{align*}
In addition, for a function $B:\D\to \Cm^{n\times n}$, we define $\|B\|_{F,\infty} := \sup_{(\x,v,t)\in \D} \|B(\x,v,t)\|$, making $(C^0(\D,\Cm^{n\times n}),\|\cdot\|_{F,\infty})$ into a Banach space. Moreover, since the Frobenius norm is submultiplicative, so is $\|\cdot\|_{F,\infty}$. We also consider the Banach space $(C^1(\D,\Cm^{n\times n}), \|\cdot\|_{F,\infty,1})$ with the norm
\begin{align*}
    \|B\|_{F,\infty,1} := \|B\|_{F,\infty} + \|X_\perp B\|_{F,\infty} + \|VB\|_{F,\infty} + \|XB\|_{F,\infty} + \|dB/dt\|_{F,\infty},     
\end{align*}
also submultiplicative. Now for a problem of the form 
\begin{align*}
    \frac{d}{dt} U + A(\varphi_t) U &= F \quad (\D), \qquad U|_{t=0} = 0,
\end{align*}
{\it a priori} estimates yield an estimate of the form 
\begin{align}
    \|U\|_{F,\infty} \le C(\alpha_A, \tau_\infty) \|F\|_{F,\infty}, \qquad C(\alpha_A,\tau_\infty) = \left\{
    \begin{array}{cc}
	\frac{\exp(\alpha_A \tau_\infty)-1}{\alpha_A} & \text{if } \alpha_A >0, \\
	\tau_\infty & \text{if } \alpha_A = 0.
    \end{array}
    \right.
    \label{eq:est1}
\end{align}
Moreover, for $Y \in \{X,X_\perp,V\}$, one may derive ODE's for $YU$ of the form 
\begin{align*}
    \frac{d}{dt} (Y U) + A(\varphi_t) (Y U) &= Y F - Y( A(\varphi_t)) U, \qquad YU|_{t=0} = 0, 
\end{align*}
which upon using \eqref{eq:est1} implies an estimate of the form 
\begin{align}
    \|U\|_{F,\infty,1} \le C(\alpha_A,\tau_\infty) (1 + C' \|A\|_{C^1(M,(\Lambda^1)^{n\times n})}) \|F\|_{F,\infty,1},
    \label{eq:est2}
\end{align}
where $C'$ is independent of $U$, $A$ or $F$. Similarly, a problem of the form 
\begin{align*}
    \frac{d}{dt} U + A(\varphi_t) U &= 0 \quad (\D), \qquad U|_{t=0} = \Imm_n,
\end{align*}
is equivalent to a problem for $W = U - \Imm_n$:
\begin{align*}
    \frac{d}{dt} W + A(\varphi_t) W &= -A (\varphi_t) \quad (\D), \qquad W|_{t=0} = 0,
\end{align*}
for which \eqref{eq:est2} applies. Combining this with the triangle inequality, and using that $\|A(\varphi_t)\|_{F, \infty, 1}\le \|A\|_{C^1(M,(\Lambda^1)^{n\times n})}$, we arrive at: 
\begin{align}
    \|U\|_{F,\infty,1} \le \sqrt{n} + C(\alpha_A,\tau_\infty) (1 + C' \|A\|_{C^1(M,(\Lambda^1)^{n\times n})}) \|A\|_{C^1(M,(\Lambda^1)^{n\times n})}.
    \label{eq:est3}
\end{align}
Finally, let us note that estimates \eqref{eq:est1}, \eqref{eq:est2} and \eqref{eq:est3} also hold if the connection is right-multiplied in the ODEs considered instead of left-multiplied. With these estimates in mind, we are ready to prove Theorem \ref{thm:analytic}. 

\begin{proof}[Proof of Theorem \ref{thm:analytic}] We prove the statement for $W_{A_\lambda}$ only, as the proof from $W_{A_\lambda,\perp}$ is similar. Recall that $w_A$, the kernel of $W_A$ up to exponential map, is given by 
    \begin{align*}
	w_A (\x,v,t) = \left(X_\perp - A_V - \frac{b_1}{b_2}V\right) E_A^{-1} - V\left( \frac{b_1}{b_2} \right) E_A^{-1},  
    \end{align*}
    with $E_A$ as defined in \eqref{eq:EA}. Estimates on $W_{A_\lambda}$ boil down to studying how $\lambda\mapsto w_{A_\lambda}$ behaves in the $C^0(\D,\Cm^{n\times n})$ topology, which in turn requires to look at how $\lambda\mapsto E_{A_\lambda}$ behaves in the $C^1(\D,\Cm^{n\times n})$ topology. Denote $E_\lambda = E_{A_\lambda}$ for short. Fix $\lambda_0\in \Cm$ and consider $\lambda$ close to $\lambda_0$, write by assumption 
    \begin{align*}
	A_\lambda = A_{\lambda_0} + (\lambda-\lambda_0) A'_{\lambda_0} + (\lambda-\lambda_0) B_\lambda, \qquad \lim_{\lambda\to \lambda_0} \|B_\lambda\|_{C^1(M, (\Lambda^1)^{n\times n})} = 0,
    \end{align*}
    and where $A'_{\lambda_0} \in C^1(M, (\Lambda^1)^{n\times n})$. Define $E'_{\lambda_0}$ the unique solution to 
    \begin{align*}
	\frac{d}{dt} E'_{\lambda_0} (t) + A_{\lambda_0} (\varphi_t) E'_{\lambda_0}(t) = - A'_{\lambda_0}(\varphi_t) E_{\lambda_0}(t) \qquad (\D), \qquad E'_{\lambda_0}|_{t=0} = 0.
    \end{align*}
    since $A'_{\lambda_0}(\varphi_t) E_{\lambda_0}(t)\in C^1(\D,\Cm^{n\times n})$, \eqref{eq:est2} gives us that $E'_{\lambda_0}\in C^1(\D,\Cm^{n\times n})$. Moreover, we have the relation $E_\lambda = E_{\lambda_0} + (\lambda-\lambda_0) E'_{\lambda_0} + (\lambda-\lambda_0) F_\lambda$, where $F_\lambda$ satisfies: 
    \begin{align*}
	\frac{d}{dt} F_\lambda + A_{\lambda_0}(\varphi_t) F_\lambda = - B_\lambda(\varphi_t) E_\lambda - A'_{\lambda_0}(\varphi_t) (E_\lambda-E_{\lambda_0}) \qquad (\D), \qquad F_\lambda|_{t=0} = 0.
    \end{align*}
    Since $\lim_{\lambda\to \lambda_0} \|B_\lambda(\varphi_t) E_\lambda + A'_{\lambda_0}(\varphi_t) (E_\lambda-E_{\lambda_0})\|_{C^1(\D,\Cm^{n\times n})} = 0$, then estimate \eqref{eq:est2} implies that $\lim_{\lambda\to \lambda_0} \|F_\lambda\|_{C^1(\D,\Cm^{n\times n})} = 0$ thus $\lambda\mapsto E_\lambda$ is an analytic $C^1(\D,\Cm^{n\times n})$ function. Similarly, we obtain
    \begin{align*}
	E_\lambda^{-1} = E^{-1}_{\lambda_0} - (\lambda-\lambda_0) E_{\lambda_0}^{-1} E'_{\lambda_0} E_{\lambda_0}^{-1} + (\lambda-\lambda_0) G_\lambda, \qquad \lim_{\lambda\to \lambda_0} \|G_\lambda\|_{F,\infty,1} = 0.
    \end{align*}
    Thus, upon defining 
    \begin{align*}
	w'_{A_{\lambda_0}} &:= \left(- X_\perp + (A_{\lambda_0})_V + V\left( \frac{b_1}{b_2} \right) + \frac{b_1}{b_2} V\right) E_{\lambda_0}^{-1} E'_{\lambda_0}E_{\lambda_0}^{-1} - (A'_{\lambda_0})_V E_{\lambda_0}, \\
	v_\lambda &:= \left(X_\perp - (A_{\lambda})_V - V\left( \frac{b_1}{b_2} \right) - \frac{b_1}{b_2} V\right) G_\lambda - (B_\lambda)_V (E_{\lambda_0}^{-1} - (\lambda-\lambda_0) E_{\lambda_0}^{-1} E'_{\lambda_0} E_{\lambda_0}^{-1}) \dots\\
	&\qquad + (A'_{\lambda_0})_V (\lambda-\lambda_0) E_{\lambda_0}^{-1} E'_{\lambda_0} E_{\lambda_0}^{-1},
    \end{align*}    
    we obtain that $w_{A_{\lambda_0}}'\in C^0(\D,\Cm^{n\times n})$ and 
    \begin{align}
	\frac{w_{A_\lambda} - w_{A_{\lambda_0}}}{\lambda-\lambda_0} &= w'_{A_{\lambda_0}} + v_\lambda, \qquad \lim_{\lambda\to \lambda_0} \|v_\lambda\|_{F,\infty} = 0,
	\label{eq:wexp}
    \end{align}
    where the estimate on $v_\lambda$ easily follows from the estimates on $G_\lambda$ and $B_\lambda$. In addition, let us analyze the behavior of $w'_{A_{\lambda_0}}$ near $t=0$ since the ratio $w'_{A_{\lambda_0}}/b_2$ will be continuous, hence bounded, elsewhere. Looking at the ODE satisfied by $E'_{\lambda_0}$, we have, near $t=0$, the expansions 
    \begin{align*}
	E_{\lambda_0} (\x,v,t) &= \Imm_n - t A_{\lambda_0}(\x,v) + \O(t^2),  & b_1(\x,v,t) = 1 + \O(t^2) \\
	E'_{\lambda_0} (\x,v,t) &= - t A'_{\lambda_0}(\x,v) + \O(t^2),  & b_2(\x,v,t) = -t + \O(t^3) \\
	V E'_{\lambda_0} (\x,v,t) &= - t (A'_{\lambda_0})_V + \O(t^2).
    \end{align*}
    With the additional vanishing of $V(b_1/b_2)$ as $t\to 0$, this is enough to establish that $\lim_{t\to 0} w'_{A_{\lambda_0}} = 0$, and from the relation \eqref{eq:wexp}, we also have $\lim_{t\to 0} v_\lambda = 0$. Thus $w'_{A_{\lambda_0}}/b_2$ and $v_\lambda/b_2$ are bounded on $\D$, and since $\D$ has finite volume, this clearly implies
    \begin{align*}
	\int_{M}\int_{S_\x} \int_0^{\tau(\x,v)} \frac{\|w'_{A_{\lambda_0}}(\x,v,t)\|^2}{b_2(\x,v,t)}\ dt\ dS(v)\ dM_\x <\infty,
    \end{align*}
    which, by virtue of estimate \eqref{eq:basicest}, implies that the operator $W'_{A_{\lambda_0}}$ defined in terms of the kernel $w'_{A_{\lambda_0}}$ as in \eqref{eq:op}, is $L^2(M,\Cm^n)\to L^2(M,\Cm^n)$ continuous. Reasoning similary on $V_\lambda$, we also obtain that 
    \begin{align*}
	\lim_{\lambda\to \lambda_0} \left\|\frac{W_{A_\lambda}-W_{A_{\lambda_0}}}{\lambda-\lambda_0} - W'_{A_{\lambda_0}}\right\|_{L^2\to L^2} = 0.	
    \end{align*}
    Theorem \ref{thm:analytic} is proved.
\end{proof}

\subsection{Further error estimates - proof of Theorem \ref{thm:estimate}} \label{sec:estimates}

We now refine the previous result by estimating the operator norms of $W_A$ and $W_{A,\perp}$ explicitly. 
In particular, we now define two functions of interest which appeared in the proof of Lemma \ref{lem:compact}: 
\begin{align}
    \begin{split}
	K_1(\x,v,t) &:= (X_{\perp (\x,v)} - A_V (\x,v)) E^{-1}(\x,v,t) + b_1(\x,v,t) E^{-1}(\x,v,t) A_V (\varphi_t(\x,v)) \\
	K_2(\x,v,t) &:= V_{(\x,v)} E^{-1}(\x,v,t) + b_2(\x,v,t) E^{-1}(\x,v,t) A_V (\varphi_t(\x,v)),	
    \end{split}
    \label{eq:K12}    
\end{align}
in terms of which the kernels $w_A$ and $w_{A,\perp}$ are written as
\begin{align}
    \begin{split}
	w_A(\x,v,t) &= K_1(\x,v,t) - \frac{b_1(\x,v,t)}{b_2(\x,v,t)} K_2(\x,v,t) - V\left( \frac{b_1}{b_2} \right) E^{-1}(\x,v,t), \\
	w_{A,\perp}(\x,v,t) &= \frac{-1}{b_2(\x,v,t)} K_2(\x,v,t) - V\left( \frac{1}{b_2} \right) E^{-1}(\x,v,t).	
    \end{split}
     \label{eq:wk}
\end{align}
Using the fact that
\[  \star F_A(\x) = X A_V + X_\perp A + [A,A_V], \]
we now establish the following
\begin{lemma} \label{lem:ODEK}
    The functions $K_\ell$ for $\ell= 1,2$ satisfy the following ODEs on $\D$: 
    \begin{align}
	\begin{split}
	    \frac{d}{dt} K_\ell(\x,v,t) - K_\ell(\x,v,t) A(\varphi_t(\x,v)) &= b_\ell(\x,v,t) E^{-1}(\x,v,t) \star F_A(\varphi_t(\x,v)), \\
	    K_\ell(\x,v,0) &= 0.
	\end{split}	
	\label{eq:ODEK}
    \end{align}        
\end{lemma}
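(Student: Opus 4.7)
The plan is to differentiate $K_\ell$ directly in $t$ and combine four ingredients: the matrix ODE $\partial_t E^{-1} = E^{-1} A(\varphi_t)$, which follows from $\partial_t E = -A(\varphi_t) E$; the commutativity of $\partial_t$ with the spatial fields $X_\perp, V$ on smooth functions of $(\x,v,t)\in \D$; the Jacobi pushforwards $d\varphi_t X_\perp = b_1 X_\perp(\varphi_t) + c_1 V(\varphi_t)$ and $d\varphi_t V = b_2 X_\perp(\varphi_t) + c_2 V(\varphi_t)$, together with $\dot b_\ell = -c_\ell$; and the curvature identity $\star F_A = X A_V + X_\perp A + [A, A_V]$ recalled just before the lemma.

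The initial conditions are immediate: since $E^{-1}(\x,v,0) = \Imm_n$ is independent of $(\x,v)$, its spatial derivatives vanish at $t = 0$, and with $b_1(0) = 1$, $b_2(0) = 0$ one finds $K_1(\x,v,0) = -A_V(\x,v) + A_V(\x,v) = 0$ and $K_2(\x,v,0) = 0$.

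For the ODE, set $Y = X_\perp$ for $\ell = 1$ and $Y = V$ for $\ell = 2$. Using commutativity of $\partial_t$ with $Y$ together with the ODE for $E^{-1}$, one computes
$$\partial_t(Y E^{-1}) = (Y E^{-1}) A(\varphi_t) + E^{-1}\bigl[b_\ell (X_\perp A)(\varphi_t) + c_\ell A_V(\varphi_t)\bigr],$$
where the bracket comes from applying the Jacobi pushforward to $Y(A\circ \varphi_t)$. Differentiating the remaining piece $b_\ell E^{-1} A_V(\varphi_t)$ of $K_\ell$ using $\dot b_\ell = -c_\ell$ and $\partial_t(A_V\circ\varphi_t) = (XA_V)(\varphi_t)$, the two $\pm c_\ell A_V(\varphi_t)$ contributions cancel. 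What survives reorganizes as
$$\partial_t K_\ell = \bigl(Y E^{-1} - A_V(\x,v) E^{-1}\bigr) A(\varphi_t) + b_\ell E^{-1}\bigl[X_\perp A + AA_V + XA_V\bigr](\varphi_t),$$
and the bracket equals $\star F_A + A_V A$ by the curvature identity. The residual $b_\ell E^{-1} A_V(\varphi_t) A(\varphi_t)$ then combines with the first group to reconstitute $K_\ell \cdot A(\varphi_t)$ on the right, giving exactly \eqref{eq:ODEK}.

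I expect the main obstacle to be bookkeeping rather than conceptual: one must consistently separate $A_V$ evaluated at the footpoint $(\x,v)$ (where it enters through the definition of $K_\ell$) from $A_V$ evaluated at $\varphi_t(\x,v)$ (where it enters through transport), and respect the non-commutative ordering of the $\Cm^{n\times n}$-valued matrix products throughout. The curvature identity is the mechanism that absorbs the ``obstruction'' combination $X_\perp A + AA_V + XA_V$ into the clean inhomogeneity $b_\ell E^{-1} \star F_A(\varphi_t)$ while simultaneously producing the factor $K_\ell$ on the right — without it the computation does not close.
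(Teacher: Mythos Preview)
Your approach is identical to the paper's: direct differentiation in $t$, the ODE $\partial_t E^{-1}=E^{-1}A(\varphi_t)$, the Jacobi pushforwards giving the $c_\ell$-cancellation, and the curvature identity to close. The paper only writes out $\ell=2$ and declares $\ell=1$ similar; you attempt both at once, but your unified displayed formula
\[
\partial_t K_\ell = \bigl(Y E^{-1} - A_V(\x,v) E^{-1}\bigr) A(\varphi_t) + b_\ell E^{-1}\bigl[X_\perp A + AA_V + XA_V\bigr](\varphi_t)
\]
is only correct for $\ell=1$: for $\ell=2$ there is no $-A_V(\x,v)E^{-1}$ term in $K_2$, so the first group should be just $(VE^{-1})A(\varphi_t)$, and then the residual $b_2E^{-1}A_V(\varphi_t)A(\varphi_t)$ correctly reconstitutes $K_2 A(\varphi_t)$. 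With that one-term fix (which you clearly anticipate in your closing remark about separating footpoint versus transported $A_V$), the argument is complete and matches the paper.
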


\begin{proof}
    We only treat $K_2$, as the case of $K_1$ is similar. We compute directly, keeping the variables $(\x,v)$ implicit and writing $\dot{E} \equiv \frac{dE}{dt}$:
    \begin{align*}
	\frac{d}{dt} K_2(\x,v,t) &= V \dot{E}^{-1}  - c_2 E^{-1} A_V(\varphi_t) + b_2 \dot{E}^{-1} A_V (\varphi_t) + b_2 E^{-1} X A_V (\varphi_t) \\
	&= V (E^{-1} A(\varphi_t)) - c_2 E^{-1} A_V(\varphi_t) + b_2 E^{-1} A(\varphi_t) A_V(\varphi_t) + b_2 E^{-1} X A_V (\varphi_t) \\
	&= V(E^{-1}) A(\varphi_t) + E^{-1} (b_2 X_\perp A (\varphi_t) + c_2 A_V (\varphi_t)) - c_2 E^{-1} A_V(\varphi_t) \dots \\
	&\qquad\qquad + b_2 E^{-1} A(\varphi_t) A_V(\varphi_t) + b_2 E^{-1} X A_V (\varphi_t) \\
	&= (K_2 -b_2 E^{-1} A_V(\varphi_t)) A(\varphi_t) + E^{-1} b_2 X_\perp A (\varphi_t) \dots\\
	&\qquad \qquad + b_2 E^{-1} A(\varphi_t) A_V(\varphi_t) + b_2 E^{-1} X A_V (\varphi_t) \\
	&= K_2 A(\varphi_t) + b_2 E^{-1} (X_\perp A(\varphi_t) + X A_V (\varphi_t) + A(\varphi_t) A_V(\varphi_t) - A_V(\varphi_t)A(\varphi_t)) \\
	&= K_2 A(\varphi_t) + b_2 E^{-1} \star F_A(\varphi_t).\end{align*}
    Note that $b_2$ is a scalar function, so we can commute it. We also have used that $\dot b_2 = -c_2$. The proof is complete.
\end{proof}

The next result, whose proof we relegate to the Appendix, is an explicit bound on the quantities $V\left( \frac{b_1}{b_2} \right)$ and $V\left( \frac{1}{b_2} \right)$. Such quantities first appeared in \cite{Pestov2004} and arise as the kernels, up to exponential map, of the error operators $W$ and $W^*$ of the geodesic ray transform without connection. In what follows, we recall that for $(M,g)$ a simple surface, we may define $C_1(M,g) := \min_{\D} \frac{|b_2(\x,v,t)|}{t} >0$ and $C_2(M,g) := \max_{\D} \frac{|b_2(\x,v,t)|}{t}>0$.

\begin{lemma} \label{lem:Wkernelbound}
    Let $(M,g)$ a simple surface with constants $C_1, C_2$ as in \eqref{eq:simpleclaim}. Then the functions $V\left( \frac{b_1}{b_2} \right)$ and $V\left( \frac{1}{b_2} \right)$ satisfy the following estimates: 
    \begin{align*}
	\left| V\left( \frac{b_1}{b_2} \right)(\x,v,t) \right|, \left| V\left( \frac{1}{b_2} \right)(\x,v,t) \right| \le \frac{\|d\kappa\|_\infty C_2^3 t^2}{12 C_1^2}, \qquad (\x,v)\in SM, \quad t\in [0,\tau(\x,v)].
    \end{align*}
\end{lemma}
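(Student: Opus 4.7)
The plan is to derive explicit integral representations for $Vb_1$ and $Vb_2$ by differentiating the Jacobi equations along $V$, to combine them into representations for $V(1/b_2)$ and $V(b_1/b_2)$, and then to estimate directly using the simplicity constants $C_1, C_2$ and $\|d\kappa\|_\infty$.

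First I would compute the key identity $V(\kappa\circ\pi\circ\varphi_t)(\x,v) = b_2(\x,v,t)\, X_\perp \kappa(\gamma_{\x,v}(t))$, immediate from $d\varphi_t(V) = b_2 X_\perp + c_2 V$ together with $d\pi(V)=0$. Differentiating $\ddot b_i + \kappa(\gamma) b_i = 0$ along $V$ for $i=1,2$, and using that $b_i|_{t=0}$ and $\dot b_i|_{t=0}$ are constant in $(\x,v)$, this gives
\begin{align*}
\ddot{(Vb_i)} + \kappa(\gamma)\, Vb_i = -b_i\, b_2\, X_\perp\kappa(\gamma), \qquad Vb_i|_{t=0} = \dot{(Vb_i)}|_{t=0} = 0.
\end{align*}
With the Wronskian $W = b_1\dot b_2 - \dot b_1 b_2 \equiv -1$, variation of parameters then produces
\begin{align*}
Vb_i(t) = -\int_0^t j(t,s)\, b_i(s)\, b_2(s)\, X_\perp\kappa(\gamma(s))\, ds, \qquad j(t,s) := b_1(t) b_2(s) - b_2(t) b_1(s).
\end{align*}

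A short algebraic manipulation combining these into $V(1/b_2) = -Vb_2/b_2^2$ and $V(b_1/b_2) = Vb_1/b_2 - b_1 Vb_2/b_2^2$ (factoring out $b_2(s)/b_2(t)^2$ in the latter) yields the compact forms
\begin{align*}
V\!\left(\frac{1}{b_2}\right)\!(t) &= \frac{1}{b_2(t)^2}\int_0^t j(t,s)\, b_2(s)^2\, X_\perp\kappa(\gamma(s))\, ds, \\
V\!\left(\frac{b_1}{b_2}\right)\!(t) &= \frac{1}{b_2(t)^2}\int_0^t j(t,s)^2\, b_2(s)\, X_\perp\kappa(\gamma(s))\, ds.
\end{align*}
The crucial observation is that for fixed $s$, the map $t\mapsto j(t,s)$ is the Jacobi field along $\gamma_{\x,v}$ with $j(s,s)=0$ and $\partial_t j(s,s)=1$, which (modulo a sign coming from $\dot b_2(0)=-1$) is precisely the function $b$ of \eqref{eq:simpleclaim} attached to the shifted initial condition $(\gamma(s),\dot\gamma(s))\in SM$, evaluated at parameter $t-s$; since $\tau(\gamma(s),\dot\gamma(s)) = \tau(\x,v) - s$, simplicity applies uniformly and yields $|j(t,s)| \le C_2(t-s)$ for all $0\le s\le t\le \tau(\x,v)$. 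Plugging this in, together with $|b_2(u)|\le C_2|u|$ in the numerators, $|b_2(t)|\ge C_1 t$ in the denominator, and $|X_\perp\kappa|\le \|d\kappa\|_\infty$, both integrals reduce to the elementary identity $\int_0^t s(t-s)^2\, ds = \int_0^t (t-s) s^2\, ds = t^4/12$, giving exactly $C_2^3 \|d\kappa\|_\infty t^2/(12C_1^2)$.

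The main obstacle is the uniform bound $|j(t,s)|\le C_2(t-s)$: once this is recognized as the simplicity hypothesis applied to the one-parameter family of sub-geodesics $(\gamma(s),\dot\gamma(s))$, the rest is linear algebra and bookkeeping of powers of $t$. A useful sanity check is that $d\kappa\equiv 0$ kills the sources in the ODEs for $Vb_1,Vb_2$ and recovers $V(1/b_2)=V(b_1/b_2)=0$, consistent with the constant curvature case of Theorem \ref{thm:injectivity}(i).
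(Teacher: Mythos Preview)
Your proposal is correct and follows essentially the same route as the paper. The paper packages the computation in matrix form---writing $\Phi=\left(\begin{smallmatrix} b_1 & b_2 \\ c_1 & c_2 \end{smallmatrix}\right)$, deriving an ODE for $V\Phi$, and solving with $\Phi$ as integrating factor---while you work with the scalar second-order Jacobi equation and variation of parameters; these are equivalent formulations yielding identical integral representations for $Vb_1,Vb_2$, and your identification of $j(t,s)$ with (minus) $b_2(\varphi_s(\x,v),t-s)$ is exactly the cocycle relation the paper invokes to apply the simplicity bound $|j(t,s)|\le C_2(t-s)$.
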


We now prove the main result of this section, Theorem \ref{thm:estimate}.


\begin{proof}[Proof of Theorem \ref{thm:estimate}] In order to apply estimate \eqref{eq:basicest} to $W_A$ and $W_{A,\perp}$, we now bound the functions $w_A$ and $w_{A,\perp}$ using expressions in \eqref{eq:wk}. The equation satisfied by $E = E_A$ implies 
    \begin{align*}
	\frac{d}{dt} \|E(\x,v,t)\|  \le \left\|(A + A^*)/2\right\| (\varphi_t(\x,v)) \|E(\x,v,t)\|, 
    \end{align*} 
    so that we may obtain the estimate
    \begin{align*}
	\|E(\x,v,t)\| \le e^{\alpha_A t} \|E(\x,v,0)\| = \sqrt{n} e^{\alpha_A t}.
    \end{align*}
    The same estimate holds for $E^{-1}$. Integrating \eqref{eq:ODEK} using $E$ as integrating factor, we deduce the following integral representations (we keep $(\x,v)$ implicit)
    \begin{align*}
	K_\ell(t) &= \int_0^t b_\ell(s) E^{-1}(s) \star F_A (\varphi_s) E(s)\ ds\ E^{-1}(t), \qquad \ell = 1,2, \\
	(b_2 K_1 - b_1 K_2)(t) &= \int_0^t (b_2(t)b_1(s)-b_1(t)b_2(s)) E^{-1}(s) \star F_A (\varphi_s) E(s)\ ds\ E^{-1}(t) \\
	&= \int_0^t b_2(\varphi_s,t-s) E^{-1}(s) \star F_A (\varphi_s) E(s)\ ds\ E^{-1}(t),
    \end{align*}
    where in the last equality, we have used \eqref{eq:cocycle} (proved in the appendix). We now bound the Frobenius norm of the left hand sides, using submultiplicativity of $\|\cdot\|_F$:
    \begin{align*}
	\|K_2\| (t) &\le \int_0^t |b_2|(s) \|E^{-1}(s)\| \|\star F_A(\varphi_s)\| \|E(s)\|\ ds \|E^{-1}(t)\| \\
	&\le \|\star F_A\|_\infty \int_0^t (C_2 s) (\sqrt{n} e^{\alpha_A s})^2\ ds \sqrt{n} e^{\alpha_A t} \le  n^{3/2} e^{3\alpha_A \tau_\infty} C_2 \|\star F_A\|_\infty \frac{t^2}{2}. 
    \end{align*}
    Similarly, using that $|b_2(\varphi_s,t-s)|\le C_2(t-s)$, we can arrive at the exact same bound for $\|b_2K_1-b_1K_2\| (t)$. Given the form of $w_A$ and $w_{A,\perp}$ in \eqref{eq:wk}, and the fact that bounds on $V\left( \frac{b_1}{b_2} \right)$ and $V\left( \frac{1}{b_2} \right)$ are the same and bounds on $\|K_2\|$ and $\|b_2 K_1 - b_1 K_2\|$ are the same, this will yield the same bound on $w_A$ or $w_{A,\perp}$. Therefore, let us focus on $w_{A,\perp}$: using the previous bound together with Lemma \ref{lem:Wkernelbound}
    \begin{align*}
	\|w_{A,\perp}(\x,v,t)\| &\le \frac{1}{|b_2|(t)} \|K_2(t)\| + \left| V\left( \frac{1}{b_2} \right)\right| \|E^{-1}(t)\| \\
	&\le \frac{n^{3/2}}{2} e^{3\alpha_A \tau_\infty} \frac{C_2}{C_1} \|\star F_A\|_\infty t + \|d\kappa\|_\infty \frac{C_2^3}{12 C_1^2} t^2 \sqrt{n} e^{\alpha_A \tau_\infty}. 
    \end{align*}
    Using $(a+b)^2\le 2(a^2 + b^2)$ to bound $\|w_{A,\perp}(\x,v,t)\|^2$ and using \eqref{eq:basicest}, we arrive at
    \begin{align*}
	\|W_{A,\perp}\|^2_{L^2\to L^2} &\le \frac{1}{4\pi^2} \int_M \int_{S_\x} \int_0^{\tau(\x,v)} \frac{\|w_{A,\perp}(\x,v,t)\|^2}{|b_2(\x,v,t)|}\ dt\ dS(v)\ dM_\x \\
	&\le \frac{\Vol M}{2\pi} \int_0^{\tau_\infty} \left(\|\star F_A\|_\infty^2 n^3 e^{6\alpha_A \tau_\infty} \frac{C_2^2}{C_1^2} \frac{t^2}{C_1 t}  + \|d\kappa\|_\infty^2 \frac{C_2^6}{6C_1^4} \frac{t^4}{C_1 t} n e^{2\alpha_A \tau_\infty} \right)\ dt.
    \end{align*}
    Therefore, \eqref{eq:WAest} holds with 
    \begin{align*}
	C = n^3 e^{6\alpha_A \tau_\infty} \frac{C_2^2}{C_1^3} \frac{\tau_\infty^2}{2}, \quad \text{and} \quad C' = n e^{2\alpha_A \tau_\infty} \frac{C_2^6}{C_1^5} \frac{\tau_\infty^4}{24},
    \end{align*}
    valid both for $W_A$ and $W_{A,\perp}$ as explained above. Theorem \ref{thm:estimate} is proved.
\end{proof}


\section{Injectivity equivalences and implications} \label{sec:reduction}

The purpose of this section is twofold. It first clarifies the relation between the transform $I_{A}$ restricted to one-forms, and the transform $I_{A,\perp}$. Second, it serves as preparation for the range characterization results stated in the next section. 

\subsection{On the range decomposition of $I_A$}
\label{subsec:decom}

We now prove that the range of $I_{A}$ acting on 1-forms (i.e. acting on $\Omega_{-1}\oplus\Omega_{1}$) decomposes into $(i)$ the range of $I_{A,\perp}$ defined on $H^1_0$ and $(ii)$ the ranges of $I_A$ restricted to $\ker^{\pm 1} \mu^*_{\pm} := \Omega_{\pm 1} \cap \ker \mu^*_{\pm}$ (or their $L^2$ versions), and that the sum vanishes if and only if all three components vanish.

The first thing to observe is the following lemma which follows right away form the ellipticity of $\mu_{\pm}$. 
\begin{lemma}\label{lem:decomp}
    Let $(M,g)$ be a Riemannian surface with boundary and $A$ a $C^1$ connection. The following decompositions hold, orthogonal for the $L^2(SM,\mathbb{C}^{n})$ inner product (hence unique): 
    \begin{itemize}
	\item[$(i)$] For every $f\in \Omega_1$, there exists $v\in \Omega_0$ with $v|_{\partial M} = 0$ and $g_1\in \ker^1 \mu_+^*$ such that $f = \mu_+ v + g_1$. 
	\item[$(ii)$] For every $f\in \Omega_{-1}$, there exists $v\in \Omega_0$ with $v|_{\partial M} = 0$ and $g_{-1}\in \ker^{-1} \mu_-^*$ such that $f = \mu_- v + g_{-1}$. 
    \end{itemize}
\end{lemma}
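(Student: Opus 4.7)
The plan is to use the ellipticity of $\mu_+$ (apparent from the local formula \eqref{eq:mu}, which realizes $\mu_\pm:\Omega_k\to\Omega_{k\pm 1}$ as a first-order operator of Cauchy--Riemann type on $C^\infty(M,\Cm^n)$) in order to set up and solve an elliptic Dirichlet boundary value problem. For case (i), given $f\in \Omega_1$, I would identify $\Omega_0$ with $C^\infty(M,\Cm^n)$ and consider
\begin{align*}
\mu_+^*\mu_+\, v = \mu_+^* f \quad \text{in } M, \qquad v|_{\partial M}=0.
\end{align*}
Since $\mu_+^* = -\mu_-^{-A^*}$, the operator $\mu_+^*\mu_+$ is second-order elliptic with principal symbol comparable to $|\xi|^2$, and its Dirichlet realization is Fredholm (and formally self-adjoint) from $H^2\cap H^1_0$ to $L^2$.

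Solvability would follow from the Fredholm alternative. Pairing with $v$ and integrating by parts gives $\langle \mu_+^*\mu_+ v,v\rangle_{L^2(SM)} = \|\mu_+ v\|_{L^2(SM)}^2$, where the boundary terms generated by viewing $\mu_+^*$ as an $L^2(SM)$-adjoint of $\mu_+$ vanish because any $v\in \Omega_0$ with $v|_{\partial M}=0$ pulls back to a function on $SM$ vanishing on all of $\partial(SM)$. Hence the Dirichlet kernel is $K:=\{v\in H^1_0:\mu_+ v=0\}$, and the same integration by parts yields $\langle \mu_+^* f,v_0\rangle = \langle f,\mu_+ v_0\rangle = 0$ for any $v_0\in K$, so the source is orthogonal to $K$ and a solution $v\in H^1_0$ exists. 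Elliptic regularity up to the boundary upgrades it to $v\in C^\infty(M,\Cm^n)$. Setting $g_1:=f-\mu_+ v\in\Omega_1$ gives $\mu_+^* g_1=0$, i.e.\ $g_1\in \ker^1 \mu_+^*$, and the $L^2(SM,\Cm^n)$-orthogonality $\langle \mu_+ v, g_1\rangle = \langle v,\mu_+^* g_1\rangle = 0$ follows by the same integration by parts, automatically giving uniqueness of $\mu_+ v$ and $g_1$ in the decomposition. Case (ii) follows verbatim with $\mu_-$ replacing $\mu_+$.

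The only substantive point is to justify that the pairing $\langle \mu_+ v, g\rangle_{L^2(SM)} = \langle v,\mu_+^* g\rangle_{L^2(SM)}$ holds without boundary correction whenever $v$ is a pullback from $M$ vanishing on $\partial M$: this reduces to checking that integration by parts for $X+A$ on $SM$ produces only boundary terms supported on $\partial(SM)$, which vanish on any such $v$. Everything else amounts to standard elliptic boundary value theory applied to the local form of $\mu_+^*\mu_+$ read off from \eqref{eq:mu}.
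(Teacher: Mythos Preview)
Your proposal is correct and is precisely the standard elliptic argument the paper alludes to: the paper offers no detailed proof, stating only that the lemma ``follows right away from the ellipticity of $\mu_{\pm}$,'' and your Dirichlet problem for $\mu_+^*\mu_+$ together with the Fredholm alternative is exactly how one unpacks that remark.
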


Lemma \ref{lem:decomp} implies that any one-form $\omega = \omega_1 + \omega_{-1}$ decomposes uniquely as follows: write $\omega_1 = \mu_+ a + g_1$ and $\omega_{-1} = \mu_- b + g_{-1}$, with $a,b \in H^1_0(M)$
and $g_{\pm 1}\in \ker^{\pm 1} \mu_{\pm}^*$. Upon defining $g_p := (a+b)/2$ and $g_s := (i(a-b)/2)$, the sum can be rewritten as 
\begin{equation}
    \omega_1 + \omega_{-1} = g_{-1} + (\mu_+ + \mu_-) g_p + \frac{\mu_+ - \mu_-}{i} g_s + g_1. 
    \label{eq:decom1}
\end{equation}
The transport equation 
\begin{align*}
    Xu + Au = - \omega_1 - \omega_{-1},
\end{align*}
can then be rewritten as 
\begin{align*}
    (X+A) (u + g_p) = - (g_{-1} + (X_\perp - A_V) g_s + g_1),
\end{align*}
where the functions $u$ and $u + g_p$ agree on $\partial SM$ so that 
\begin{align*}
    I_A [\omega_{-1} + \omega_1] = I_A g_{-1} + I_{A,\perp} g_s + I_A g_1.
\end{align*}

We will say that $I_{A}$ acting on 1-forms is {\it solenoidal injective} if whenever $I_{A}(\omega)=0$, there is smooth $p:M\to\mathbb{C}^{n}$ with $p|_{\partial M}=0$ such that $\omega=d_{A}p=dp+Ap$.
Lemma \ref{lem:decomp} implies the following: 
\begin{lemma}\label{lem:equivalence}
  For any $C^1$ connection $A$, $I_A$ is solenoidal injective on one-forms if and only if, for any $f\in C_0^1(M)$, $g_1\in \ker^{1} \mu_+^*$ and $g_{-1} \in \ker^{-1} \mu_-^*$, $I_A g_{-1} + I_{A,\perp} f + I_A g_1 = 0$ implies $f = g_1 = g_{-1} = 0$. 
\end{lemma}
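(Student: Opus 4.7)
The strategy is to leverage the decomposition \eqref{eq:decom1} and the uniqueness/orthogonality of the splitting in Lemma \ref{lem:decomp}. The key observation, to be used throughout, is that for any $p\in C^\infty(M,\mathbb{C}^n)$ with $p|_{\partial M}=0$, one has $d_A p = (\mu_++\mu_-)p = (X+A)p$ on $SM$, and therefore $I_A(d_A p) = -B_{A,-}(p\circ\pi)|_{\partial SM} = 0$ by the fundamental theorem along geodesics.

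\emph{Direction $(\Leftarrow)$.} Suppose the stated implication holds, and let $\omega=\omega_{-1}+\omega_1$ be a smooth one-form with $I_A\omega=0$. Apply Lemma \ref{lem:decomp} to each of $\omega_{\pm 1}$ and rearrange as in \eqref{eq:decom1} to write
\[
\omega = g_{-1} + (\mu_++\mu_-)g_p + \tfrac{\mu_+-\mu_-}{i}g_s + g_1,
\]
with $g_p,g_s\in C^\infty\cap\Omega_0$ vanishing on $\partial M$ and $g_{\pm 1}\in \ker^{\pm 1}\mu_\pm^*$. Since the middle term is $d_A g_p$, it is in the kernel of $I_A$ by the observation above, so $I_A g_{-1}+I_{A,\perp}g_s+I_A g_1=0$. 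By hypothesis, $g_s=g_1=g_{-1}=0$, hence $\omega=d_A g_p$, showing solenoidal injectivity.

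\emph{Direction $(\Rightarrow)$.} Conversely, assume $I_A$ is solenoidal injective and that $I_A g_{-1}+I_{A,\perp}f+I_A g_1=0$ with $f\in C_0^1(M,\mathbb{C}^n)$ and $g_{\pm 1}\in\ker^{\pm 1}\mu_\pm^*$. Writing $(X_\perp-A_V)(f\circ\pi)=\tfrac{1}{i}(\mu_+-\mu_-)f$, define the one-form
\[
\omega := g_{-1} - i\,\mu_- f + i\,\mu_+ f + g_1,
\]
which satisfies $I_A\omega=0$. Solenoidal injectivity produces a smooth $p$ with $p|_{\partial M}=0$ and $\omega = \mu_+ p+\mu_- p$. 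Projecting onto $\Omega_{\pm 1}$ gives
\[
g_1 = \mu_+(p-if), \qquad g_{-1} = \mu_-(p+if),
\]
where $p\pm if\in C^\infty\cap\Omega_0$ both vanish on $\partial M$ (since both $p$ and $f$ do).

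\emph{Conclusion via uniqueness.} Now apply Lemma \ref{lem:decomp}(i) to $g_1$: it provides the $L^2$-orthogonal decomposition $g_1=\mu_+ v_1+h_1$ with $v_1\in\Omega_0$ vanishing on $\partial M$ and $h_1\in\ker^1\mu_+^*$. But $g_1$ admits two such decompositions, $(v_1,h_1)=(p-if,0)$ and $(v_1,h_1)=(0,g_1)$, so uniqueness forces $p-if=0$ and $g_1=0$. Analogously, Lemma \ref{lem:decomp}(ii) applied to $g_{-1}$ forces $p+if=0$ and $g_{-1}=0$. Combining $p=if$ and $p=-if$ yields $p=f=0$, completing the proof. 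The only mildly delicate point is the use of uniqueness in the orthogonal decomposition of Lemma \ref{lem:decomp}, which requires ensuring $p\pm if$ lies in the correct function space (vanishing on $\partial M$); this is immediate from $f\in C_0^1$ and $p|_{\partial M}=0$.
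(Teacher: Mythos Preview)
Your argument is essentially the paper's own proof; both directions proceed identically via the decomposition \eqref{eq:decom1} and the uniqueness in Lemma~\ref{lem:decomp}. One harmless sign slip: since $(X_\perp-A_V)f=\tfrac{1}{i}(\mu_+-\mu_-)f=-i\mu_+ f+i\mu_- f$, your $\omega$ should be $g_{-1}+i\mu_- f-i\mu_+ f+g_1$ (otherwise $I_A\omega=I_A g_{-1}-I_{A,\perp}f+I_A g_1\neq 0$ in general); with this correction the projections read $g_1=\mu_+(p+if)$, $g_{-1}=\mu_-(p-if)$ and the conclusion is unchanged.
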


\begin{proof} $(\implies)$ Suppose $I_A$ solenoidal injective and assume that $I_A g_{-1} + I_{A,\perp} f + I_A g_1 = 0$. Then from solenoidal injectivity, this means that there exists a function $h$ defined on $M$ vanishing on $\partial M$ such that 
  \begin{align*}
    (X+A) h = g_1 + (X_\perp - A_V) f + g_{-1}, 
  \end{align*}
  rewritten differently this means that $(\mu_+ + \mu_-)h = g_1 - i (\mu_+ - \mu_-) f + g_{-1}$, which upon projecting onto Fourier modes $1$ and $-1$, implies
  \begin{align*}
    \mu_+ (h+if) - g_1 = 0 = \mu_- (h-if) - g_{-1}. 
  \end{align*}
  Uniqueness of such decompositions implies $h+if = h-if = g_1 = g_{-1} = 0$, so $f = h =0$. \\
  \noindent $(\impliedby)$ Let $\omega$ be such that $I_A = 0$. Using Lemma \ref{lem:decomp}, we can write 
  \begin{align*}
    \omega = (X+A) g_p + (X_\perp - A_V) g_s + g_{-1} + g_{1},
  \end{align*}
  with $g_p, g_s$ functions on $M$ vanishing at $\partial M$ and $g_{\pm 1} \in \ker^{\pm 1} \mu_{\pm}^*$. Then 
  \begin{align*}
    0 = I_A \omega = I_A [(X_\perp - A_V) g_s + g_{-1} + g_{1}],
  \end{align*}
  which by assumption implies $g_s = g_{-1} = g_1 = 0$, thus $\omega = (X+A) g_p$. Hence $I_A$ is solenoidal injective on one-forms.   
\end{proof}

Given a $\mathbb{C}^{n}$-valued 1-form $\omega=\omega_{-1}+\omega_{1}$ there is an alternative decomposition
to \eqref{eq:decom1} which uses slightly different boundary conditions.
For this one considers the elliptic operator 
$$D: C^{\infty}_{0}(M,\mathbb{C}^{n})\times C^{\infty}(M,\mathbb{C}^{n})\to \Lambda^{1}(M)$$
where $\Lambda^{1}(M)$ is the set of all $\mathbb{C}^{n}$-valued 1-forms, given by
\[D(p,f)=d_{A}p+\star d_{A}f.\]
Now let $\mathfrak H_{A}$ denote the finite dimensional space of 1-forms $h$ such that $d_{A}h=d_{A}\star h=0$ and $j^*h=0$, where $j:\partial M\to M$ is the inclusion map. 
Using $D$ it is easy to show that given $\omega\in\Lambda^{1}(M)$ there are $(p,f)\in C^{\infty}_{0}(M,\mathbb{C}^{n})\times C^{\infty}(M,\mathbb{C}^{n})$ and $h\in \mathfrak{H}_{-A^{*}}$ such that
\begin{equation}
\omega=d_{A}p+\star d_{A}f+h.
\label{eq:decom2}
\end{equation}
Note that $\mathfrak{H}_{-A^{*}}$ is the ortho-complement to the range of $D$ (compare this with \cite[Lemma 6.1]{Paternain2013a}). Observe also that we can express \eqref{eq:decom2} as 
\[\omega_{-1}+\omega_{1}=(\mu_{+}+\mu_{-})p+\frac{\mu_{+}-\mu_{-}}{i}f+h_{1}+h_{-1}\]
where $h_{\pm 1}\in  \ker^{\pm 1} \mu_{\pm}^*$,
but the difference with \eqref{eq:decom1} is that now we do not require $f$ to vanish at the boundary and instead
we have $j^{*}h=0$. We will return to this alternative decomposition after proving Theorem \ref{thm:rangeCharac}.

\subsection{Injectivity for scalar perturbations of connections}

Given a connection $A$ on a simple surface $(M,g)$, we first start by giving a characterization of the injectivity for $I_{A,0}$. Recall that a function $f$ defined on $SM$ is so-called {\em (fiberwise) holomorphic} (resp. {\em antiholomorphic}) if $(Id + iH)f = f_0$ (resp. $(Id - iH)f = f_0$).

\begin{proposition}[Characterization of injectivity of $I_{A,0}$]
    Let $A$ be a $GL(n,\mathbb{C})$-connection. Then $I_{A,0}$ is injective if and only if the following is true: for any $f,u \in C^\infty (SM,\mathbb{C}^{n})$ satisfying $(X+A)u = -f$ with $u|_{\partial SM} =0$, 
  \begin{itemize}
    \item[$(i)$] If $f$ is holomorphic and even, then $u$ is holomorphic, odd. 
    \item[$(ii)$] If $f$ is antiholomorphic and even, then $u$ is antiholomorphic and odd. 
  \end{itemize}
  \label{prop:charac3}
\end{proposition}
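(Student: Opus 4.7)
My plan is to prove both implications, with the forward direction being the substantive one, driven by the commutator identity $[H, X+A] = \pi_0(X_\perp - A_V) + (X_\perp - A_V)\pi_0$ from \eqref{eq:commutatorA}.

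\emph{Forward direction.} Fix $f$ holomorphic and even with $(X+A)u = -f$ and $u|_{\partial SM} = 0$. First I would establish that $u$ is odd using only the transport equation: since $A(\x,v)$ is linear in $v$ and $X$ swaps $v$-parity, the operator $X+A$ reverses parity; splitting $u = u_+ + u_-$ and $f = f_+$ forces $(X+A) u_+ = 0$, and $u_+|_{\partial SM} = 0$ (the antipodal map preserves $\partial SM$), so $u_+ \equiv 0$ by uniqueness along geodesics. Next, apply $(Id + iH)$ to the equation and use the commutator identity, obtaining
\begin{align*}
    (X+A)(Id+iH)u = -(Id+iH) f - i\pi_0(X_\perp - A_V) u - i(X_\perp - A_V) \pi_0 u.
\end{align*}
Holomorphy of $f$ collapses $(Id+iH)f$ to $f_0$, and the oddness just obtained kills $\pi_0 u$; setting $w := (Id+iH)u$ and $g := f_0 + i\pi_0(X_\perp - A_V) u$, a $\mathbb{C}^n$-valued function on $M$, this becomes $(X+A)w = -g$ with $w|_{\partial SM} = 0$ (since $H$ is a fiberwise operator and boundary fibers are full circles). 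The latter boundary condition means $w = u_A^{g\circ\pi}$ and $I_{A,0} g = w|_{\partial_+ SM} = 0$; injectivity yields $g = 0$, whence $(X+A) w = 0$ with $w|_{\partial SM} = 0$ forces $w = 0$ by ODE uniqueness, which is exactly the statement that $u$ is holomorphic. Part (ii) is identical with $(Id - iH)$ in place of $(Id+iH)$.

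\emph{Reverse direction.} Assume (i), (ii) and let $f \in C^\infty(M, \mathbb{C}^n)$ satisfy $I_{A,0} f = 0$. Setting $u := u_A^{f\circ\pi}$ gives $(X+A)u = -f\circ\pi$ with $u|_{\partial_- SM} = 0$ by definition and $u|_{\partial_+ SM} = I_{A,0}f = 0$, so $u|_{\partial SM} = 0$. The source $f\circ\pi \in \Omega_0$ is simultaneously even, holomorphic, and antiholomorphic, so applying (i) and (ii) both forces $u$ to be odd, holomorphic, and antiholomorphic at once. The intersection of these three conditions is $\{0\}$, hence $f = -(X+A)u = 0$, proving injectivity.

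The main obstacle is arranging the reduction to a transport equation for $w$ whose right-hand side is genuinely a \emph{zero-mode} function on $M$. In particular, the oddness of $u$ must be established \emph{before} applying the commutator formula, so that the spurious term $(X_\perp - A_V)\pi_0 u$ vanishes and the residual $g$ lives on $M$, permitting injectivity of $I_{A,0}$ to apply; everything else is routine parity and sign bookkeeping.
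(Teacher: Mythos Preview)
Your proof is correct and follows essentially the same approach as the paper: establish oddness of $u$ first via the parity-reversing property of $X+A$, then apply the Hilbert transform and use the commutator $[H,X+A]$ to reduce to a transport problem with a zero-mode source, where injectivity of $I_{A,0}$ applies. The reverse direction is identical to the paper's. The only cosmetic difference is that you apply $(Id+iH)$ while the paper writes $(Id-iH)$; given the stated convention that $f$ holomorphic means $(Id+iH)f=f_0$, your choice is the one consistent with the definitions.
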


\begin{proof}
    $(\implies)$ Suppose $I_{A,0}$ injective. We only prove $(i)$, as $(ii)$ is similar. Let $u, f$ as in the statement with $f$ holomorphic. Then $(Id - iH)f = f_0$. Moreover, projecting the transport equation onto odd harmonics, we obtain $(X+A)u_+ = 0$ with boundary condition $u_+|_{\partial SM} = 0$, hence $u_+ = 0$, thus $u$ is odd. We then compute
  \begin{align*}
    (X+A) (Id - iH)u &= (Id - iH) (X+A)u  -i [X+A,H] u \\
    &= - f_0 + \cancel{i (X_\perp - A_V) u_0} + i ((X_\perp - A_V)u)_0,
  \end{align*}
  which upon integrating along geodesics implies that $I_{A,0} [f_0 -i ((X_\perp - A_V)u)_0 ] = 0$. By assumption, this implies $f_0 + i((X_\perp - A_V)u)_0 = 0$. In particular, $(X+A)(Id-iH)u = 0$ with $(Id-iH)u|_{\partial SM} = 0$, hence $(Id-iH)u = 0$, which means that $u$ is holomorphic, hence the proof. \\
  $(\impliedby)$ Suppose $(i),(ii)$ are satisfied. Let $f$ be a smooth function such that $I_{A,0} f = 0$, then there exists $u:SM\to \Cm^n$ with $u|_{\partial SM} = 0$ and such that $(X+A)u = -f$. $f$ is even, both holomorphic and antiholomorphic, thus by $(i)$ and $(ii)$, $u$ is odd, both holomorphic and antiholomorphic, thus $u=0$, hence $f = 0$. Proposition \ref{prop:charac3} is proved. 
\end{proof}

The next result relies on the key concept of holomorphic integrating factor for scalar connections, which we now recall. Given a one-form $\omega$, there exists $v:SM\to \Cm$ holomorphic, even solution of $Xv = - \omega$. This is based on injectivity of the unattenuated transform $I_0$, cf. \cite[Theorem 4.1]{Paternain2012}. The construction goes as follows. First one may write $\omega = Xf + X_\perp g$ for $g$ vanishing at $\partial M$. Then we are left looking for $v$ such that $X(v+f) = - X_\perp g$. One can construct $u = (Id + iH) h_\psi$ with $h_\psi$ even such that 
\begin{align*}
  - X_\perp g = Xu = X(Id + iH) h_\psi = -i [H,X] h_\psi = - i X_\perp (h_\psi)_0. 
\end{align*}
By surjectivity of $I_0^*$, one can find $h$ such that $I_0^* h =  2\pi (h_\psi)_0 = -2\pi ig$ and for such an $h$, the function $v = -f + (Id + iH) h_\psi$ is a holomorphic, even solution of $Xv = -\omega$. As a result, the functions $e^v$ and $e^{-v}$ are non-vanishing holomorphic, even, solutions of $X e^{\pm v} \pm \omega e^{\pm v} = 0$. Using the same $h$, we can then construct $w = -f - (Id - iH)h_\psi$, anti-holomorphic solution of $Xw = -\omega$ giving rise to anti-holomorphic integrating factors $e^{\pm w}$ solutions of $X e^{\pm w} \pm \omega e^{\pm w} = 0$.  

With the use of such integrating factors, we are then able to establish the following.
\begin{proposition}\label{prop:transl2}
    For any $GL(n,\mathbb{C})$-connection $A$, if $I_{A,0}$ is injective, then for any smooth one-form $\omega$, so is $I_{A + \omega \Imm_n, 0}$.
\end{proposition}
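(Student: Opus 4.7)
The plan is to apply the injectivity characterization in Proposition \ref{prop:charac3} to the connection $A + \omega \Imm_n$, reducing matters to the corresponding property for $A$ via a scalar holomorphic integrating factor that absorbs the $\omega \Imm_n$ perturbation. Concretely, it suffices to show: for any $f \in C^\infty(SM,\Cm^n)$ and any $u \in C^\infty(SM,\Cm^n)$ with $u|_{\partial SM} = 0$ solving $(X + A + \omega \Imm_n) u = -f$, if $f$ is holomorphic (resp.\ antiholomorphic) and even, then $u$ is holomorphic (resp.\ antiholomorphic) and odd. Given this, the $(\impliedby)$ direction of Proposition \ref{prop:charac3}, applied to the connection $A + \omega\Imm_n$, yields injectivity of $I_{A+\omega\Imm_n,0}$.

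For the holomorphic case, I would invoke the construction recalled immediately before the statement to produce a non-vanishing, even, holomorphic scalar function $e^{v} \in C^\infty(SM,\Cm)$ satisfying $X e^{v} + \omega e^{v} = 0$. Its reciprocal $e^{-v}$ is then also smooth, non-vanishing, even, and holomorphic (being a series in non-negative Fourier modes of an even holomorphic function). Setting $\tilde u := e^{-v} u$, a direct product-rule computation using that the scalar $e^{-v}$ commutes with the matrix $A$ gives
\[
(X + A)\tilde u = -e^{-v} f, \qquad \tilde u|_{\partial SM} = 0.
\]
Since $e^{-v} f$ is holomorphic and even, the hypothesis that $I_{A,0}$ is injective combined with the $(\implies)$ direction of Proposition \ref{prop:charac3} yields that $\tilde u$ is holomorphic and odd. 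Therefore $u = e^{v} \tilde u$ is a product of an even holomorphic function and an odd holomorphic function, hence itself holomorphic and odd.

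The antiholomorphic case proceeds identically, replacing $e^{v}$ by the even antiholomorphic integrating factor $e^{w}$ with $X e^{w} + \omega e^{w} = 0$ (again available from the construction recalled before the statement), and invoking the antiholomorphic half of Proposition \ref{prop:charac3} applied to $I_{A,0}$.

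The only substantive point to flag is the scalarity of the perturbation: because $\omega \Imm_n$ is a scalar multiple of the identity matrix, the scalar factor $e^{\pm v}$ commutes with the matrix connection $A$, so the conjugation $u \mapsto e^{-v} u$ cleanly eliminates $\omega$ while preserving $A$. If $\omega$ were matrix-valued and non-commuting with $A$, one would instead need a \emph{matrix-valued} holomorphic integrating factor for $\omega$, which is precisely the essentially open obstruction underlying the general injectivity question for $I_{A,0}$ on simple surfaces.
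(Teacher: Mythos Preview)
Your proof is correct and follows essentially the same approach as the paper: invoke the characterization of injectivity in Proposition~\ref{prop:charac3}, absorb the scalar perturbation $\omega\Imm_n$ via a holomorphic (resp.\ antiholomorphic) even integrating factor $e^{v}$, and apply the characterization for $A$ to $e^{-v}u$ to conclude. The only cosmetic difference is that the paper first remarks separately that $u$ is odd (since $u_+$ solves the homogeneous transport problem with zero boundary data), whereas you obtain oddness as a byproduct of the characterization applied to $\tilde u$ together with the parity of $e^{v}$; both routes are valid.
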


\begin{proof} Suppose $I_{A,0}$ injective and let $\omega$ be a one-form. We use the characterization from Proposition \ref{prop:charac3} to show that $I_{A+\omega \Imm_n,0}$ is injective by satisfying $(i), (ii)$. Let $u,f$ be such that $(X+A +\omega)u = -f$ with $u|_{\partial SM} = 0$. If $f$ is holomorphic even, then $u$ is odd since $(X+A+\omega)u_+ = 0$ with zero boundary condition. Let $e^v$ a holomorphic, even, integrating factor for $\omega$, then we can recast $(X+A + \omega)u = -f$ as $(X+A) (e^{-v}u) = - e^{-v}f$, where $e^{-v} f$ is holomorphic, even and $e^{-v}u$ vanishes at $\partial SM$. Then since $A$ satisfies $(i)$, this implies that $e^{-v}u$ is holomorphic, odd, and hence $u = e^{v}(e^{-v}u)$ is holomorphic, odd. The proof of $(ii)$ is similar.     
\end{proof}

Such a result allows to derive injectivity results for several restrictions of $I_A$ to other subspaces of $C^\infty(SM)$, as they amount to studying transforms with connections which are translated from one another by a scalar one-form. Here and below, we denote $I_{A,k}$ the transform $I_A$ restricted to $\Omega_k$. 

\begin{proposition}\label{prop:shift}
    Suppose $I_{A,0}$ injective, then the following conclusions hold.
    \begin{itemize}
      \item[$(i)$] For any $k\in \Zm$, the transform $I_{A,k}$ is injective.
      \item[$(ii)$] $I_A$ is solenoidal injective over one-forms. In particular, $I_{A,\perp}$ is injective.  
    \end{itemize} 
\end{proposition}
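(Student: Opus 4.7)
The plan is to deduce (i) from Proposition \ref{prop:transl2} via a conjugation by a power of the holomorphic frame $q$, and to deduce (ii) from (i) by a truncated-holomorphic argument on the unit tangent bundle.

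For (i), let $q\in\Omega_1$ be the non-vanishing frame from Section \ref{sec:notation}. Since $\eta_{\pm}:\Omega_j\to\Omega_{j\pm 1}$ and $X=\eta_++\eta_-$, one has $Xq\in\Omega_0\oplus\Omega_2$, whence $q^{-1}Xq\in\Omega_{-1}\oplus\Omega_1$ is a scalar one-form. Consequently $q^{-k}Xq^k=k\,q^{-1}Xq$ is a scalar one-form for every $k\in\Zm$, and Proposition \ref{prop:transl2} yields injectivity of $I_{B_k,0}$ for the shifted connection $B_k:=A+k\,q^{-1}Xq\cdot\Imm_n$. Now given $f\in\Omega_k$ with $I_{A,k}f=0$, write $f=g\,q^k$ with $g\in C^\infty(M,\Cm^n)$ and let $u$ solve $(X+A)u=-f$ with $u|_{\partial SM}=0$. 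The substitution $v:=q^{-k}u$ is smooth, vanishes on $\partial SM$, and a short computation using $Xq^k=kq^{k-1}Xq$ shows $(X+B_k)v=-g$. The fundamental theorem of calculus along geodesics then gives $I_{B_k,0}(g)=-B_{B_k,-}v|_{\partial SM}=0$, and the injectivity of $I_{B_k,0}$ forces $g=0$, hence $f=0$.

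For (ii), by Lemma \ref{lem:equivalence} it suffices to show that any smooth one-form $\omega$ with $I_A\omega=0$ equals $d_Ap=(X+A)p$ for some $p\in C^\infty(M,\Cm^n)$ with $p|_{\partial M}=0$. Let $u$ solve $(X+A)u=-\omega$ with $u|_{\partial SM}=0$. Since $\omega$ is fiberwise odd in $v$ while $X+A$ swaps $v$-parity, the odd part $u_{\mathrm{odd}}$ satisfies $(X+A)u_{\mathrm{odd}}=0$ with vanishing boundary trace; integration along geodesics gives $u_{\mathrm{odd}}=0$, so $u$ has only even fiber harmonics. Define the truncated holomorphic lift
\[
    h := (\mathrm{Id}+iH)u - u_0 \;=\; 2\sum_{k\ge 1} u_{2k}.
\]
Since $H$ is fiberwise and $u|_{\partial SM}=0$, we have $h|_{\partial SM}=0$. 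Projecting $(X+A)h=\sum_k(\mu_+h_{k-1}+\mu_-h_{k+1})$ harmonic-by-harmonic: for $j\ge 1$ the $\Omega_{2j+1}$-component equals $2(\mu_+u_{2j}+\mu_-u_{2j+2})=-2\omega_{2j+1}=0$, and only the $\Omega_1$-component $2\mu_-u_2$ survives. Thus $(X+A)h=2\mu_-u_2\in\Omega_1$, and the identity $I_A[(X+A)h]=-B_{A,-}h|_{\partial SM}=0$ gives $I_{A,1}(\mu_-u_2)=0$. By part (i), $\mu_-u_2=0$, so the $\Omega_1$-projection of the transport equation yields $\omega_1=-\mu_+u_0$. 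The symmetric argument with $(\mathrm{Id}-iH)u-u_0$ produces $\mu_+u_{-2}=0$ and $\omega_{-1}=-\mu_-u_0$. Summing, $\omega=-(\mu_++\mu_-)u_0=d_A(-u_0)$ with $-u_0\in C^\infty(M,\Cm^n)$ vanishing on $\partial M$, proving solenoidal injectivity. The injectivity of $I_{A,\perp}$ is then the specialization $g_{\pm 1}=0$ in Lemma \ref{lem:equivalence}.

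The main delicate step is the harmonic bookkeeping in (ii): verifying that $(X+A)h$ collapses to the single term $2\mu_-u_2$ uses crucially that $\omega$ has only $\pm 1$ fiber harmonics, which forces all higher-order projections $((X+A)u)_{2j+1}$ with $j\ge 1$ to vanish. Everything else amounts to routine checking that fiberwise operations (Hilbert transform, parity splitting, multiplication by $q^{\pm k}$) interact correctly with the boundary trace and with the transport equation.
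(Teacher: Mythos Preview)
Your proof of (i) is correct and matches the paper's argument essentially verbatim: conjugate by $q^{-k}$ to trade $I_{A,k}$ for $I_{A+kq^{-1}Xq\,\Imm_n,0}$, then invoke Proposition~\ref{prop:transl2}.

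Your proof of (ii) is also correct but takes a genuinely different route. The paper multiplies $u$ by $q$ and a holomorphic integrating factor $e^{-v}$ for the scalar one-form $-q^{-1}Xq$, so that $e^{-v}qu$ solves a transport equation with holomorphic even right-hand side; it then appeals to the characterization in Proposition~\ref{prop:charac3} to conclude $e^{-v}qu$ is holomorphic, hence $u$ has no harmonic content below $-1$, and by evenness and a symmetric antiholomorphic argument, $u=u_0$. You avoid integrating factors and Proposition~\ref{prop:charac3} entirely: your Hilbert-truncation $h=(\mathrm{Id}+iH)u-u_0$ isolates a single surviving mode $2\mu_-u_2\in\Omega_1$, and you kill it directly with part (i) via $I_{A,1}$. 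This is arguably more self-contained, and it is enough to identify $\omega=d_A(-u_0)$ without ever proving the stronger fact $u=u_0$ that the paper obtains. The trade-off is that the paper's approach yields more information about the transport solution, while yours reaches solenoidal injectivity with less machinery.
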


\begin{remark}
   {\rm  In particular, both statements imply that $I_{A,k}|_{\ker^k \mu_+^*}$ and $I_{A,-k}|_{\ker^{-k} \mu_-^*}$ are both injective for every $k=0,1,2\dots$.  However this can be proved to always hold, see Proposition \ref{prop:jet} below.   }
\end{remark}

\begin{proof} Suppose $I_{A,0}$ injective. \\
  {\bf Proof of $(i)$.} Let $f\in \Omega_k$ such that $I_{A,k} f = 0$. Write $f = q^k \tilde{f}$ for $q$ a non-vanishing section of $\Omega_1$ and $\tilde{f}:M\to \Cm^n$. Then if $u$ is the unique solution to
  \begin{align*}
      (X+A)u = -q^k \tilde f\quad (SM), \qquad u|_{\partial_- SM} = 0, \qquad u|_{\partial_+ SM} = I_{A,k} f, 
  \end{align*}
  the function $q^{-k} u$ satisfies
  \begin{align*}
      (X + A + kq^{-1}Xq) (q^{-k}u) = -\tilde{f}\quad (SM), \qquad q^{-k}u|_{\partial_- SM} = 0,
  \end{align*}
  so that $(q^{-k} u)|_{\partial_+ SM} = I_{A+kq^{-1} Xq \Imm_n, 0} \tilde{f}$. In particular, this implies that 
  \begin{align*}
      I_{A+kq^{-1} Xq \Imm_n, 0} \tilde{f} = q^{-k}|_{\partial_+ SM} I_{A,k} f = 0.
  \end{align*}
  Since $I_{A+kq^{-1} Xq \Imm_n, 0}$ is injective by virtue of Proposition \ref{prop:transl2}, then $\tilde{f} = 0$, hence $f = 0$. \\
  {\bf Proof of $(ii)$.} Suppose $I_A (\omega_1 + \omega_{-1}) = 0$, then there exists $u$ such that $(X+A)u = - \omega_{-1} - \omega_1$ with $u|_{\partial SM} =0$. In particular, $u$ is even since $u_-$ is a first integral of $X+A$ vanishing at $\partial SM$. If $q\in \Omega_1$ is non-vanishing, the equation $(X+A)u = -\omega_{-1} - \omega_1$ can be rewritten as 
  \[ (X+A - q^{-1} Xq) (qu) = - q(\omega_{-1} + \omega_1).  \]
  If $e^v$ is a holomorphic, even, solution of $X e^v - q^{-1}Xq e^v = 0$, then this equation can be rewritten as
  \[ (X+A) (e^{-v} qu) = - e^{-v}q(\omega_{-1} + \omega_1), \qquad (e^{-v}qu)|_{\partial SM} = 0,    \]
  and since the right hand side is holomorphic and even, then by injectivity of $I_{A,0}$, $e^{-v}qu$ is holomorphic and odd. Then $u = q^{-1} e^v (e^{-v} qu)$ has harmonic content no less than $-1$ and since $u$ is even, $u_{-1} = 0$ as well, so $u$ is holomorphic. Similarly using an antiholomorphic integrating factor, one may show that $u$ is antiholomorphic, so we conclude that $u = u_0$ with $u_0|_{\partial M} = (u|_{\partial SM})_0 = 0$, and the relation $(X+A)u_0 = - \omega_1 - \omega_{-1}$ implies that $I_{A}$ is solenoidal injective over one-forms. The proof is complete. 
\end{proof}

Finally, the next two propositions aim at showing that $I_{A,\perp}$ injective implies that $I_{A,0}$ injective. 

\begin{proposition}[Characterization of injectivity of $I_{A,\perp}$]
    Let $A$ be a smooth $GL(n)$ connection. Then $I_{A,\perp}:C_0^\infty(M)\to C^\infty(\partial_+ (SM))$ is injective if and only the following is true: for any $f,u \in C^\infty (SM)$ satisfying $(X+A)u = -f$ with $u|_{\partial SM} =0$, $f$ odd and $u$ even, 
  \begin{itemize}
      \item[$(i)$] If $f_k = 0$ for all $k< -1$ and $f_{-1}\perp \ker^{-1} \mu_-^*$, then $u$ is holomorphic. 
    \item[$(ii)$] If $f_k = 0$ for all $k> 1$ and $f_1 \perp \ker^1 \mu_+^*$, then $u$ is antiholomorphic. 
  \end{itemize}
  \label{prop:characIperp}
\end{proposition}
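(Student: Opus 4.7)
The argument parallels the proof of Proposition \ref{prop:charac3}: there, applying $(Id\mp iH)$ to the transport equation and using that $u$ was odd reduced the matter to injectivity of $I_{A,0}$ on an $\Omega_0$-source. For Proposition \ref{prop:characIperp}, $f$ is odd and $u|_{\partial SM}=0$ forces $u$ to be even, so the commutator calculation produces a leftover $i(X_\perp-A_V)u_0$. The right test functions are now $(Id-iH)u - u_0 = 2u_{<0}$ and $(Id+iH)u - u_0 = 2u_{>0}$, whose vanishing is exactly the holomorphicity/antiholomorphicity of $u$; the residual source after commutation with $X+A$ then lives in $\Omega_{\pm 1}$, which is the natural range for $I_{A,\perp}$ to act on.

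\textbf{Forward direction, case $(i)$.} Assume $I_{A,\perp}$ injective and set $v := (Id-iH)u - u_0 = 2u_{<0}$. Since $u|_{\partial SM} = 0$ forces every Fourier mode of $u$ to vanish on $\partial M$, one has $v|_{\partial SM}=0$. Using the commutator $[H,X+A] = \pi_0(X_\perp-A_V) + (X_\perp-A_V)\pi_0$, the oddness of $(X_\perp - A_V)u$ (which kills $\pi_0(X_\perp-A_V)u$), and the fact that $(Id-iH)f = 2f_{-1}$ under the Fourier hypothesis on $f$, a direct computation rewritten via $\mu_\pm$ gives
\[ (X+A)v = -2(f_{-1} + \mu_- u_0). \]
By $f_{-1}\perp \ker^{-1}\mu_-^*$ and Lemma \ref{lem:decomp}, there exists $w\in C_0^\infty(M)$ with $\mu_- w = f_{-1}$, so the right-hand side equals $-2\mu_-(w+u_0)$. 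Writing $\mu_- = \tfrac12((X+A) - i(X_\perp-A_V))$, the fundamental theorem of calculus along geodesics (whose boundary contribution dies because $w, u_0$ vanish on $\partial M$) yields $I_A[\mu_-(w+u_0)] = -\tfrac{i}{2}I_{A,\perp}(w+u_0)$. Since $v|_{\partial SM}=0$ forces $I_A$ of the source to vanish, injectivity of $I_{A,\perp}$ gives $w+u_0=0$, hence $(X+A)v = 0$ with zero boundary, so $v=0$ and $u$ is holomorphic. Case $(ii)$ is symmetric, using $v = (Id+iH)u - u_0 = 2u_{>0}$, the identity $I_A[\mu_+\phi] = \tfrac{i}{2}I_{A,\perp}(\phi)$ for $\phi\in C_0^\infty(M)$, and the splitting $f_1 = \mu_+ w$ granted by $f_1\perp \ker^1\mu_+^*$.

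\textbf{Backward direction.} Given $\tilde f\in C_0^\infty(M)$ with $I_{A,\perp}\tilde f = 0$, set $u := u_A^{(X_\perp - A_V)\tilde f}$. Then $f := -(X+A)u = (X_\perp-A_V)\tilde f = -i(\mu_+ - \mu_-)\tilde f$ has Fourier modes only in $\Omega_{\pm 1}$, with $f_{-1} = i\mu_-\tilde f$ and $f_1 = -i\mu_+\tilde f$, which lie in $\mathrm{range}(\mu_-)$ and $\mathrm{range}(\mu_+)$ respectively, hence are orthogonal to $\ker^{\mp 1}\mu_\mp^*$. Moreover $u|_{\partial SM}=0$ (the $\partial_- SM$ part by construction, the $\partial_+ SM$ part by $I_{A,\perp}\tilde f = 0$), and $u$ is even since its odd part satisfies $(X+A)u_- = 0$ with vanishing boundary data. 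Both $(i)$ and $(ii)$ now apply, yielding $u$ simultaneously holomorphic and antiholomorphic, so $u = u_0$. Projecting $(\mu_+ + \mu_-)u_0 = i(\mu_+ - \mu_-)\tilde f$ onto $\Omega_{\pm 1}$ produces $\mu_+(u_0 - i\tilde f) = 0$ and $\mu_-(u_0 + i\tilde f) = 0$ in $\Omega_0\cap C_0^\infty(M)$. Using invertible holomorphic integrating factors exactly as in the proof of Lemma \ref{lemma:solveCR} (which reduces $\mu_\pm h = 0$ to a $\bar\partial$- or $\partial$-equation on a trivialized frame), both $\mu_+$ and $\mu_-$ are injective on that subspace, so $u_0 = i\tilde f = -i\tilde f$, forcing $\tilde f = 0$.

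\textbf{Main obstacle.} The trickiest bookkeeping is in the forward direction: one must verify that the source produced by $(X+A)$ applied to $(Id\mp iH)u - u_0$ is exactly $\mu_\mp$ of a function in $C_0^\infty(M)$, which relies both on the fact that every Fourier mode of $u$ vanishes separately on $\partial M$ and on the orthogonal decomposition of $\Omega_{\pm 1}$ from Lemma \ref{lem:decomp}. Once that is in place, the identification $I_A\circ\mu_\mp = \mp\tfrac{i}{2} I_{A,\perp}$ on $C_0^\infty(M)$ is immediate from the fundamental theorem of calculus along geodesics.
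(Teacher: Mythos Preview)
Your proof is correct and follows essentially the same route as the paper's: the forward direction applies $(Id\mp iH)$ and the commutator $[H,X+A]$, uses Lemma~\ref{lem:decomp} to write $f_{\mp 1}=\mu_\mp w$, and reduces to an $I_{A,\perp}$-problem on $C_0^\infty(M)$; the backward direction is identical, including the final step of invoking injectivity of $\mu_\pm$ on functions vanishing at $\partial M$. The only cosmetic difference is that you work with $v=(Id-iH)u-u_0=2u_{<0}$ directly, whereas the paper carries $(Id-iH)u+v_0$ (your $w$) through the computation---these two bookkeeping choices are equivalent.
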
 

\begin{proof}
    $(\implies)$ Suppose $I_{A,\perp}$ injective. We only prove $(i)$ as $(ii)$ is similar. Let $u,f$ as in the statement with $f_k = 0$ for all $k< -1$ and $f_{-1} \perp \ker^{-1} \mu_-^*$. In particular, from Lemma \ref{lem:decomp}, we can write $f_{-1} = \mu_- v_0$ with $v_0|_{\partial M} = 0$. Then $(Id - iH)f = 2f_{-1} = 2\mu_- v_0$. The function $(Id - iH) u$ solves 
  \begin{align*}
    (X+A) (Id - iH)u &= (Id - iH) (X+A)u  -i [X+A,H] u \\
    &= - 2\mu_- v_0 + i (X_\perp - A_V) u_0 + \cancel{i ((X_\perp - A_V)u)_0}.
  \end{align*}
  With $2\mu_- v_0 = (X+A) v_0 - i (X_\perp -A_V) v_0$, the equation above becomes: 
  \begin{align*}
      (X+A) ((Id - iH)u + v_0) =  i (X_\perp - A_V) (u_0 + v_0). 
  \end{align*}
  Upon integrating along geodesics, we get 
  \begin{align*}
      I_{A,\perp} (i(u_0 + v_0)) = -B_{A,-} ((Id - iH)u + v_0)|_{\partial SM} = 0,
  \end{align*}
  which by injectivity of $I_{A,\perp}$ implies $u_0 + v_0 = 0$. Then the transport equation above becomes 
  \[  (X+A) [(Id - iH)u + v_0] = 0, \qquad  ((Id - iH)u + v_0)|_{\partial SM} = 0,  \]
  which implies $(Id - iH) u = -v_0$, thus $u$ is holomorphic. \\
  \noindent $(\impliedby)$ Suppose $(i),(ii)$ satisfied. Let $h\in C_0^\infty(M)$ such that $I_{A,\perp} h = 0$, then there exists $u:SM\to \Cm^n$ with $u|_{\partial SM} = 0$ and such that $(X+A)u = -(X_\perp - A_V) h$. Then $u$ is even since $(X+A) u_- = 0$ with $u_-|_{\partial SM} = 0$. Then $f = (X_\perp - A_V) h = i (\mu_- h- \mu_+ h)$ satisfies requirements for both $(i)$ and $(ii)$, so that $u$ is both holomorphic and anti-holomorphic. Then $u= u_0$ with $u_0|_{\partial M} = 0$. Then the relation $(X+A)u_0 = - (X_\perp - A_V) h$ implies $\mu_+ (iu_0 + h) = 0$ and $\mu_- (iu_0 - h) = 0$. Since $(iu_0\pm h)|_{ \partial M} = 0$, this implies $iu_0 \pm h = 0$, hence $u_0 = h = 0$, and $I_{A,\perp}$ is injective. Proposition \ref{prop:characIperp} is proved. 
\end{proof}

\begin{proposition} 
    Let $A$ be a $GL(n,\mathbb{C})$ connection. If $I_{A,\perp}$ is injective, then so is $I_{A,0}$.     
    \label{prop:IperpI0}
\end{proposition}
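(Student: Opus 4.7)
The strategy is to verify the characterization of $I_{A,0}$ injectivity from Proposition~\ref{prop:charac3} by using the characterization of $I_{A,\perp}$ injectivity from Proposition~\ref{prop:characIperp}, via a reduction that employs multiplication by non-vanishing holomorphic sections of $\Omega_{\pm 1}$. Assume $I_{A,\perp}$ is injective; I will verify Proposition~\ref{prop:charac3}(i) (the argument for~(ii) is symmetric). Let $u, f \in C^\infty(SM, \mathbb{C}^n)$ with $(X+A)u = -f$, $u|_{\partial SM} = 0$ and $f$ holomorphic and even; the parity split forces $u$ to be odd, and we must show $u$ is also holomorphic. Combined with the symmetric argument giving $u$ antiholomorphic, we would then obtain $u \in H_0$; since $u$ is odd this forces $u = 0$, whence $f = 0$.

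Pick a non-vanishing holomorphic section $q \in \Omega_1$ (which exists by simple connectedness) and set $\eta := q^{-1}Xq$, a smooth scalar one-form. A direct computation using $Xq = q\eta$ yields
\begin{align*}
    (X + A - \eta\,\Imm_n)(qu) = -qf,
\end{align*}
where $v := qu$ is even, $v|_{\partial SM} = 0$, and $qf \in \Omega_1$ is odd. The main auxiliary step is an analog of Proposition~\ref{prop:transl2} for $I_{A,\perp}$: namely, $I_{A,\perp}$ injective implies $I_{A + \omega\Imm_n, \perp}$ injective for every smooth scalar one-form $\omega$. This follows by verifying the characterization of Proposition~\ref{prop:characIperp} for the shifted connection using holomorphic (resp.\ antiholomorphic) integrating factors for $\omega$, together with a careful identification of how $\ker^{\pm 1} \mu_\pm^{(A+\omega)*}$ relates to $\ker^{\pm 1} \mu_\pm^{A*}$ under the conjugation, which allows the orthogonality conditions on $g_{\pm 1}$ to transfer between the two connections.

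With the auxiliary step in hand, Proposition~\ref{prop:characIperp}(i) applies to the pair $(v, qf)$ with the connection $A - \eta\,\Imm_n$: the conditions $(qf)_k = 0$ for $k < -1$ and $(qf)_{-1} = 0 \perp \ker^{-1}\mu_-^{(A-\eta)*}$ are trivially satisfied, so $v = qu$ is holomorphic, i.e.\ $u$ has Fourier modes $\ge -1$. Projecting the original transport equation $(X+A)u = -f$ onto Fourier mode $-2$ gives $\mu_+ u_{-3} + \mu_- u_{-1} = 0$; since $u_{-3} = 0$, this reduces to $\mu_- u_{-1} = 0$. Combined with $u_{-1}|_{\partial M} = 0$ (obtained from $u|_{\partial SM} = 0$ by fiberwise Fourier decomposition) and the uniqueness of $\mu_-$-holomorphic sections on the disk with zero boundary data (cf.\ the trivialization $F$ appearing in the proof of Lemma~\ref{lemma:solveCR}), we obtain $u_{-1} = 0$ and hence $u$ is holomorphic.

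The main obstacle is the auxiliary injectivity-transfer step under scalar shifts of the connection: unlike Proposition~\ref{prop:transl2}, the characterization in Proposition~\ref{prop:characIperp} involves orthogonality against the kernels $\ker^{\pm 1}\mu_\pm^*$, which do not transform as cleanly under integrating-factor conjugation as the holomorphicity conditions in Proposition~\ref{prop:charac3} do. Tracking this transformation---verifying that orthogonality for $A + \omega$ implies orthogonality for $A$ after multiplication by the appropriate mode-zero component of the integrating factor---is the main technical point.
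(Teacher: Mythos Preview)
Your overall plan---multiply by a nonvanishing $q\in\Omega_1$ to shift parity and then invoke the characterization in Proposition~\ref{prop:characIperp}---is exactly the paper's idea. The divergence, and the gap, lies in how you handle the scalar shift $-\eta = -q^{-1}Xq$.

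You reduce to the transport equation $(X+A-\eta\,\Imm_n)(qu) = -qf$ and then want to apply Proposition~\ref{prop:characIperp}(i) for the \emph{shifted} connection $A-\eta\,\Imm_n$. But that proposition is an equivalence whose forward direction requires $I_{A-\eta,\perp}$ injective, which you only know for $A$. You try to bridge this with an auxiliary ``injectivity transfer under scalar shifts'' result for $I_{\cdot,\perp}$, and you correctly flag the obstacle: the orthogonality conditions $f_{\pm 1}\perp \ker^{\pm 1}\mu_\pm^*$ in Proposition~\ref{prop:characIperp} do not transfer cleanly under multiplication by an integrating factor, because the kernels $\ker^{\pm 1}(\mu_\pm^{A+\omega})^*$ and $\ker^{\pm 1}(\mu_\pm^{A})^*$ are related by multiplication by a scalar function that need not respect the $L^2$ pairing in the required way. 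You leave this unresolved, so the argument as written is incomplete. (Note also that establishing the transfer by the route $I_{A,\perp}\Rightarrow I_{A,0}\Rightarrow I_{A+\omega,0}\Rightarrow I_{A+\omega,\perp}$ via Propositions~\ref{prop:IperpI0}, \ref{prop:transl2}, \ref{prop:shift} would be circular here.)

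The paper sidesteps this entirely. Rather than changing the connection, it absorbs the scalar term with a holomorphic even integrating factor: take $e^w$ with $Xe^w - (q^{-1}Xq)e^w = 0$, so that
\[
(X+A)\bigl(qu\,e^{-w}\bigr) = -\,qf\,e^{-w},\qquad (qu\,e^{-w})|_{\partial SM}=0,
\]
and apply Proposition~\ref{prop:characIperp}(i) for the \emph{original} connection $A$. Since $q$, $f$, and $e^{-w}$ are all holomorphic, $qfe^{-w}$ has no Fourier modes below $1$; in particular its $(-1)$-component vanishes and the orthogonality hypothesis is trivially satisfied. One concludes $qu\,e^{-w}$ is holomorphic, hence $u$ has modes $\ge -1$; the symmetric argument with an antiholomorphic factor gives modes $\le 1$, so $u=u_{-1}+u_1$. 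The paper then finishes (working directly with $f\in\Omega_0$ rather than through Proposition~\ref{prop:charac3}) by projecting $(X+A)u=-f$ onto $\Omega_{\pm 2}$ to get $\mu_+ u_1 = \mu_- u_{-1}=0$, whence $u=0$ and $f=0$. Your final step (projecting onto mode $-2$ to kill $u_{-1}$) is the same mechanism.

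In short: drop the auxiliary transfer step and instead conjugate by the holomorphic integrating factor before invoking Proposition~\ref{prop:characIperp}. That single move removes the only genuine difficulty you identified.
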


\begin{proof} Suppose $I_{A,\perp}$ injective so that it satisfies $(i)$ and $(ii)$ in Proposition \ref{prop:characIperp}. Let $f$ such that $I_{A,0}f = 0$. Then there exists $u$ odd such that $(X+A)u = -f$ with $u|_{\partial SM} =0$. With $q$ a non-vanishing section of $\Omega_1$ and $\omega:= -q^{-1} Xq$, this implies
    \[ (X+A + \omega \Imm_n ) (qu) = - qf.  \]
    Let $e^w$ a holomorphic, even function such that $X e^w + \omega e^w = 0$, then the equation above can be rewritten as 
    \[ (X+A) (que^{-w}) = -qfe^{-w}, \qquad que^{-w}|_{\partial SM} = 0,   \]
    where $qfe^{-w}$ is odd and $que^{-w}$ is even. Moreover, $qfe^{-w}$ is holomorphic, thus satisfies the requirement for $(i)$, hence $que^{-w}$ is holomorphic, hence $u = u_{-1} + u_1 + u_3 \dots$. Using a similar argument with $(ii)$, we can then cancel all $u_k$'s for $k\ge 2$. Thus $u = u_{-1} + u_1$. Projecting the equation $(X+A)u = -f$ onto $\Omega_2$ and $\Omega_{-2}$ gives $\mu_+ u_1 = \mu_- u_{-1} = 0$, and since $u_1|_{\partial SM}  = u_{-1} |_{\partial SM} =0$, this implies $u_1 = u_{-1} = 0$, hence $f = 0$.
\end{proof}

We conclude by proving the following result which has independent interest.

\begin{proposition}Suppose there is $u\in\Omega_{k}$ such that $I_{A,k}(u)=0$. Then $u$ has vanishing jet at $\partial M$.
In particular $I_{A,k}$ is injective when restricted to $\text{\rm Ker}\,\mu_{\pm}$.
\label{prop:jet}
\end{proposition}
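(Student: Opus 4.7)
The plan is to exploit the transport equation satisfied by $u_A^u$ and a Taylor expansion in the normal direction at $\partial M$. The hypothesis $I_{A,k}(u)=0$ gives $u_A^u|_{\partial_+(SM)}=0$, while by definition $u_A^u|_{\partial_-(SM)}=0$, so $u_A^u$ vanishes on all of $\partial(SM)$. Moreover the representation
\begin{equation*}
u_A^u(x,v)=\int_{-\tau(x,-v)}^{0}E_A^{-1}(x,v,t)\,u(\varphi_t(x,v))\,dt,
\end{equation*}
valid precisely because $I_{A,k}u=0$, shows $u_A^u\in C^\infty(\overline{SM},\mathbb{C}^n)$: smoothness near $\partial_-(SM)$ is immediate and the above formula gives smoothness near $\partial_+(SM)$.

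Next I would introduce boundary Fermi coordinates near a point $x_0\in\partial M$: a tangential coordinate $s$, the normal distance $r\ge 0$, and the fiber angle $\phi$ measured from $\partial_s$, so that $v=\cos\phi\,\partial_s+\sin\phi\,\partial_r$. In these coordinates $Xr=\sin\phi$, and
\begin{equation*}
X=\sin\phi\,\partial_r + X',\qquad u=h(r,s)\,e^{ik\phi},
\end{equation*}
where $X'$ involves only $\partial_s$ and $\partial_\phi$ (no $\partial_r$). The crucial feature is the glancing locus $\{\sin\phi=0\}\subset\partial(SM)$, where the coefficient in front of the transverse derivative vanishes.

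I would then set up formal Taylor expansions $u_A^u=\sum_{j\ge 1}r^j w_j(s,\phi)$ (legitimate because $u_A^u$ is smooth and vanishes on $\partial(SM)$) together with $h(r,s)=\sum_{j\ge 0}r^j h_j(s)$, and match powers of $r$ in $(X+A)u_A^u=-u$. Inductively, assuming $w_1=\dots=w_{j-1}=0$ and $h_0=\dots=h_{j-2}=0$, the contributions from $X'$ and $A$ applied to $u_A^u$ are $O(r^j)$, while $\sin\phi\,\partial_r u_A^u=j\sin\phi\,r^{j-1}w_j+O(r^j)$. Matching the $r^{j-1}$ coefficient yields
\begin{equation*}
j\sin\phi\cdot w_j(s,\phi)=-h_{j-1}(s)\,e^{ik\phi}.
\end{equation*}
Evaluating at $\phi=0$ forces $h_{j-1}\equiv 0$ on $\partial M$, and then $w_j\equiv 0$ on $\{\sin\phi\ne 0\}$, hence everywhere by continuity. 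This closes the induction and shows every Taylor coefficient of $h$ at $r=0$ vanishes, i.e.\ $u$ has vanishing jet at $\partial M$.

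For the second claim, take $u\in\Omega_k\cap\ker\mu_-$ (the $\mu_+$ case is symmetric). Writing $u=h\,e^{ik\theta}$ in isothermal coordinates, \eqref{eq:mu} recasts $\mu_-u=0$ as $\bar\partial(he^{k\lambda})+A_{\bar z}he^{k\lambda}=0$. Using the smooth trivialization $F:M\to GL(n,\mathbb{C})$ with $\bar\partial F+A_{\bar z}F=0$ from the proof of Lemma \ref{lemma:solveCR}, the quantity $\tilde h:=F^{-1}he^{k\lambda}$ is holomorphic on $M$, smooth up to $\partial M$, and inherits the vanishing jet of $h$ on $\partial M$. Since $M$ is a disk, boundary uniqueness for holomorphic functions (via the Cauchy integral formula with vanishing boundary trace) forces $\tilde h\equiv 0$, hence $u\equiv 0$. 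The main obstacle is the bookkeeping of the inductive step: one must verify that the off-diagonal components $X'$ and $A$ applied to $r^j w_j$ really only produce $O(r^j)$ terms so that the $r^{j-1}$ balance isolates $h_{j-1}$ together with $w_j$, making the glancing evaluation $\phi=0$ decisive without interference from higher-order coefficients.
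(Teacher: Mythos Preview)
Your argument is correct and takes a genuinely different route for the first claim than the paper does. The paper embeds $M$ into a slightly larger simple manifold $M_1$, extends $u$ by zero and $A$ smoothly, and invokes that the normal operator $N=I_{A,k}^*I_{A,k}$ is an elliptic classical pseudodifferential operator of order $-1$ in the interior of $M_1$; from $N\tilde u=0$ elliptic regularity forces the zero extension to be smooth, hence $u$ has vanishing jet at $\partial M$. Your approach bypasses the $\Psi$DO machinery in favour of a direct boundary-layer Taylor matching, which is more elementary and does not rely on simplicity or on extending the manifold. The bookkeeping you flag as the obstacle is in fact fine: since $X'$ contains no $\partial_r$, applying $X'$ or $A$ to $r^j w_j$ produces only $O(r^j)$ contributions, so the $r^{j-1}$ balance genuinely reduces to $j\sin\phi\,w_j(s,\phi)=-h_{j-1}(s)e^{ik\phi}$, and the glancing evaluation $\phi=0$ is decisive. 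For the second claim (injectivity on $\ker\mu_\pm$) your argument coincides with the paper's.

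The one step that deserves a sharper justification is the smoothness $u_A^u\in C^\infty(\overline{SM})$. Your backward integral formula (which is off by a sign, incidentally) and the forward one together cover $\partial_\pm(SM)\setminus\partial_0(SM)$, but at the glancing set $\partial_0(SM)$ both $\tau(x,v)$ and $\tau(x,-v)$ have square-root singularities and neither representation alone establishes smoothness there. This matters, because your induction hinges precisely on evaluating $w_j(s,\phi)$ at $\phi=0$, which lies in the glancing set. The required smoothness is nonetheless a standard lemma (proved e.g.\ by extending $(M,g)$ slightly and comparing with the transport solution on the larger manifold), so this is a point to cite rather than a genuine gap.
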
 

\begin{proof} The main observation is that $N=I_{A,k}^{*} I_{A,k}$ is an elliptic classical $\Psi DO$ of order $-1$ in the interior of any simple manifold engulfing $M$, see \cite[Section 5]{Paternain2013a} and references therein. Hence consider a slightly larger simple manifold $M_{1}$ containing $M$ and extend $u$ by zero to $M_{1}$ ($A$ is extended in any smooth way). Thus $Nu=0$ in the interior of $M_{1}$ and by elliptic regularity we deduce that $u$ is smooth in $M_{1}$. Since $u$ vanishes outside $M$, this clearly imply that $u$ has zero jet at the boundary of $M$.

Suppose in addition $\mu_{-}(u)=0$. If we write $u=he^{ik\theta}$ then using (\ref{eq:mu}) we see that $\bar{\partial}(he^{k\lambda})+A_{\bar{z}}he^{k\lambda}=0$. Using the existence of $F:M\to GL(n,\mathbb{C})$ such that $\bar{\partial} F+A_{\bar{z}}F=0$ as in Lemma \ref{lemma:solveCR} we see that $\bar{\partial}(F^{-1}he^{k\lambda})=0$. Since $h$ vanishes on $\partial M$, this is enough to conclude
that $u=0$. A similar argument applies to elements in the kernel of $\mu_{+}$ (or their adjoints). 
\end{proof}


\section{Range characterization} \label{sec:range}

We start with a standard surjectivity result. 


\begin{theorem}\label{thm:surjective}
    Suppose $I_{A,0}$ is injective. Given $f\in C^{\infty}(M,\Cm^n)$ there exists $h\in {\mathcal S}_{-A^{*}}^{\infty}(\partial_{+}(SM),\Cm^n)$ such that $I_{A,0}^*(h)=f$. 
\end{theorem}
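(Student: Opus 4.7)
By the adjoint formula \eqref{eq:IA0star}, $I_{A,0}^*h = 2\pi\,\pi_0 h_{\psi, \Aadj}$. Hence finding $h \in \mathcal{S}_{-\Aadj}^\infty$ with $I_{A,0}^* h = f$ amounts to constructing $u \in C^\infty(SM,\Cm^n)$ satisfying $(X + \Aadj)u = 0$ on $SM$ and $\pi_0 u = f/(2\pi)$, and then setting $h := u|_{\partial_+(SM)}$. Smoothness of $u$, combined with \eqref{eq:Sinf} and the fact that $h_{\psi, -\Aadj}$ differs from $h_{\psi, \Aadj} = u$ by the smooth matrix factor $U_{-\Aadj}\, U_{\Aadj}^{-1}$, then yields $h \in \mathcal{S}_{-\Aadj}^\infty$ and $I_{A,0}^* h = f$ by construction.

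\textbf{Harmonic construction.} Decompose $u = \sum_{k\in\mathbb{Z}} u_k$ with $u_k \in \Omega_k$, using the Guillemin-Kazhdan splitting $X + \Aadj = \eta_+^{\Aadj} + \eta_-^{\Aadj}$ from Section \ref{sec:notation}. Setting $u_0 := f/(2\pi)$ (identified with $f\circ\pi/(2\pi)$) and $u_{2k+1} := 0$ for all $k \in \mathbb{Z}$, the condition $(X+\Aadj)u = 0$ projected onto $\Omega_m$ reads $\eta_+^{\Aadj} u_{m-1} + \eta_-^{\Aadj} u_{m+1} = 0$, which for $k\ge 1$ yields the recursions
\[
    \eta_-^{\Aadj} u_{k+1} = -\eta_+^{\Aadj} u_{k-1}, \qquad \eta_+^{\Aadj} u_{-(k+1)} = -\eta_-^{\Aadj} u_{-(k-1)}.
\]
Each of these is solvable harmonic by harmonic via Lemma \ref{lemma:solveCR}, with preimages selected through the matrix-valued holomorphic trivialization $F:M \to GL(n,\Cm)$ appearing in Lemma \ref{lemma:solveCR}'s proof.

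\textbf{Smoothness and the role of injectivity.} The principal technical step is to establish convergence of the formal series $\sum_k u_k$ in $C^\infty(SM, \Cm^n)$. Adapting the template of \cite{SaloUhlmann2011,Paternain2013a}, this amounts to constructing smooth fiberwise holomorphic (resp. antiholomorphic) extensions on $SM$ with prescribed zeroth harmonic, for the twisted transport $X + \Aadj$. Such extensions rely on matrix-valued holomorphic integrating factors for $\Aadj$, built from the scalar holomorphic integrating factors whose existence is guaranteed by the injectivity of $I_{A, 0}$ (via the construction preceding Proposition \ref{prop:transl2}) combined with the trivialization $F$. The main obstacle is precisely this smoothness: while each $u_k$ is smooth on $M$ by elliptic regularity of $\eta_\pm^{\Aadj}$, propagating smoothness uniformly through the recursion so as to obtain a genuine (not merely formal) first integral requires the integrating-factor machinery above, and this is the only place where the injectivity hypothesis on $I_{A,0}$ is actually invoked.
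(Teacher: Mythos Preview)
Your approach is genuinely different from the paper's, and it has a real gap.

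\textbf{What the paper does.} The paper does not argue harmonic by harmonic at all. It invokes the normal operator $N = I_{A,0}^* I_{A,0}$, which is an elliptic classical pseudodifferential operator of order $-1$ in the interior of any simple manifold $M_1$ engulfing $M$. Injectivity of $I_{A,0}$ then upgrades the parametrix to an actual inverse on the relevant spaces, so that for any $f\in C^\infty(M,\Cm^n)$ one finds $g$ with $Ng=f$ on $M$; setting $h = I_{A,0} g$ gives $I_{A,0}^* h = f$, and the regularity $h\in\mathcal{S}^\infty_{-A^*}$ follows from the smoothness of $u_A^g$ when $g$ is smooth. This is the route of \cite[Section~5]{Paternain2013a}, and the injectivity hypothesis enters precisely to pass from ellipticity to invertibility of $N$.

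\textbf{The gap in your argument.} Your ``Smoothness and the role of injectivity'' paragraph asserts the existence of \emph{matrix-valued} holomorphic integrating factors for the connection $-A^*$, ``built from the scalar holomorphic integrating factors \dots\ combined with the trivialization $F$''. This is not established anywhere and, for a general $GL(n,\Cm)$-connection on a simple surface, is essentially the content of the open question in the introduction. The scalar integrating factors recalled before Proposition~\ref{prop:transl2} exist thanks to injectivity of the \emph{unattenuated} $I_0$ (not $I_{A,0}$), and they handle only scalar one-forms $\omega$. The trivialization $F$ from Lemma~\ref{lemma:solveCR} solves $\bar\partial F + A_{\bar z}F=0$ on $M$, which is a base equation, not a transport equation on $SM$; it does not produce a fiberwise holomorphic solution of $(X-A^*)R=0$ in $GL(n,\Cm)$. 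Without such an $R$, there is no mechanism to force the formal series $\sum_k u_k$ obtained from the recursion via Lemma~\ref{lemma:solveCR} to converge in $C^\infty(SM,\Cm^n)$. Indeed, the paper's own Lemma~\ref{lemma:sobre} (the matrix analogue of what you are trying to do) is proved \emph{using} Theorem~\ref{thm:surjective}, not the other way around.

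A minor point: in your reformulation, once you have a smooth $u$ with $(X-A^*)u=0$, the restriction $h=u|_{\partial_+(SM)}$ lies directly in $\mathcal{S}^\infty_{-A^*}$ since $h_{\psi,-A^*}=u$; the detour through $h_{\psi,-\Aadj}$ and the factor $U_{-\Aadj}U_{\Aadj}^{-1}$ is unnecessary and muddled.
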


The proof of this result is now well-understood and we omit it. It follows from injectivity of $I_{A,0}$ and the fact that $I^*_{A,0} I_{A,0}$ is an elliptic classical $\Psi DO$ of order $-1$ in the interior of any simple manifold engulfing $M$, see \cite[Section 5]{Paternain2013a} and references therein. \\
From the expression $I_{A,0}^* h = 2\pi (h_{-A^*,\psi})_0$, upon setting $u = 2\pi\ h_{-A^*,\psi}\in C^\infty(SM,\Cm^n)$, Theorem \ref{thm:surjective} is equivalent to stating that for every $f\in C^{\infty}(M,\Cm^n)$, there exists $u\in C^\infty(SM,\Cm^n)$ satisfying $(X-A^*)u = 0$ and $u_0 = f$.

The next result is less standard and it is based on the solvability result given by Lemma \ref{lemma:solveCR} and follows the strategy of the proof of \cite[Theorem 5.5]{Paternain2013a}.

\begin{theorem} Suppose $I_{A,0}$ is injective. Given $f\in C^{\infty}(M,\Cm^n)$ there exists $h\in {\mathcal S}_{-A^{*}}^{\infty}(\partial_{+}(SM),\Cm^n)$ such that
$I_{A,\perp}^*(h)=f$. \label{thm:surjectiveperp}
\end{theorem}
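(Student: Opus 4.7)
The plan is to construct, for each $f \in C^\infty(M, \mathbb{C}^n)$, a smooth first integral $u \in C^\infty(SM, \mathbb{C}^n)$ of $X - A^*$ (so that $u = h_{\psi, -A^*}$ with $h := u|_{\partial_+(SM)} \in \mathcal{S}^\infty_{-A^*}$) whose zeroth Fourier mode of $(X_\perp + (A^*)_V) u$ equals $-f/(2\pi)$; then \eqref{eq:IAperpstar} will yield $I_{A,\perp}^* h = f$. A direct mode-zero calculation using $X_\perp = -i(\eta_+ - \eta_-)$ and $(A^*)_V = i((A^*)_1 - (A^*)_{-1})$ produces
\[
\pi_0\bigl((X_\perp + (A^*)_V) u\bigr) = i\bigl(\mu_-^{-A^*} u_1 - \mu_+^{-A^*} u_{-1}\bigr),
\]
while the mode-zero projection of $(X - A^*) u = 0$ reads $\mu_+^{-A^*} u_{-1} + \mu_-^{-A^*} u_1 = 0$. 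These two equations combine into the paired requirements
\[
\mu_-^{-A^*} u_1 = \tfrac{i}{4\pi} f, \qquad \mu_+^{-A^*} u_{-1} = -\tfrac{i}{4\pi} f.
\]

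First I will invoke Lemma \ref{lemma:solveCR} twice, for the $\mu_-$ and $\mu_+$ equations with connection $-A^*$, to produce candidates $u_1 \in \Omega_1$ and $u_{-1} \in \Omega_{-1}$ satisfying the two displayed equalities. Setting $\tilde u := u_1 + u_{-1}$, the mode-zero cancellation above ensures that $F := (X - A^*)\tilde u \in \Omega_2 \oplus \Omega_{-2}$. Solving the transport problem $(X - A^*) w = -F$ with $w|_{\partial_-(SM)} = 0$ by first-order ODE along geodesics then yields a smooth $w$ on $SM$, so that $\tilde u + w$ is a smooth first integral of $X - A^*$. The catch is that $w$ may carry nonzero $\Omega_{\pm 1}$ components, which perturb the zeroth Fourier mode of $(X_\perp + (A^*)_V)(\tilde u + w)$ by $r := i(\mu_-^{-A^*} w_1 - \mu_+^{-A^*} w_{-1}) \in C^\infty(M, \mathbb{C}^n)$.

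Following the strategy of the proof of \cite[Theorem 5.5]{Paternain2013a}, I will absorb the discrepancy $r$ by subtracting from $\tilde u + w$ a smooth first integral $v$ of $X - A^*$ whose $\Omega_{\pm 1}$ components exactly match $w_{\pm 1}$, so that the final $u := \tilde u + w - v$ is smooth, satisfies $(X - A^*) u = 0$, and realises the prescribed value $-f/(2\pi)$ of $\pi_0((X_\perp + (A^*)_V) u)$. Such a $v$ will be supplied by Theorem \ref{thm:surjective}, which applies because $I_{A, 0}$ is injective by hypothesis (and hence so is $I_{-A^*, 0}$ via Propositions \ref{prop:transl2} and \ref{prop:shift}). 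The main obstacle lies precisely in this last step: a Fourier-by-Fourier construction accomplishes the required cancellation mode by mode but generally fails to assemble into a $C^\infty$ function on $SM$, whereas Theorem \ref{thm:surjective}, routed through the injectivity of $I_{-A^*, 0}$, is what allows us to realise the full cancellation by a single genuinely smooth first integral.
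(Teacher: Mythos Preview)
Your approach has two genuine gaps. The more serious one is the claim that the solution $w$ of $(X-A^*)w=-F$ with $w|_{\partial_-(SM)}=0$ is smooth on $SM$: it is not. The exit time $\tau$ has a square-root singularity at the glancing set $\partial_0(SM)=\partial_+(SM)\cap\partial_-(SM)$ (already on the Euclidean unit disk one has $\tau(x,v)=-\langle x,v\rangle+\sqrt{\langle x,v\rangle^2+1-|x|^2}$), and the integral representation $w=u_{-A^*}^{F}$ inherits this non-smoothness for generic smooth $F$. Hence $\tilde u+w$ is not in $C^\infty(SM)$, and subtracting a smooth $v$ cannot repair this; the boundary trace then fails to lie in $\mathcal S^\infty_{-A^*}$. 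The paper's proof avoids this completely by never solving a one-sided transport problem. Instead it shows (Lemma~\ref{lemma:sobre}) that for each $m$ one can build a \emph{smooth} first integral of $X-A^*$ with prescribed $m$-th Fourier mode, by applying Theorem~\ref{thm:surjective} to the scalarly translated connection $A-ma\,\mathbb I_n$ (legitimate via Proposition~\ref{prop:transl2}) and conjugating by $q^m$. Having produced odd smooth first integrals $p,q$ with $p_{-1}=w_{-1}$, $q_1=w_1$, it then splices them as $\sum_{k\le-1}p_k+\sum_{k\ge1}q_k$, which is smooth because it is obtained from $p,q$ by fiberwise Hilbert transforms, and is a first integral because the only nontrivial mode to check is the zeroth one, which is exactly the compatibility~\eqref{eq:1}.

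The second gap is in step~6: Theorem~\ref{thm:surjective} only produces a smooth first integral with prescribed \emph{zeroth} mode, not one with both $v_1=w_1$ and $v_{-1}=w_{-1}$ prescribed simultaneously. Your parenthetical that Propositions~\ref{prop:transl2} and~\ref{prop:shift} yield injectivity of $I_{-A^*,0}$ is also incorrect for $n\ge2$: those results cover scalar translations $A\mapsto A+\omega\,\mathbb I_n$ and shifts to $I_{A,m}$, not the passage $A\mapsto -A^*$. This second point is in fact repairable by the splicing construction just described (since $w_1,w_{-1}$ do satisfy the mode-$0$ compatibility), but once you implement it you are essentially carrying out the paper's argument, and the smoothness issue above becomes moot.
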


\begin{proof} Consider the purely imaginary 1-form
    \begin{align}
	a:=A_{\xi,g}= -q^{-1} Xq, \qquad\qquad a = -\bar{a}.	
	\label{eq:a}
    \end{align}
    where $q\in\Omega_1$ is nowhere vanishing (e.g. in global isothermal coordinates $q=e^{i\theta}$). Observe that if $u:SM\to\Cm^n$ is any smooth function then
    \begin{equation}
	(X-A^*-ma \Imm_n)u=q^{-m}((X-A^*)(q^m u))
	\label{eq:att-t}
    \end{equation}
    where $m\in \Zm$. First we show the following result which is interesting in its own right:
    
    \begin{lemma}\label{lemma:sobre} Suppose $I_{A,0}$ injective. Given any $f\in\Omega_m$, there exists $w\in C^{\infty}(SM,\Cm^n)$ such that
	\begin{enumerate}
	    \item $(X-A^*)w=0$,
	    \item $w_m=f$.
	\end{enumerate}	
    \end{lemma}
    
    \begin{proof} Since $I_{A,0}$ is injective, by Proposition \ref{prop:transl2}, $I_{A-ma \Imm_n,0}$ is injective (with $a$ defined in \eqref{eq:a}), thus by Theorem \ref{thm:surjective}, there is $u\in C^{\infty}(SM,\Cm^n)$ such that $0 = (X-A^* +m \bar{a} \Imm_n)u= (X-A^* -m a \Imm_n)u$ and $u_0=q^{-m}f$. If we let $w:=q^{m}u$, then clearly
	$w_m=f$ and by (\ref{eq:att-t}) we also have $(X-A^*)w=0$.
    \end{proof}
    
    As before, consider the operators $\mu_{\pm}=\eta_{\pm}^{A}=\eta_{\pm}+A_{\pm 1}$. Clearly, 
    \[  X+A=\mu_{+}+\mu_{-}, \qquad X-A^* =\eta_{+}^{-A^{*}}+\eta_{-}^{-A^{*}}= -\mu_+^* - \mu_-^* \quad\text{and}\quad X_\perp - A_V = \frac{\mu_+ - \mu_-}{i}. \]
    We need the following solvability result which is a direct consequence of Lemma \ref{lemma:solveCR}.

    \begin{lemma}\label{lemma:solvability} Given $f\in C^{\infty}(M,\Cm^n)$ there are $w_1\in\Omega_1$ and $w_{-1}\in\Omega_{-1}$ such that
	\begin{align}
	    &\eta_{+}^{-A^{*}}(w_{-1})+\eta_{-}^{-A^{*}}(w_{1})=0,\label{eq:1}\\
	    &\eta^{-A^{*}}_{+}(w_{-1})-\eta_{-}^{-A^{*}}(w_{1})=f/(2 \pi i) .\label{eq:2}
	\end{align}	
    \end{lemma}
    
    \begin{proof} Obviously the claim is equivalent to showing that there exists $w_1\in\Omega_1$ such that
$\eta_{-}^{-A^{*}}(w_{1})=-f/4\pi i$ and $w_{-1}\in\Omega_{-1}$ such that $\eta_{+}^{-A^{*}}(w_{1})=f/4\pi i$.
This follows directly from Lemma  \ref{lemma:solveCR}.
\end{proof}



    We are now in good shape to complete the proof of Theorem \ref{thm:surjectiveperp}. Given $f\in C^{\infty}(M,\Cm^n)$, we consider the functions $w_{\pm 1}\in\Omega_{\pm 1}$ given by Lemma \ref{lemma:solvability}.
   By Lemma \ref{lemma:sobre} we can find odd functions $p,q\in C^{\infty}(SM,\Cm^n)$ solving the transport equation $(X-A^*)p=(X-A^*)q=0$ and with $p_{-1}=w_{-1}$ and $q_{1}=w_{1}$. Then the smooth function
    \[w:=\sum_{-\infty}^{-1}p_k+\sum_{1}^{\infty}q_k\]
    satisfies $(X-A^*)w=0$ thanks to equation (\ref{eq:1}). Upon defining $h = w|_{\partial_{+}SM}$ so that $w = h_{\psi,-A^*}$, we then obtain that $h$ satsfies
    \begin{align*}
	I_{A,\perp}^* h = -2\pi \pi_0 (X_\perp + A^{*}_V) h_{\psi,-A^*} &= 2\pi i\ \pi_0 (\eta_{+}^{-A^{*}} - \eta_{-}^{-A^{*}}) w\\ &= 2\pi i\ (\eta_{+}^{-A^{*}} (w_{-1}) - \eta_{-}^{-A^{*}} (w_1)) \stackrel{\eqref{eq:2}}{=} f, 
    \end{align*}
    as desired. 
\end{proof}
    

Finally, with the surjectivity Theorems \ref{thm:surjective} and \ref{thm:surjectiveperp}, we are now ready to prove Theorem \ref{thm:rangeCharac}. As explained in the Introduction, define $P_A:{\mathcal S}_A^{\infty}(\partial_{+}(SM),\Cm^n) \to  C^{\infty}(M,\Cm^n)$, as follows
\[  P_A := B_{A,-} H Q_{A,+}.    \]
The operator $P_A$ is a boundary operator which only depends on the scattering relation and the scattering data $C_A$. Upon splitting the Hilbert transform $H$ into its projections onto even and odd harmonics (call them $H_+$ and $H_-$), we obtain the splitting $P_{A} = P_{A,+} + P_{A,-}$, where we have defined $P_{A,\pm}:= B_{A,-} H_\pm Q_{A,+}$.

\begin{proof}[Proof of Theorem \ref{thm:rangeCharac}]
    For $w$ defined on $\partial_+ (SM)$, recall that $Q_{A,+} w = w_{\psi,A}|_{\partial SM}$ and that \\ \hbox{$B_{A,-}(u|_{\partial(SM)}) = I_A( -(X+A)u )$}. Using these considerations and the commutator formulas, we are able to derive
    \begin{align*}
	P_{A,+} w = B_{A,-}H_+ Q_{A,-}w = B_{A,-} (H_+ w_{\psi,A})|_{\partial (SM)} &= I_A (-(X+A) H_+ w_{\psi,A}) \\
	&= I_A (  (H_-(X + A) - (X+A)H_+) w_{\psi,A} ) \\
	&= I_A ( (X_\perp - A_V)\pi_0 w_{\psi,A} ) \\
	&= \frac{1}{2\pi} I_{A,\perp} I^*_{\Aadj,0} w. 
    \end{align*}
    Similarly for $P_{A,-}$,
    \begin{align*}
	P_{A,-} w = B_{A,-}H_- Q_{A,-}w = B_{A,-} (H_- w_{\psi,A})|_{\partial (SM)} &= I_A (-(X+A) H_- w_{\psi,A}) \\
	&= I_A (  (H_+ (X+A) - (X+A) H_-) w_{\psi,A} ) \\
	&= I_A ( \pi_0 (X_\perp - A_V) w_{\psi,A}  ) \\
	&= - \frac{1}{2\pi} I_{A,0} I^*_{\Aadj,\perp} w. 
    \end{align*}
    Since it is assumed that $I_{-A^*,0}$ is injective, by virtue of Theorems \ref{thm:surjective} and \ref{thm:surjectiveperp}, the operators $I_{\Aadj,0}^*,I_{\Aadj,\perp}^*:{\mathcal S}_A^{\infty}(\partial_{+}(SM),\Cm^n) \to  C^{\infty}(M,\Cm^n)$ are surjective. Combining this surjectivity with the two factorizations above, claims $(i)$ and $(ii)$ follow.  
\end{proof}

\begin{remark} {\rm Examining the proof above, we then see that 
    \begin{align*}
	P_A = \frac{1}{2\pi} (I_{A,\perp} I^*_{\Aadj,0} - I_{A,0} I^*_{\Aadj,\perp}).
    \end{align*}
    In addition, on the direct sum 
    \begin{align*}
	{\mathcal S}_A^{\infty}(\partial_{+}(SM),\Cm^n) = \V_{A,+} \oplus \V_{A,-},
    \end{align*}
    since $I^*_{\Aadj,0}$ vanishes on $\V_{A,-}$ and $I^*_{\Aadj,\perp}$ vanishes on $\V_{A,+}$, one realizes that $P_{A,\pm}$ coincides with the restriction $P_A|_{\V_{A,\pm}}$. This is also true since, following previous observations, if $h\in \V_{A,+}$, then $Q_{A,+} h$ is even and if $h\in \V_{A,-}$, then $Q_{A,+}h$ is odd, which justifies the corresponding splitting of the Hilbert transform into odd and even parts in the previous definitions.  }  
\end{remark}

\subsection{Comparison with the range characterization in \cite{Paternain2013a}}
We conclude this section by making a comparison between Theorem \ref{thm:rangeCharac} and the range characterization of $I_{A}$ acting on 1-forms in \cite[Theorem 1.3]{Paternain2013a} when $A$ is skew-hermitian.
The first thing to observe is that due to our sign conventions $P_{A,+}$ is precisely $-P_{+}$ in \cite{Paternain2013a}, so the main difference is the presence of $I_{A}(\mathfrak{H}_{A})$, where
$\mathfrak{H}_{A}$ was introduced in Subsection \ref{subsec:decom}.
The reason why $\mathfrak{H}_{A}$ does not appear in Theorem \ref{thm:rangeCharac} is that we are only considering the range of $I_{A,\perp}$. In fact for a general $GL(n,\mathbb{C})$-connection $A$ we have:

\begin{lemma} Assume $I_{A}$ is solenoidal injective on 1-forms. Then
{\rm \[ \text{range}\;I_{A}= \text{range}\;I_{A,\perp}\oplus I_{A}(\mathfrak{H}_{-A^{*}}).\]}
\end{lemma}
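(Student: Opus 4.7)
I will rely on the alternative decomposition \eqref{eq:decom2} rather than \eqref{eq:decom1}, since it cleanly separates the three pieces appearing on the right-hand side. For any 1-form $\omega$, write
\[ \omega = d_A p + \star d_A f + h, \qquad p\in C^\infty_0(M,\mathbb{C}^n),\ f\in C^\infty(M,\mathbb{C}^n),\ h\in\mathfrak{H}_{-A^*}. \]
Viewed on $SM$, $d_A p$ corresponds to $(X+A)(p\circ\pi)$, so since $p|_{\partial M}=0$ the identity $I_A((X+A)u) = -B_{A,-}(u|_{\partial SM})$ established in Section~\ref{sec:boundaryop} yields $I_A(d_A p) = 0$. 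Also, $\star d_A f$ corresponds on $SM$ to $(X_\perp-A_V)(f\circ\pi)$, so $I_A(\star d_A f) = I_{A,\perp}f$, where $I_{A,\perp}$ is taken in its extension to smooth functions with arbitrary boundary trace. Therefore
\[ I_A\omega = I_{A,\perp}f + I_A h, \]
giving the inclusion $\text{range}\,I_A \subseteq \text{range}\,I_{A,\perp} + I_A(\mathfrak{H}_{-A^*})$. The reverse inclusion is immediate.

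For the directness of the sum, assume $I_{A,\perp}f = I_A h$ with $f\in C^\infty(M,\mathbb{C}^n)$ and $h\in\mathfrak{H}_{-A^*}$. Rewriting this as $I_A(\star d_A f - h)=0$ and invoking solenoidal injectivity on 1-forms, there exists $p\in C^\infty(M,\mathbb{C}^n)$ with $p|_{\partial M}=0$ such that $\star d_A f - h = d_A p$. Rearranging, $h = -d_A p + \star d_A f = D(-p,f) \in \text{range}(D)$. By construction, $\mathfrak{H}_{-A^*}$ is the $L^2$-orthogonal complement of $\text{range}(D)$, so the smooth form $h$ is orthogonal to itself and hence $h\equiv 0$. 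Then also $I_{A,\perp}f = I_A h = 0$, proving $\text{range}\,I_{A,\perp}\cap I_A(\mathfrak{H}_{-A^*}) = \{0\}$.

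\textbf{Main obstacle.} The argument is essentially an immediate consequence of the two inputs already in hand (decomposition \eqref{eq:decom2} and the hypothesis of solenoidal injectivity). The only point requiring care is verifying that the 1-form $\star d_A f$, when pulled back to $SM$, is genuinely the same function as $(X_\perp - A_V)(f\circ\pi)$, so that the extended $I_{A,\perp}$ introduced earlier matches the action of $I_A$ on that summand; once this is observed, no boundary correction arises even though $f$ may fail to vanish on $\partial M$, and the proof proceeds with no further analytic input.
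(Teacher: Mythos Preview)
Your proof is correct and follows essentially the same route as the paper: use decomposition \eqref{eq:decom2} to obtain the splitting of the range, then use solenoidal injectivity to force $h=0$ when $I_A h \in \text{range}\,I_{A,\perp}$. Your argument for $h=0$ (namely $h\in\text{range}(D)\cap\mathfrak{H}_{-A^*}=\{0\}$) spells out what the paper dismisses with ``this implies right away that $h=0$,'' but the idea is identical.
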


\begin{proof} The fact that the range splits follows directly from the decomposition \eqref{eq:decom2} and the definitions. The sum is direct because of the following observation: if $h\in \mathfrak{H}_{-A^{*}}$ is such that $I_{A}(h)\in \text{range}\;I_{A,\perp}$ then $h=0$. Indeed, in this case there is $f\in C^{\infty}(M,\mathbb{C}^{n})$
such that $I_{A}(\star d_{A}f+h)=0$. Since $I_{A}$ is solenoidal injective, we have that there is $p\in C^{\infty}(M,\mathbb{C}^{n})$ with $p|_{\partial M}=0$
such that $d_{A}p=\star d_{A}f+h$. This implies right away that $h=0$.

\end{proof}

We conclude with an example showing that $\mathfrak{H}_{A}$ could be non-trivial. We note that
 $\mathfrak{H}_{A}$ transforms isomorphically under gauge equivalences and it is trivial for $A=0$ (hence it is zero for any flat connection).
For the example, suppose $M$ is the unit disk with the standard metric. Consider the following map $F:\partial M={\mathbb S}^{1}\to SU(2)$ given by
\begin{equation}
F(e^{i\phi})=\left[ \begin{matrix} e^{-2i\phi} & 0 \\ 0 & e^{2i\phi} \end{matrix} \right].
\label{eq:boundarycon}
\end{equation}
Since $SU(2)$ is simply connected $F$ can be extended to a smooth map $F:M\to SU(2)$.
Define the $GL(2,\mathbb{C})$-connection $A:=-(\bar{\partial} F)F^{-1}d\bar{z}=A_{\bar{z}}d\bar{z}$.
Thus
\begin{equation}
\bar{\partial}F+A_{\bar{z}}F=0,\label{eq:bar}.
\end{equation}
We claim that there is a non-zero 1-form $h$ such that $d_{A}h=d_{A}\star h=0$ and $j^* h=0$. Indeed, let $h:=h_{\bar{z}}d\bar{z}+h_{z}dz$ where
\begin{align*}
    h_{\bar{z}}(x,y):=\left[ \begin{matrix} 1 \\ 0 \end{matrix} \right], \qquad 
    h_{z}(x,y):=F(x,y)\left[ \begin{matrix} 1 \\ 0 \end{matrix} \right].    
\end{align*}
Using \cite[Lemma 6.2]{Paternain2013a} (which holds for all $A$, not just unitary ones) we see that $\star d_{A}h=2i(\mu_{-}(h_{1})-\mu_{+}(h_{-1}))$ and thus
$d_{A}h=0$ and $d_{A}\star h=0$ are equivalent to $\mu_{-}(h_{1})=\mu_{+}(h_{-1})=0$.
These equations hold because of (\ref{eq:bar}) and $A_{z}=0$. Finally the boundary condition $j^*h=0$ holds because
using (\ref{eq:boundarycon}):
\[h(ie^{i\phi})=\left[ \begin{matrix} 1 \\ 0 \end{matrix} \right](-i)e^{-i\phi}+F(e^{i\phi})\left[ \begin{matrix} 1 \\ 0 \end{matrix} \right]ie^{i\phi}=0.\]

It is now natural to ask: is there a way to characterize the finite dimensional subspace $I_{A}(\mathfrak{H}_{-A^{*}})$ in terms of boundary data?





\section*{Acknowledgements.} The authors thank the referees for their constructive and
useful comments. FM was partially funded by NSF grant DMS-1712790 and GPP by EPSRC grant EP/M023842/1.

\appendix

\section{On the simplicity constants $C_1$, $C_2$}

Recalling that we introduced constants $C_{1,2}(M,g)$ in \eqref{eq:simpleclaim} for simple surfaces, let us mention that these constants cannot be made universal. Indeed, consider a centered disk of radius $1-\varepsilon$ in $\Rm^2$ with a constant curvature metric $+1$ or $-1$. For every $\varepsilon>0$, these surfaces are simple, though in the first case, $C_1$ tends to zero as $\varepsilon\to 0$ and in the second case, $C_2$ grows unboundedly as $\varepsilon\to 0$. It is unclear how to sharply characterize simplicity via these constants in terms of intrinsic quantities of the surface. A sufficient way to achieve this can be done as follows. For any $s\in \Rm$, let us denote $s^{\pm} = \max (\pm s, 0)$, and for a non-trapping surface $(M,g)$, let us define
\begin{align*}
  k^{\pm} (M,g) = \sup_{(\x,v)\in \partial_+ (SM)} \int_0^{\tau_+(\x,v)} t \kappa^{\pm} (\varphi_t(\x,v))\ dt. 
\end{align*}
The constant $k^+(M,g)$ appears in \cite[pp119-120]{Sharafudtinov1994}, chosen to be a dimensionless quantity (in that it does not vary under multiplication of $g$ by a positive number) and the condition $k^+(M,g)<1$ is a sufficient (but not necessary) condition for the absence of conjugate points. We have the following bounds on the function $b$ in terms of $k^{\pm}$.
\begin{lemma} \label{lem:boundsb2}
  Let $(M,g)$ a non-trapping Riemannian surface such that $k^+(M,g)<1$. Then the function $b_2$ satisfies
  \begin{align*}
    1-k^+ (M,g) \le \frac{|b_2(\x,v,t)|}{t} \le \exp (k^- (M,g)), \qquad (\x,v)\in SM, \qquad t\in [0,\tau(\x,v)].
  \end{align*}
\end{lemma}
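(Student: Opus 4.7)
The plan is to work with $b(t) := -b_2(t)$, which satisfies the Jacobi equation $\ddot b + \kappa(\gamma(t)) b = 0$ along $\gamma = \gamma_{(\x,v)}$ with $b(0)=0$, $\dot b(0)=1$, so that $|b_2| = |b|$ and the claim becomes $1 - k^+(M,g) \le |b(t)|/t \le \exp(k^-(M,g))$. Integrating the ODE twice yields the Volterra form
\begin{equation*}
    b(t) = t - \int_0^t (t-s)\, \kappa(\gamma(s))\, b(s)\, ds, \qquad t\in [0, \tau(\x,v)].
\end{equation*}
I will prove the lower bound first (which also yields $b > 0$ on $(0, \tau(\x,v)]$) and then the upper bound via Gronwall.

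For the lower bound, the naive approach of feeding an upper bound on $b(s)$ into the Volterra form only yields $b/t \ge 1 - k^+ \exp(k^-)$, which is weaker than the claim; the sharp constant requires a Sturm-type comparison with the companion Jacobi field $r$ solving $\ddot r + \kappa^+(\gamma(t)) r = 0$, $r(0)=0$, $\dot r(0) = 1$. A bootstrap on any interval where $r\ge 0$ uses concavity $\ddot r \le 0$ to force $r(t) \le t$, and then feeding this into the integral form of $r$ gives
\begin{equation*}
    r(t) \ge t - \int_0^t (t-s)\, s\, \kappa^+(\gamma(s))\, ds \ge t\Bigl(1 - \int_0^t s\, \kappa^+(\gamma(s))\, ds\Bigr) \ge t(1 - k^+(M,g)) > 0,
\end{equation*}
so positivity of $r$ together with the estimate $r \ge t(1-k^+)$ propagates to all of $(0, \tau(\x,v)]$. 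The Wronskian $W := b \dot r - r \dot b$ satisfies $W(0) = 0$ and $\dot W = (\kappa - \kappa^+) r b = -\kappa^-(\gamma) r b$, which is $\le 0$ as long as $b, r \ge 0$; on any such subinterval starting at $0$, this gives $(b/r)' \ge 0$ with limit $1$ at $0^+$, hence $b \ge r > 0$. A standard IVP-uniqueness argument (if $b(t_0)=0$ is the first zero with $b>0$ before, then $W(t_0)\le 0$ forces $\dot b(t_0) \ge 0$ while monotonicity forces $\dot b(t_0) \le 0$, so both $b(t_0)$ and $\dot b(t_0)$ vanish, yielding $b \equiv 0$) propagates $b>0$ to all of $(0, \tau(\x,v)]$, and $|b|/t = b/t \ge r/t \ge 1 - k^+$ follows.

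For the upper bound, with $b > 0$ in hand, splitting $\kappa = \kappa^+ - \kappa^-$ in the Volterra form and discarding the nonnegative $\kappa^+$-contribution yields $b(t) \le t + \int_0^t (t-s)\kappa^-(\gamma(s))b(s)\, ds$. Dividing by $t$ and using $(t-s)/t \le 1$ gives
\begin{equation*}
    \phi(t) \le 1 + \int_0^t s\, \kappa^-(\gamma(s))\, \phi(s)\, ds, \qquad \phi := b/t,
\end{equation*}
and Gronwall's inequality yields $\phi(t) \le \exp\bigl(\int_0^t s\kappa^-(\gamma(s))\, ds\bigr) \le \exp(k^-(M,g))$. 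The main obstacle is the sharp lower bound: the Wronskian comparison with the $\kappa^+$-Jacobi field $r$ is essential to decouple the curvature-sign contributions, and because $b$ is not assumed positive a priori, this comparison must simultaneously establish the sign of $b$ and its quantitative lower estimate.
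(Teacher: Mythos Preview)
Your proof is correct and follows essentially the same approach as the paper: both use the Volterra/integral form of the Jacobi equation, a Sturm-type comparison with the $\kappa^+$-Jacobi field (your $r$ is the paper's $\underline{b}$) to get the lower bound $r/t \ge 1-k^+$, and a Gronwall argument on $b/t$ for the upper bound. The only cosmetic differences are that the paper cites Sturm comparison directly (rather than redoing it via the Wronskian) and introduces a second comparison field $\overline{b}$ for the upper bound before applying Gronwall, whereas you apply Gronwall straight to $b$ after using the already-established positivity; your concavity shortcut $r\le t$ replaces the paper's second iteration of the integral equation but yields the identical estimate.
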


\begin{proof} Denote for brevity $b(\x,v,t) := -b_2(\x,v,t)$, the unique solution to 
    \begin{align*}
	\ddot b + \kappa(\varphi_t) b = 0, \qquad b (0) = 0, \qquad \dot b (0) = 1,
    \end{align*}
    We have for every $\x\in M$, the estimate $-\kappa^-(\x)\le \kappa(\x) \le \kappa^+(\x)$. Therefore, by Sturm comparison, the pointwise estimate holds
    \begin{align*}
	\underline{b}(\x,v,t)\le b(\x,v,t) \le \overline{b}(\x,v,t), \qquad (\x,v)\in SM, \qquad t\in [0,\tau(\x,v)],
    \end{align*}
    where $\underline{b}$ and $\overline{b}$ solve the following problems:
    \begin{align*}
	\ddot{\underline{b}} + \kappa^+ (\varphi_t(\x,v)) \underline{b} = 0, \qquad \underline{b}(0) = 0, \qquad \underline{\dot b}(0) = 1, \\
	\ddot{\overline{b}} - \kappa^- (\varphi_t(\x,v)) \overline{b} = 0, \qquad \overline{b}(0) = 0, \qquad \overline{\dot b}(0) = 1.
    \end{align*}
    $\underline{b}$ has a simple zero at $t=0$ so we can consider the smooth function $\frac{\underline{b}(\x,v,t)}{t}$, and since $k^+(M,g)<1$, $\underline{b}$ remains positive for $t>0$. Direct integration of the previous ODE gives us that 
    \begin{align*}
	\frac{\underline{b}(t)}{t} &= 1 - \int_0^t \frac{t-u}{t} u \kappa^+(\varphi_u) \frac{\underline{b}(u)}{u}\ du \\
	&= 1 - \int_0^t \frac{t-u}{t} u \kappa^+(\varphi_u)\ du + \int_0^t \frac{t-u}{t} u \kappa^+(\varphi_u) \int_0^u \frac{u-v}{u} v \kappa^+ (\varphi_v) \frac{\underline{b}(v)}{v}\ dv\ du \\
	&\ge 1 - \int_0^t u \kappa^+(\varphi_u)\ du = 1- k^+(M,g). 
    \end{align*}
    On to the upper bound, we integrate the ODE satisfied by $\overline{b}$ in a similar fashion, and obtain
    \begin{align*}
	\frac{\overline{b}(t)}{t} &= 1 + \int_0^t \frac{t-u}{t} u \kappa^-(\varphi_u) \frac{\overline{b}(u)}{u}\ du \le 1 + \int_0^t u\kappa^-(\varphi_u) \frac{\overline{b}(u)}{u}\ du, 
    \end{align*}
    so that, by Gr\"onwall's inequality, we obtain $\frac{\overline{b}(t)}{t} \le \exp\left( \int_0^t u \kappa^-(\varphi_u)\ du \right)$. Thus we can bound uniformly 
    \begin{align*}
	\frac{b(\x,v,t)}{t} \le \frac{\overline{b}(\x,v,t)}{t} \le \exp\left( k^-(M,g) \right).
    \end{align*}
    The proof is complete. 
\end{proof}


We now prove Lemma \ref{lem:Wkernelbound}, used in the proof of Theorem \ref{thm:estimate}.

\begin{proof}[Proof of Lemma \ref{lem:Wkernelbound}]
    Let us define $\Phi(\x,v,t) = \left[ \begin{smallmatrix} b_1 & b_2 \\ c_1 & c_2 \end{smallmatrix} \right] (\x,v,t)$. This function satisfies 
    \begin{align*}
	\frac{d}{dt} \Phi + \Km (\varphi_t(\x,v)) \Phi = 0, \qquad \Phi(\x,v,0) = \Imm_2,
    \end{align*}
    where we have defined $\Km := \left[ \begin{smallmatrix} 0 & 1 \\ -\kappa & 0 \end{smallmatrix} \right]$. By Wronskian constancy, since $\Km$ is traceless then $\det \Phi (\x,v,t) \equiv 1$. The function $V\Phi$ solves the ODE
    \begin{align*}
	\frac{d}{dt} (V\Phi) + \Km(\varphi_t) V\Phi = - (V(\Km\circ \varphi_t)) \Phi, \qquad V\Phi(\x,v,0) = 0, 
    \end{align*}
    where we have $- V(\Km \circ \varphi_t) = b_2(\x,v,t)\kappa_\perp(\varphi_t(\x,v)) \left[ \begin{smallmatrix} 0 & 0 \\ 1 & 0 \end{smallmatrix} \right]$ with $\kappa_\perp = X_\perp \kappa$. Using $\Phi$ itself as an integrating factor of the latter equation, we can arrive at the integral formula (keeping $(\x,v)$ implicit)
    \begin{align*}
	V\Phi(\x,v,t) = \Phi(t) \int_0^t b_2(s) \kappa_\perp(\varphi_s) \Phi^{-1} (s) \left[ \begin{smallmatrix} 0 & 0 \\ 1 & 0 \end{smallmatrix} \right] \Phi(s)\ ds.
    \end{align*}
    We compute $\Phi^{-1} (s) \left[ \begin{smallmatrix} 0 & 0 \\ 1 & 0 \end{smallmatrix} \right] \Phi(s) = \left[ \begin{smallmatrix} -b_2 b_1 & -b_2^2 \\ b_1^2 & b_1b_2 \end{smallmatrix} \right](s)$, and picking particular entries of $V\Phi$, we deduce the expressions
    \begin{align*}
	Vb_1(t) &= V\Phi_{11}(t) = \int_0^t b_2(s) b_1(s) (b_2(t) b_1(s)-b_1(t)b_2(s)) \kappa_\perp (\varphi_s)\ ds, \\
	Vb_2(t) &= V\Phi_{12}(t) = \int_0^t b_2^2(s) (b_2(t) b_1(s)-b_1(t)b_2(s)) \kappa_\perp (\varphi_s)\ ds.
    \end{align*}
    In particular, we get 
    \begin{align*}
	V\left( \frac{1}{b_2} \right)(t) &= \frac{-Vb_2}{b_2^2}(t) = \frac{-1}{b_2^2(t)} \int_0^t b_2^2(s) (b_2(t) b_1(s)-b_1(t)b_2(s)) \kappa_\perp (\varphi_s)\ ds, \\
	V\left( \frac{b_1}{b_2} \right)(t) &= \frac{b_2 Vb_1 - b_1 Vb_2}{b_2^2}(t) = \frac{1}{b_2^2(t)} \int_0^t b_2(s) (b_2(t) b_1(s)-b_1(t)b_2(s))^2 \kappa_\perp (\varphi_s)\ ds.
    \end{align*}
    
    Notice further the following cocycle property: 
    \begin{align}
	\Phi(\x,v,t) = \Phi(\varphi_s(\x,v), t-s) \Phi(\x,v,s), \quad t\ge s,
	\label{eq:cocyclePhi}
    \end{align}
    true since, as functions of $t$ both sides satisfy the ODE $\frac{d}{dt} U + \Km (\varphi_{t}) U = 0$ with matching condition at $t = s$. This equality can be recasted as
    \begin{align*}
	\Phi(\varphi_s(\x,v), t-s) = \Phi(\x,v,t)\Phi^{-1} (\x,v,s). 	
    \end{align*}
    In particular, looking at the ${}_{(1,2)}$ entry in this matrix equality, we obtain the relation
    \begin{align}
	b_2(\varphi_s, t-s) = b_2(t) b_1(s) - b_1(t) b_2(s). 
	\label{eq:cocycle}
    \end{align}
    
    In particular, when $(M,g)$ is simple with constants $C_1, C_2$ as in \eqref{eq:simpleclaim}, we can deduce the following estimate
    \begin{align*}
	\left| V\left( \frac{1}{b_2} \right)(t) \right| = \frac{1}{b_2^2(t)}\left| \int_0^t b_2^2(s) b_2(\varphi_s, t-s)\kappa_\perp (s)\ ds \right|\le \frac{\|d\kappa\|_\infty}{C_1^2 t^2} C_2^3 \int_0^t s^2(t-s)\ ds = \frac{\|d\kappa\|_\infty C_2^3 t^2}{12 C_1^2}.
    \end{align*}
    We then obtain the exact same estimate for $V(b_1/b_2)$:  
    \begin{align*}
	\left| V\left( \frac{b_1}{b_2} \right)(t) \right| = \frac{1}{b_2^2(t)}\left| \int_0^t b_2(s) b_2^2(\varphi_s, t-s)\kappa_\perp (s)\ ds \right|\le \frac{\|d\kappa\|_\infty}{C_1^2 t^2} C_2^3 \int_0^t s(t-s)^2\ ds = \frac{\|d\kappa\|_\infty C_2^3 t^2}{12 C_1^2}.
    \end{align*}
    The lemma is proved.
\end{proof}

As a corollary of Lemma \ref{lem:Wkernelbound}, we can revisit the case without connection here. Indeed, the function $w(\x,v,t) = - V\left( \frac{b_1}{b_2} \right)(t)$ is the kernel of the error operator in the case without connection, so we record the operator estimate here. We recall that 
\begin{align*}
    Wf(\x) &= \frac{1}{2\pi} \int_{S_\x} \int_0^{\tau(\x,v)} \frac{w(\x,v,t)}{b_2(\x,v,t)} f(\varphi_t(\x,v))\ b_2(\x,v,t)\ dt\ dS(v) = \int_M {\cal W}(\x,y) f(\y)\ dM_\y,  
\end{align*}
where we have defined ${\cal W} (\x,\y) := \frac{1}{2\pi} \frac{w(\x,Exp_\x^{-1} (\y))}{b_2(\x, Exp_\x^{-1} (\y))}$. Then we write
\begin{align*}
    \|W\|_{L^2\to L^2}^2 &= \int_M \int_M {\cal W}(\x,\y)^2\ dM_\x\ dM_\y \\
    &= \frac{1}{4\pi^2} \int_M \int_{S_\x} \int_0^{\tau(\x,v)} \frac{w^2(\x,v,t)}{b_2(\x,v,t)}\ dt\ dS(v)\ dM_\x \\
    &\le \frac{1}{4\pi^2} \int_M \int_{S_\x} \int_0^{\tau(\x,v)} \left( \frac{\|d\kappa\|_\infty C_2^3 t^2}{12 C_1^2} \right)^2 \frac{1}{C_1 t}\ dt\ dS(v) \\
    &\le \frac{\Vol(M)}{2\pi} \|d\kappa\|_\infty^2 \frac{C_2^6}{144 C_1^5} \frac{\tau_\infty^4}{4}. 
\end{align*}

As a result, we obtain the bound: 
\begin{align*}
    \|W\|_{L^2\to L^2} \le \|d\kappa\|_\infty \frac{C_2^3 \tau_\infty^2}{24 C_1^{\frac{5}{2}}} \left( \frac{\Vol(M)}{2\pi} \right)^{\frac{1}{2}}.  
\end{align*}

\section{Proof of Lemma \ref{lem:tau}} 

\begin{proof}[Proof of Lemma \ref{lem:tau}] Let us define the function
    \[ G(\x,v) := b_2(\x,v,\tau(\x,v)) X_\perp \tau(\x,v) - b_1(\x,v,\tau(\x,v)) V\tau(\x,v). \]
    We will show that $G$ vanishes identically by proving that $G|_{\partial_- (SM)} =0 $ and $XG = 0$ on $SM$. Recall that $\tau$ satisfies 
    \begin{align}
	X\tau = -1 \quad (SM) , \qquad \tau|_{\partial_- (SM)} = 0.
	\label{eq:tau}
    \end{align}
    Using the structure equations, this implies 
    \begin{align*}
	X (X_\perp \tau) = [X,X_\perp] \tau = -\kappa V\tau, \qquad X(V\tau) = [X,V]\tau = X_\perp \tau. 
    \end{align*}
    From \eqref{eq:tau}, we deduce $V\tau|_{\partial_- (SM)} = 0$ and for $(\x,v)\in \partial_- (SM)$, $b_2(\x,v,\tau) = b_2(\x,v,0) = 0$, therefore $G|_{\partial_- (SM)} = 0$. On to showing $XG = 0$, we recall the definition $\Phi(\x,v,t) = \left[ \begin{smallmatrix} b_1 & b_2 \\ c_1 & c_2 \end{smallmatrix} \right]$ and set $\Psi(\x,v) := \Phi (\x,v,\tau(\x,v))$. Using \eqref{eq:cocyclePhi}, we have the property 
    \[ \Phi(\x,v,\tau(\x,v)) = \Phi(\varphi_t(\x,v), \tau(\x,v)-t) \Phi(\x,v,t), \]
    where $\tau(\x,v)-t = \tau(\varphi_t(\x,v))$. Differentiating $\frac{d}{dt}|_{t=0}$, we then obtain
    \begin{align*}
	0 = (X\Psi)(\x,v) - \Psi(\x,v) \Km(\x,v), 
    \end{align*} 
    which upon looking at the entries ${}_{(1,1)}$ and ${}_{(1,2)}$ gives
    \[ X(b_1(\x,v,\tau(\x,v))) = -\kappa(x) b_2(\x,v,\tau(\x,v)), \qquad X(b_2(\x,v,\tau(\x,v))) = b_1(\x,v,\tau(\x,v)).      \]
    In short, we obtain the matrix equation 
    \begin{align*}
	X \left[
	    \begin{array}{cc}
		X_\perp \tau & b_1(\x,v,\tau) \\ V\tau & b_2 (\x,v,\tau)
	    \end{array}
	\right] +\left[
	    \begin{array}{cc}
		0 & \kappa(\x) \\ -1 & 0
	    \end{array}
	\right] \left[
	    \begin{array}{cc}
		X_\perp \tau & b_1(\x,v,\tau) \\ V\tau & b_2 (\x,v,\tau)
	    \end{array}
	\right] = 0, \qquad (\x,v) \in SM,
    \end{align*}
    out of which $XG =0$ is just Liouville's formula.     
\end{proof}


\end{document}